\numberwithin{equation}{section}
\def\be{{\beta}}
\def\al{\alpha}
\newtheorem{theorem}{Theorem}[section]
\newtheorem{lemma}[theorem]{Lemma}
\newtheorem{proposition}[theorem]{Proposition}
\newtheorem{remark}[theorem]{Remark}
\begin{document}

\title{The Euler--Poisson system in $2D$: global stability of the constant equilibrium solution}

\author{Alexandru D. Ionescu}
\address{Princeton University}
\email{aionescu@math.princeton.edu}

\author{Benoit Pausader}
\address{Courant Institute}
\email{pausader@cims.nyu.edu}

\thanks{The first author was partially supported by a Packard Fellowship and NSF grant DMS-1065710. The second author was partially supported by NSF grant DMS-1142293.}

\begin{abstract}

We consider the (repulsive) Euler-Poisson system for the electrons in two dimensions and prove that small smooth perturbations of a constant background exist for all time and remain smooth (never develop shocks). This extends to $2D$ the work of Guo \cite{Guo}.
\end{abstract}
\maketitle
\tableofcontents

\section{Introduction}\label{intro}

In this paper we investigate the question of global existence for small perturbations of a constant background for the 
following {\it Euler--Poisson} system for the electrons in $2$ dimensions:
\begin{equation}\label{EP}
\begin{split}
\partial _{t}n_{-}+\nabla \cdot \left( n_{-}v\,_{-}\right) & =0 \\
n_{-}m_{-}(\partial _{t}v_{-}+v_{-}\cdot \nabla v_{-})+\nabla p(n_{-})&
=en_{-}\nabla \phi  \\
\Delta \phi & =4\pi e(n_{-}-n_{0}).
\end{split}
\end{equation}
Here the unknowns are $n_-\ge 0$, the density of electrons, and $v_-\in\mathbb{R}^2$, the velocity field of the electrons. These are functions defined for $(t,x)\in\mathbb{R}\times\mathbb{R}^2$. 
The positive constants $m_-$, $e$ and $n_0$ correspond respectively to the mass of an electron, its charge and the average charge of an ion background. Finally, $p=p(n_-)$ is a pressure function, given by a constitutive relation which for simplicity we assume to be quadratic.

These equations model the behavior of a fluid of electrons in a warm adiabatic fully ionized plasma when the magnetic field and the motion of the ions is neglected. Neglecting the magnetic field is customary and reduces the number of unknowns. Neglecting the ion motion is relevant since the ratio of the masses of the electrons and the ions is typically very small\footnote{It is no bigger than the ratio of the electron mass to the proton mass which equals $1/1836$.}.
We refer to \cite{Bit} for more on the physical background.

Equations \eqref{EP} represent a coupling of a compressible (inviscid) fluid with an electrostatic field. 
For the pure compressible Euler equation, even small and smooth initial perturbations of a constant equilibrium can lead to formation of shocks in finite time \cite{Sid}.

However, here we show that the coupling with the self-consistent electric field stabilizes the system in the sense that small smooth perturbations of a constant background remain global and return to equilibrium. 
This phenomenon was first observed in $3D$ by Guo \cite{Guo}. We also refer to Guo-Pausader \cite{GuoPau} for a similar result for the ion equation and to the recent work of Germain-Masmoudi \cite{GerMas} for the complete Euler-Maxwell equation for the electrons in $3D$. 
On the other hand, large perturbations can lead to blow-up in finite time for the Euler-Poisson equation \cite{GuoTad}.

Previous work on the Euler-Poisson system in $2D$ also includes the work of Jang \cite{Juhi} and Wei-Tadmore-Bae \cite{WTB} for radial data, and Jang-Li-Zhang \cite{JangLiZhang} for the existence of wave operators.

\medskip

For simplicity, we assume that the pressure law is quadratic $p(n_-)=T_-(n_-)^2/2$. The purpose of this is only to minimize the number of terms in the nonlinearity, but other powers could be treated similarly.
After rescaling, we can then reduce to the following system\footnote{We can even reduce further to $a=b=1$; however, we have prefered to keep the constants $a$ and $b$ for their own physical interest.}
\begin{equation}\label{EP1}
\begin{split}
\partial _{t}n+\nabla \cdot \left( nv\right)&=0,\\
\partial _{t}v+v\cdot \nabla v+a\nabla n&=b\nabla \phi,\\
\Delta \phi&=n-1,
\end{split}
\end{equation}
where $a,b\in(0,\infty)$, $a=T_-n_0/m_-$ is the square of the speed of sound and $b=4\pi e^2n_0/m_-$ is the electron plasma frequency. Our main Theorem asserts that small, neutral irrotational perturbations of a constant equilibrium $(n,v)=(1,0)$ are global.

\begin{theorem}\label{MainThm}
There exists $\varepsilon_0>0$ and a norm $\Vert \cdot\Vert_Y$ such that any initial data $(n(x,0),v(x,0))$ satisfying
\begin{equation*}
\Vert (n(\cdot,0)-1,v(\cdot,0))\Vert_Y\le\varepsilon_0,\quad \hbox{curl}_x(v(\cdot,0))=0
\end{equation*}
leads to a global solution $(n,v)$ of \eqref{EP1} which returns to equilibrium in the sense that
\begin{equation*}
\Vert n(\cdot,t)-1\Vert_{L^\infty}+\Vert\nabla v(\cdot,t)\Vert_{L^\infty}\lesssim(1+t)^{-1}.
\end{equation*}
\end{theorem}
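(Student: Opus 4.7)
\medskip
\noindent\textbf{Proof proposal.} Since the velocity $v$ stays irrotational, I write $v = \nabla\psi$. The linearization of \eqref{EP1} around $(n,v)=(1,0)$, after eliminating $\psi$ via $\partial_t\psi = -a(n-1) - b\Delta^{-1}(n-1)$, reduces to a single Klein--Gordon equation
\begin{equation*}
\partial_t^2(n-1) - a\Delta(n-1) + b(n-1) = \mathcal{Q}(n,\nabla\psi),
\end{equation*}
with quadratic nonlinearity $\mathcal{Q}$ coming from $\nabla\cdot(\rho v)$ and $v\cdot\nabla v$. The dispersion relation is $\omega(\xi) = \sqrt{a|\xi|^2 + b}$, which is a genuine Klein--Gordon symbol (mass $\sqrt b$ generated by the electric coupling) at low frequencies and a wave symbol (sound speed $\sqrt a$) at high frequencies. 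The plan is to diagonalize into a complex unknown $u$ satisfying $(i\partial_t - \Omega)u = \mathcal{N}(u,\bar u)$ with $\Omega = \sqrt{-a\Delta+b}$, where the quadratic nonlinearity $\mathcal{N}$ inherits the null-type cancellation implicit in the curl-free structure.

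Because the free 2D Klein--Gordon decay is only $t^{-1}$, the quadratic nonlinearity is long-range. I would absorb it by a Shatah-type normal form $u \mapsto u + B(u,u) + B(u,\bar u)$. The $+++$ phase $\omega(\xi) + \omega(\eta) + \omega(\xi-\eta) \geq 3\sqrt{b}$ is uniformly nonzero, so the corresponding bilinear form is non-resonant and can be eliminated completely; the $+--$ phase $\omega(\xi) - \omega(\eta) - \omega(\xi-\eta)$ vanishes on a smooth hypersurface away from the origin, and one either removes it on the non-resonant part or renormalizes on the resonant part. After the transformation, one has an effectively cubic equation. I then run a bootstrap in two norms: an energy norm $\|u\|_{H^N}$ for $N$ large, and a dispersive $Z$-norm on the profile $f(t) := e^{it\Omega}u(t)$ adapted to give the sharp linear dispersive bound $\|e^{-it\Omega}f\|_{L^\infty} \lesssim (1+t)^{-1}\|f\|_Z$. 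The energy estimate then yields $\tfrac{d}{dt}\|u\|_{H^N}^2 \lesssim \|u\|_{L^\infty}^2 \|u\|_{H^N}^2 \lesssim (1+t)^{-2}\|u\|_{H^N}^2$, which is integrable. Propagating the $Z$-norm is done on the profile equation
\begin{equation*}
\partial_t \widehat{f}(\xi) = \sum_{\pm}\int e^{it\Phi_\pm(\xi,\eta)} m_\pm(\xi,\eta)\,\widehat f(\eta)\,\widehat{f^{(\pm)}}(\xi-\eta)\,d\eta + \{\text{cubic}\},
\end{equation*}
using integration by parts in $t$ where $\Phi_\pm \neq 0$ and in $\eta$ where $\nabla_\eta\Phi_\pm \neq 0$, restricting the analysis to a shrinking neighborhood of the space-time resonant set $\{\Phi_\pm = 0,\ \nabla_\eta\Phi_\pm = 0\}$.

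The hardest step will be the space-time resonance analysis for the residual cubic terms. Two features make this more delicate than both pure 2D Klein--Gordon and the 3D Euler--Poisson treated by Guo. First, $\omega(\xi)$ interpolates between a flat (mass-gap) regime and a conic (wave) regime, so the geometry of the resonant set and the order of degeneracy of $\nabla_\eta\Phi$ change across frequency scales and require a carefully tuned anisotropic dyadic decomposition. Second, the Poisson coupling $b\nabla\Delta^{-1}\rho$ introduces a $|\xi|^{-1}$ singularity in the bilinear multipliers at low output frequency, so the complex variable must be chosen (with weights in $\Omega$) to force the nonlinearity to vanish to appropriate order as $\xi\to 0$; combined with the observation that the curl-free constraint removes the rotational sector entirely, this should make the low-frequency singularities of $m_\pm$ integrable. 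Once the $Z$-norm is propagated and the energy bootstrap closes, standard local well-posedness together with the dispersive bound produces the global solution with $\|n-1\|_{L^\infty} + \|\nabla v\|_{L^\infty} \lesssim (1+t)^{-1}$ claimed in Theorem~\ref{MainThm}.
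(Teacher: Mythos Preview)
Your overall architecture---diagonalize to a scalar Klein--Gordon-type equation, apply a Shatah normal form to pass from quadratic to cubic, and close a bootstrap in a high energy norm $H^N$ plus a dispersive $Z$-norm on the profile---matches the paper. Two points, however, are genuine gaps.

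\medskip
\textbf{The energy estimate is not integrable in $2D$.} You claim
\[
\frac{d}{dt}\|u\|_{H^N}^2 \lesssim \|u\|_{L^\infty}^2\,\|u\|_{H^N}^2 \lesssim (1+t)^{-2}\|u\|_{H^N}^2,
\]
i.e.\ two factors of $L^\infty$ decay. This would be correct only if you first apply the normal form and then run the $H^N$ energy estimate on the resulting cubic equation. But the normal form bilinear operators $B$ have symbols $m_{\mu\nu}/\Phi_{\mu\nu}$ that \emph{grow} at high frequency (since $|\Phi_{\mu\nu}|$ degenerates like $(1+\min(|\xi|,|\eta|,|\xi-\eta|))^{-1}$), so $B$ does not map $H^N\times H^N\to H^N$; the transformed cubic equation loses derivatives relative to the original quasilinear one, and you cannot close in $H^N$. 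The paper therefore does the energy estimate on the \emph{original} system, exploiting its symmetric/commutator structure to get only a single decaying factor,
\[
\frac{d}{dt}E_N \lesssim \|U(t)\|_{Z'}\,E_N \lesssim (1+t)^{-1}E_N,
\]
which is \emph{not} integrable. The resolution is to allow $E_N(t)\lesssim E_N(0)(1+t)^\delta$ for small $\delta$, and to compensate by controlling a lower-regularity norm $Y^{N_0}$, $N_0<N$, uniformly in time; the normal form (with its derivative loss) is used only at this lower regularity.

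\medskip
\textbf{The quadratic phases never vanish.} You say the $+--$ phase $\omega(\xi)-\omega(\eta)-\omega(\xi-\eta)$ vanishes on a hypersurface and requires a resonant/nonresonant splitting. In fact for the Klein--Gordon symbol $\Lambda(\xi)=\sqrt{a|\xi|^2+b}$ one has $|\Lambda(\xi)\pm\Lambda(\eta)\pm\Lambda(\xi-\eta)|\gtrsim (1+\min(|\xi|,|\eta|,|\xi-\eta|))^{-1}$, so \emph{all} quadratic phases are elliptic and the first normal form removes the entire quadratic nonlinearity. The genuine resonance occurs only at the \emph{cubic} level, for the phase $\Lambda(\xi)-\Lambda(\xi-\eta)-\Lambda(\eta-\chi)+\Lambda(\chi)$ (the $++-$ combination), which does vanish. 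The paper treats the remaining three cubic phases by a second normal form (they are again elliptic) and handles the $++-$ cubic term by a physical-space localization argument exploiting approximate finite speed of propagation, rather than the pure Fourier space-time resonance method you outline. Your sketch does not identify this as the crux, and the ``shrinking neighborhood'' argument you propose would need to be made precise exactly here.
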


\begin{remark}
The precise nature of the norm is given in \eqref{sec4} below. It controls a finite number of derivatives and  requires localization of the initial perturbations.
Its most notable feature is that finiteness of the norm implies that the perturbation is electrically neutral:
\begin{equation*}
\int_{\mathbb{R}^2}(n(x,0)-1)dx=0,
\end{equation*}
which is then conserved by the flow.
This condition is also necessary in order to ensure finiteness of the physical (conserved) energy
\begin{equation}\label{Energy}
E=\frac{1}{2}\int_{\mathbb{R}^2}\left[n\vert v\vert^2+a (n-1)^2+b\vert\nabla\phi\vert^2\right]dx,
\end{equation}
hence we find it an acceptable assumption.

The irrotationality assumption is also propagated by the flow. It removes a component in the system that is only transported and does not obey nice decay estimates.
\end{remark}

Theorem \ref{MainThm} follows from the more precise Theorem \ref{MainThm2} below. In order to attack this problem, we restate it as a quasilinear dispersive equation. The main difficulties then come from the slow decay of the norms, the quadratic power of the nonlinearity and some nonlocal terms in the nonlinearity (Riesz transforms) that prevent good localization of the small frequencies and convenient use of the only almost integrable norm ($L^\infty$).

The first two problems are classical in the study of quasilinear dispersive systems and several methods have been developed to address these difficulties, including normal form transformations \cite{Sha} and commuting vector fields \cite{Kla}. More recently refinements and new developments from Gustafson-Nakanishi-Tsai \cite{GNT2,GNT} and Germain-Masmoudi-Shatah \cite{GerMas,GerMasSha,GerMasSha2} have led to progress in dealing with various physical problems. Our analysis is in the framework of this general scheme, and is especially close to the analysis of the water-wave problem in \cite{GerMasSha2}.

The proof relies on two distinct components. The first component, {\it{the energy method}}, exploits the existence of a conserved physical energy given in \eqref{Energy}, which comes from the subtle structure of the nonlinearity. This implies almost conservation of higher order energies, up to commutators that are {\it lower order} in derivative and are at least of {\it cubic order} and therefore can be controlled if the solution decays sufficiently fast. In dimension $3$, these error terms are integrable and the higher order energies remain bounded (see e.g. \cite{Guo,Sha}). In dimension $2$, all the norms are only almost integrable and the highest order energy is allowed to grow slowly.

The second component of the proof is concerned with proving the decay estimate needed to control the commutators above. It exploits the fact that we consider a perturbation of a {\it constant} solution, which allows us to use the {\it{Fourier transform method}}. The analysis here depends on the structure of the linearized problem (in particular the {\it dispersion relation} $\omega(k)$) and on delicate cancellation properties of the nonlinearity (the {\it{null structure}}), which are particularly important to bound the constribution of very low frequencies. We first integrate the quadratic terms using a normal form transformation\footnote{This transformation, introduced in \cite{Sha}, is always possible if the phase velocity $\omega(k)/\vert k\vert$ is decreasing and $\omega(k)\ge 0$ is increasing.}. We are then left with several cubic terms which oscillate along different phases. We account for the elliptic phases (which never vanish) using another iteration of the normal form transformation. This produces quartic terms which can be easily estimated. We account for the remaining non-elliptic phase by exploiting its more ``hyperbolic'' nature: it is somewhat local in space and ``commutes\footnote{in the sense that, as for ``commuting vector fields'', it obeys a variant of Leibniz rule: $xT[f,g,h]=T_1[xf,g,h]+T_2[f,xg,h]+T_3[f,g,xh]$ where $T_1$, $T_2$ and $T_3$ have similar properties as $T$}'' at first order with the position operator $f\mapsto xf$.

As mentioned before, this is similar to the approach used in \cite{GerMasSha2} for the water-wave problem, which has a similar structure. However, we introduce some different ideas, such as more involved space--frequency analysis, that we hope can find further applications.

First of all, we only work on the linear profile and derive all our estimates from information about it. This was already done in \cite{GNT} in $3D$ but encounters difficulties in $2D$ due to the fact that we need an endpoint dispersion estimate ($L^\infty$). This forces us to localize in space using the resolution of unity associated with the position operator in order to create a norm having the nice properties of $L^2$ but which is stronger than the $L^1$-norm. Controlling this norm leads us to make most of our analysis in the physical space rather than in the Fourier space. We believe this gives a clearer picture and accounts better for the finite speed of propagation inherent in dispersive equations\footnote{Note that this finite speed of propagation is only for the linear flow. The nonlinear flow contains nonlocal operators (Riesz transforms) and does not satisfy exact analogues of the finite speed of propagation for solutions of Klein-Gordon equation, except in the radial case as pointed out in \cite{Juhi}.}.

In particular, once we introduce localization in physical space, the main problem becomes to bound uniformly the linear profile on a spatial dyadic ring in $xL^2$. Assume to simplify that the initial data is only concentrated in the ball of radius $1$ and that we are looking at a distance $R>>1$. The bound follows from three distinct ingredients:
\begin{enumerate}
\item First, by variation on the finite speed of propagation principle, one can see that, for times $t\lesssim R$, the solution at $R$ is only strongly influenced by the solution at nearby locations (at distance smaller than $R/2$). Since it starts small and it interacts nonlinearly, it remains so.
\item Second, by decay property of the solutions, all the interactions after a time $t>R^{1+\delta}$ add up in $L^2$ to a size smaller than $R^{1-\delta/100}$ and thus are acceptable.
\item Finally, one only needs to consider a small portion of space-time when $R\sim x$, $R\le t\le R^{1+\delta}$. This is of course the main interaction region. In this region, various decay estimates (in time or in space) become equivalent which gives more flexibility. Since for cubic nonlinearities, one only need a small improvement on the decay estimates, this region can be controlled by exploiting the ``hyperbolic'' structure of the phase: it cannot be stationary in all directions. 
\end{enumerate}

Our strategy is as follows: In Section \ref{section2}, we reformulate \eqref{EP} into a quasilinear scalar dispersive equation for a complex-valued unknown, \eqref{norm1} by diagonalizing the linearized system. We then prove our main theorem about the new equation, Theorem \ref{MainThm2} assuming two propositions. This implies Theorem \ref{MainThm} In Section \ref{LocalExistence}, we prove our first proposition, Proposition \ref{LocExLemma} which gives a good local existence theory with energy estimates suitable for our analysis. In Section \ref{Bootstrap} we prove the second proposition, Proposition \ref{Norm}, which implies the decay of solutions needed for the energy estimates. This is done by first using a normal form transformation and then following the strategy explained in points $(1)$, $(2)$ and $(3)$ above. Finally in Section \ref{technical}, we collect various technical estimates needed in the analysis.

\section{Main definitions and propositions}\label{section2}

We first remark that since we start with irrotational initial velocity, the velocity remains irrotational, since $\omega=\hbox{curl}(v)$ satisfies a well-known transport equation. Hence we may assume that $v=\nabla h$ for some velocity potential $h$.
We also set $n=1+\rho$ and  \eqref{EP1} becomes
\begin{equation}\label{EP2}
\begin{split}
\partial _{t}\rho+\Delta h+\partial_\al(\rho\partial_\al h)&=0,\\
\partial _{t}h+(1/2)(\partial_\al h\partial_\al h)+a\rho&=b\phi.\\
\Delta \phi&=\rho.
\end{split}
\end{equation}

Let $|\nabla|$ denote the operator on $\mathbb{R}^2$ defined by the Fourier multiplier $\xi\mapsto |\xi|$, and let $g:=|\nabla|^{-1}\rho$. In terms of $g,h$ the system \eqref{EP2} becomes
\begin{equation}\label{EP3}
\begin{split}
&\partial_tg-|\nabla|h+|\nabla|^{-1}\partial_j(|\nabla|g\cdot\partial_j h)=0,\\
&\partial_th+(a|\nabla|+b|\nabla|^{-1})g+(1/2)(\partial_j h\partial_j h)=0.
\end{split}
\end{equation}
Letting
\begin{equation*}
U:=\sqrt{a\vert\nabla\vert^2+b} g+i|\nabla|h,
\end{equation*}
we derive the equation
\begin{equation}\label{norm1}
(\partial_t+i\Lambda)U=\frac{i}{4}\sum_{j=1}^2R_j\Lambda\big[|\nabla|\Lambda^{-1}(U+\overline{U})\cdot R_j(U-\overline{U})\big]+\frac{i}{8}\sum_{j=1}^2|\nabla|\big[R_j(U-\overline{U})\cdot R_j(U-\overline{U})\big],
\end{equation}
where
\begin{equation*}
\Lambda:=\sqrt{a|\nabla|^2+b},\qquad R_j:=\partial_j/|\nabla|.
\end{equation*}

We fix $\varphi:\mathbb{R}\to[0,1]$ an even smooth function supported in $[-8/5,8/5]$ and equal to $1$ in $[-5/4,5/4]$. Let
\begin{equation*}
\varphi_k(x):=\varphi(|x|/2^k)-\varphi(|x|/2^{k-1})\text{ for any }k\in\mathbb{Z},\,x\in\mathbb{R}^2,\qquad \varphi_I:=\sum_{m\in I\cap\mathbb{Z}}\varphi_m\text{ for any }I\subseteq\mathbb{R}.
\end{equation*}
Let $P_k$, $k\in\mathbb{Z}$, denote the operator on $\mathbb{R}^2$ defined by the Fourier multiplier $\xi\to \varphi_k(\xi)$. Similarly, for any $I\subseteq \mathbb{R}$ let $P_I$ denote the operator on $\mathbb{R}^2$ defined by the Fourier multiplier $\xi\to \varphi_I(\xi)$. For $T\geq 1$ and integers $1\leq N_0<N$ we define
\begin{equation}\label{sec4}
\begin{split}
X^N_T(\mathbb{R}^2\times[0,T]):=\{&f\in C([0,T]:H^N(\mathbb{R}^2)):e^{it\Lambda}f\in C([0,T]:Y^{N_0}(\mathbb{R}^2))\\
&\text{ and }\|f\|_{X^N_T}:=\sup_{t\in[0,T]}(1+t)^{-\delta}\|f(t)\|_{H^N}+\sup_{t\in[0,T]}\|e^{it\Lambda}f(t)\|_{Y^{N_0}}<\infty\},
\end{split}
\end{equation}
where
\begin{equation}\label{sec5}
\begin{split}
&Y^{N_0}(\mathbb{R}^2):=\{\phi\in L^2(\mathbb{R}^2):\|\phi\|_{Y^{N_0}}:=\|\phi\|_{H^{N_0}}+\|\phi\|_Z<\infty\},\\
&\|\phi\|_Z:=\sup_{k\in\mathbb{Z}}(2^{k/10}+2^{10k})\Big[\|P_k\phi\|_{L^2}+\sum_{j\in\mathbb{Z}_+}2^j\|\varphi_j(x)\cdot P_k\phi(x)\|_{L^2}\Big].
\end{split}
\end{equation}

The spaces $X^N_T$ are our main spaces, and we use them to control our "smooth" solutions. 
First we control the high energy norm $H^N$ which is allowed to grow slowly in time; this growth appears to be necessary in $2$ dimensions, due to the 
 non-integrable factor $(1+t)^{-1}$ in the dispersive bound \eqref{keydisperse} below, but is not necessary in $3$ dimensions (see \cite{Guo}). 

We also control an intermediate energy norm $H^{N_0}$, for some $N_0$ chosen smaller than $N$, uniformly in time. 
This intermediate norm is mostly for convenience and can be removed.

Finally, we control the key $Z$ norm described in \eqref{sec5}, which captures the dispersive nature of our flow. 
Variants of this norm are of course possible, see for example the similar norms used in \cite{GerMas,GerMasSha,GerMasSha2}. 
We make the specific choice described in \eqref{sec5} in order to achieve two basic inequalities,
\begin{equation}\label{keychoice}
\begin{split}
&\|R_1f\|_{Z}+\|R_2f\|_{Z}\lesssim \|f\|_{Z},\\
&\sup_{k\in\mathbb{Z}}(2^{k/10}+2^{10k})\|P_k f\|_{L^1}\lesssim \|f\|_{Z}. 
\end{split}
\end{equation}
The first inequality in \eqref{keychoice} is a consequence of the proof of Lemma \ref{tech1} and is necessary because our nonlinearity contains several Riesz transforms, see \eqref{norm1}. 
The second inequality in \eqref{keychoice} is an easy consequence of the definition and explains, in particular, the choice of the $l^1$ sum in $j$. It can be combined with the basic dispersive estimate  
\begin{equation}\label{keydisperse}
 \|P_le^{-it\Lambda}f\|_{L^\infty}\lesssim (1+t)^{-1}(1+2^{2l})\|f\|_{L^1},\qquad t\in\mathbb{R},
\end{equation}
to show that
\begin{equation}\label{keydisperse2}
\|P_ke^{-it\Lambda}f\|_{L^\infty}\lesssim (1+t)^{-1}(2^{k/10}+2^{8k})^{-1}\|f\|_{Z},\qquad t\in\mathbb{R},k\in\mathbb{Z}.
\end{equation}

We can now state our main theorem in terms of the function $U$. 

\begin{theorem}\label{MainThm2}
 Assume $N=30$, $N_0=20$, and $\delta=1/100$. There is $\overline{\varepsilon}$ sufficiently small such that if 
\begin{equation*}
 \|U_0\|_{H^N}+\|U_0\|_{Y^{N_0}}\leq\overline{\varepsilon}
\end{equation*}
then there is a unique global solution $U\in C([0,\infty):H^N)$ of the initial-value problem
\begin{equation}\label{invp}
\begin{cases}
&(\partial_t+i\Lambda)U=\frac{i}{4}\sum_{j=1}^2\Lambda R_j\big[|\nabla|\Lambda^{-1}(U+\overline{U})\cdot R_j(U-\overline{U})\big]+\frac{i}{8}\sum_{j=1}^2|\nabla|\big[R_j(U-\overline{U})\cdot R_j(U-\overline{U})\big],\\
&U(0)=U_0.
\end{cases}
\end{equation}
In addition, $U\in X^N_T$ for any $T\geq 1$ and
\begin{equation}\label{invp2}
 \sup_{t\in[0,\infty)}\big[(1+t)^{-\delta}\|e^{it\Lambda}U(t)\|_{H^N}+\|e^{it\Lambda}U(t)\|_{Y^{N_0}}\big]\lesssim \|U_0\|_{H^N}+\|U_0\|_{Y^{N_0}}.
\end{equation}
\end{theorem}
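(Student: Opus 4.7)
The plan is a standard continuity/bootstrap argument combining the local existence theory (Proposition~\ref{LocExLemma}) with the dispersive control (Proposition~\ref{Norm}). For $T \ge 1$ set
$$M(T) := \sup_{t \in [0,T]}\big[(1+t)^{-\delta}\|U(t)\|_{H^N} + \|e^{it\Lambda}U(t)\|_{Y^{N_0}}\big],$$
so that $M(T) = \|U\|_{X^N_T}$. Proposition~\ref{LocExLemma} supplies a local solution on a short interval $[0,T_0]$ and a continuation criterion allowing the solution to be extended as long as $M$ stays finite and small. I would run the bootstrap on the hypothesis $M(T)\le\varepsilon_1:=K\bar\varepsilon$ for a large fixed constant $K$, and improve it to $M(T)\le\varepsilon_1/2$ for $\bar\varepsilon$ sufficiently small; a standard continuity argument then yields $M(T)\lesssim\bar\varepsilon$ for all $T$, which is exactly \eqref{invp2}, and the continuation criterion promotes the local solution to a global one.

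The bridge between the two parts of $M$ is the dispersive estimate: under the bootstrap assumption, Plancherel/\eqref{keychoice} together with \eqref{keydisperse2} give, after summing over dyadic frequencies,
$$\|U(t)\|_{W^{s_0,\infty}} \lesssim \varepsilon_1 (1+t)^{-1}$$
for some $s_0\ll N_0$. This is the only place where $\|\cdot\|_Z$ is really used in the argument for Theorem~\ref{MainThm2}.

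The first leg of the improvement is the high-norm energy bound. Proposition~\ref{LocExLemma} should provide an almost-conservation inequality of schematic form
$$\frac{d}{dt}\|U(t)\|_{H^N}^2 \lesssim \|U(t)\|_{W^{s_0,\infty}}\,\|U(t)\|_{H^N}^2,$$
so that Gronwall combined with the dispersive decay yields
$$\|U(t)\|_{H^N}^2 \lesssim \bar\varepsilon^2\exp\Big(C\varepsilon_1\int_0^t\frac{ds}{1+s}\Big) \lesssim \bar\varepsilon^2(1+t)^{C\varepsilon_1}.$$
Choosing $\bar\varepsilon$ small enough that $C\varepsilon_1\le\delta$ gives $(1+t)^{-\delta}\|U(t)\|_{H^N}\le C_1\bar\varepsilon$. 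The second leg is the profile bound: Proposition~\ref{Norm} is designed to give, under the same bootstrap assumption,
$$\|e^{it\Lambda}U(t)\|_{Y^{N_0}}\le C_1\bar\varepsilon + C_2\varepsilon_1^2,$$
i.e. the linear profile moves only by a quadratic-in-data amount in $Y^{N_0}$. Adding the two inequalities and choosing $K=4C_1$ and $\bar\varepsilon$ small enough that $C_2\varepsilon_1\le 1/4$ produces $M(T)\le\varepsilon_1/2$ and closes the bootstrap.

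The main obstacle is not in this assembly but in the inputs, especially Proposition~\ref{Norm}: the $Z$-norm control requires overcoming the $(1+t)^{-1}$ (non-integrable) decay through the normal-form/physical-space machinery sketched in the introduction, in particular handling the Riesz factors at low frequency and the non-elliptic phase via the $xL^2$ localization argument. The passage from $X^N_T$ bounds to the $L^\infty$ decay $\|n-1\|_{L^\infty}+\|\nabla v\|_{L^\infty}\lesssim(1+t)^{-1}$ stated in Theorem~\ref{MainThm} is then a direct consequence of \eqref{keydisperse2} applied to $U$ and its frequency pieces, once Theorem~\ref{MainThm2} is established.
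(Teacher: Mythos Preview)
Your proposal is correct and follows essentially the same bootstrap scheme as the paper: both use the dispersive bound \eqref{basic} (which gives $\|U(t)\|_{Z'}\lesssim (1+t)^{-1}\|e^{it\Lambda}U(t)\|_Z$), feed this into the energy inequality \eqref{EGrowth} and Gronwall to get $(1+t)^{-\delta}$ growth of $\|U\|_{H^N}$, then apply Proposition~\ref{Norm} to recover the $Y^{N_0}$ bound with a quadratic loss, and close by continuity (the paper bootstraps only on the $Y^{N_0}$ part with threshold $\epsilon^{3/4}$, you bootstrap on the full $M(T)$ with threshold $K\bar\varepsilon$---both work). One small imprecision: the norm appearing in the energy estimate is the $Z'$ norm of \eqref{Z'norm}, not a standard $W^{s_0,\infty}$; this matters because the nonlinearity contains Riesz transforms, which are unbounded on $L^\infty$ but bounded on $Z'$, so the schematic $\|U\|_{W^{s_0,\infty}}$ should really be $\|U\|_{Z'}$ throughout.
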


It is easy to see that Theorem \ref{MainThm2} implies Theorem \ref{MainThm}, since
\begin{equation*}
 n-1=\Lambda^{-1}|\nabla|(\Re U),\qquad v_1=R_1(\Im U),\qquad v_2=R_2(\Im U).
\end{equation*}
On the other hand, Theorem \ref{MainThm2} is a consequence of Proposition \ref{LocExLemma} and Proposition \ref{Norm} below. We start with the local existence theory:

\begin{proposition}\label{LocExLemma}
(i) There exists $\delta_0>0$ such that for any $U_0\in H^3$ satisfying
\begin{equation}\label{SmallNess}
\Vert U_0\Vert_{H^3}\le \delta_0,
\end{equation}
there exists a unique solution $U\in C([0,1]:H^3)$ of \eqref{invp} such that $U(0)=U_0$ and
\begin{equation*}
 \sup_{t\in[0,1]}\|U(t)\|_{H^3}\lesssim \Vert U_0\Vert_{H^3}.
\end{equation*}

(ii) Assume in addition that $U_0\in H^M$, $M\in[3,40]\cap\mathbb{Z}$. Then $U\in C([0,1]:H^M)$ and for any $0\le s\le t\le 1$,
\begin{equation}\label{EGrowth}
E_M(t)\le E_M(s)+C\int_s^t\|U(t')\|_{Z'}\cdot E_M(t^\prime)dt^\prime
\end{equation}
where the $M$-th order energy, defined in \eqref{DefE} below, satisfies $E_M(t)\simeq \Vert U(t)\Vert_{H^M}^2$ uniformly in time and
\begin{equation}\label{Z'norm}
 \|f\|_{Z'}:=\sup_{k\in\mathbb{Z}}(2^{k/2}+2^{2k})\|P_k f\|_{L^\infty}.
\end{equation}

(iii) With $N_0$ as in Theorem \ref{MainThm2}, assume $T\geq 1$ and $U\in C([0,T]:H^{N_0})$ is a solution of \eqref{invp} with the property that $U_0\in Y^{N_0}$. Then 
\begin{equation*}
\begin{split}
&\sup_{t\in[0,T]}\|e^{it\Lambda}U(t)\|_{Y^{N_0}}\leq C\big(\|U_0\|_{Y^{N_0}},\sup_{t\in[0,T]}\|U(t)\|_{H^{N_0}},T\big),\\
&\lim_{t'\to t}\|e^{it'\Lambda}U(t')-e^{it\Lambda}U(t)\|_{Y^{N_0}}=0,\qquad\text{ for any }t\in[0,T].
\end{split}
\end{equation*}
\end{proposition}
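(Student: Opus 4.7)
My plan is to treat the three parts of Proposition~\ref{LocExLemma} in sequence, each by an energy-method argument adapted to the structure of \eqref{invp}. Schematically the nonlinearity has the form $\mathcal{N}(U)=\nabla[m_1(D)U\cdot m_2(D)U]$ with $m_1,m_2$ bounded Fourier multipliers (Riesz transforms and $|\nabla|\Lambda^{-1}$), so it carries exactly one derivative loss---precisely what the $L^2$-based energy method, combined with integration by parts on top-order pieces, can accommodate.

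For (i), I would differentiate $\|U\|_{H^3}^2$: the skew-adjointness of $i\Lambda$ eliminates the linear contribution, and the top-order nonlinear pairing $\int U\,\partial^4 U\cdot\partial^3 U\,dx$ is handled by one integration by parts, leaving terms bounded by $\|\partial U\|_{L^\infty}\|U\|_{H^3}^2$. The Sobolev embedding $H^2\hookrightarrow L^\infty$ in two dimensions then gives $\tfrac{d}{dt}\|U\|_{H^3}^2\lesssim\|U\|_{H^3}^3$, so under \eqref{SmallNess} the solution exists on $[0,1]$ with the claimed bound. Existence follows by a standard Galerkin/frequency-truncation approximation; uniqueness comes from the same argument on the difference in the slightly weaker $H^2$-norm, closed by Gronwall.

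For (ii), the construction is essentially the same at order $M$: one takes $E_M(t)\simeq\|U(t)\|_{H^M}^2$, possibly modified by a cubic correction, and expands $\partial^M\mathcal{N}$ by Leibniz. After one integration by parts on the top-order term, the remaining pieces are pairings of a factor in $L^2$ (absorbed by $E_M$) with the other(s) in a Littlewood--Paley-localized $L^\infty$. Summing the resulting series over the ``low-frequency'' factor requires precisely the weights in \eqref{Z'norm}: the $2^{2k}$-weight absorbs the outer $|\nabla|$ or $\Lambda$ derivative at high frequencies, while the $2^{k/2}$-weight offsets the low-frequency factor $|\nabla|\Lambda^{-1}\sim|\xi|/\sqrt{b}$ that lives inside $\mathcal{N}$. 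This yields \eqref{EGrowth}.

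For (iii), set $V(t):=e^{it\Lambda}U(t)$; then $V(t)=U_0+\int_0^t e^{is\Lambda}\mathcal{N}(U(s))\,ds$, and it suffices to bound the integrand in $Y^{N_0}$. The $H^{N_0}$-part is immediate from the $L^2$-isometry of $e^{is\Lambda}$ and a Moser estimate on $\mathcal{N}$. The $Z$-part is the crux: the spatial cutoffs $\varphi_j(x)$ in \eqref{sec5} do not commute with $e^{is\Lambda}$, and I would estimate them on the Fourier side via
\begin{equation*}
\|x\,P_ke^{is\Lambda}f\|_{L^2}=\|\partial_\xi\big(\varphi_k(\xi)e^{is\Lambda(\xi)}\widehat{f}(\xi)\big)\|_{L^2}.
\end{equation*}
When $\partial_\xi$ falls on the phase it produces $is\Lambda^\prime(\xi)$, bounded on $\mathbb{R}^2$ (since $\Lambda\geq\sqrt{b}$ and $\Lambda^\prime\in L^\infty$) but linear in $s$; when it falls on $\widehat{f}$ it produces $\widehat{xf}$, already controlled; and when it falls on $\varphi_k$ the $2^{-k}$ loss is compensated by the frequency support. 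Integrating in $s$ yields a constant of the claimed form $C(\|U_0\|_{Y^{N_0}},\sup\|U\|_{H^{N_0}},T)$; continuity in $t$ follows by dominated convergence applied to the source integral. The main obstacle is exactly this nonlocality of $e^{it\Lambda}$: spatial localization at scale $2^j$ has to be rerouted through Fourier-side commutators whose symbols grow linearly in $s$, producing a polynomial-in-$T$ bound that is intrinsic to this proposition and will only be removed globally in Proposition~\ref{Norm}.
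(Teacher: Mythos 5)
Your sketch correctly identifies the one-derivative-loss structure of the nonlinearity and the role of the weights in $Z'$, but it misses the single structural device that makes the energy estimate close. Writing $U=X+iY$, the worst term appearing when $\tfrac{d}{dt}\|U\|_{H^M}^2$ is expanded is, after commuting $P=D^\alpha$ through the multipliers and peeling off a commutator,
\begin{equation*}
\int_{\mathbb{R}^2} R_jPX\cdot \big(|\nabla|\Lambda^{-1}X\big)\cdot \Lambda PR_jY\,dx,
\end{equation*}
which has $|\alpha|+1$ derivatives on $Y$ and $|\alpha|$ on $X$. Unlike the Burgers-type term $\int u\,\partial^\alpha u\,\partial^{\alpha+1}u\,dx$, this is \emph{not} half a perfect derivative: $X$ and $Y$ are different components, the coefficient $|\nabla|\Lambda^{-1}X$ is not a derivative of one of the unknowns being differentiated, and the companion term $\int (|\nabla|\Lambda^{-1}X)\cdot R_jPY\cdot\Lambda PR_jX\,dx$ that would symmetrize it simply does not occur in $\tfrac{d}{dt}\|(PX,PY)\|_{L^2}^2$. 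The paper eliminates it by adding the cubic correction $\sum_j\int|\nabla|\Lambda^{-1}X\cdot(R_jPY)^2\,dx$ to $E_P$, whose $X$-time-derivative produces precisely this term with opposite sign (the cancellation of $I_P$ against $IV_P$); the remaining top-order piece $\int R_jPX\cdot P|\nabla|X\cdot R_jY\,dx$ then closes by the $\partial_jR_m=\partial_mR_j$ antisymmetry. You mention a possible cubic modification in (ii) in passing, but the argument you give for (i) (differentiate $\|U\|_{H^3}^2$ and integrate by parts once) does not close even at the cruder level $\|U\|_{H^3}^3$, and the argument for (ii) never explains why the correction is needed or how it produces \eqref{EGrowth} with $\|U\|_{Z'}$ rather than $\|U\|_{H^3}$ on the right. (The paper also constructs solutions by parabolic regularization followed by a Bona--Smith frequency-truncation limit rather than Galerkin; that is a replaceable choice, the corrected energy is not.)

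For (iii) there are two independent gaps. First, the $Z$-norm in \eqref{sec5} contains the $\ell^1_j$ sum $\sum_{j\geq 0}2^j\|\varphi_j(x)P_k\phi\|_{L^2}$ over spatial dyadic shells. This is strictly stronger than the $\ell^2_j$-type weight $\|xP_k\phi\|_{L^2}$ that your $\partial_\xi$-computation controls, and the $\ell^1$ structure is exactly what yields the embedding $\|P_kf\|_{L^1}\lesssim\|f\|_Z$ used for dispersion; one cannot recover it from $\|\partial_\xi(\varphi_ke^{is\Lambda}\widehat f)\|_{L^2}$. The paper instead decomposes the inputs into spatial shells $V^\mu_{j,l}=\varphi_{[j-4,j+4]}P_lV_\mu$, integrates by parts in the physical-space kernel of the bilinear operator to kill the off-diagonal contributions, and estimates the diagonal ones in $L^2$; this is what makes the $\ell^1_j$ sum converge. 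Second, you treat $\sup_{t}\|e^{it\Lambda}U(t)\|_{Y^{N_0}}$ as a finite quantity you may bound, but a priori it need not be. The paper introduces the truncated norms $Z_J$, each bounded by $\|\cdot\|_{H^{N_0}}$ and hence finite, proves a $J$-uniform Gronwall inequality of the form $\|e^{it'\Lambda}U(t')-e^{it\Lambda}U(t)\|_{Z_J}\leq \widetilde C\,|t'-t|\,(1+\sup_s\|e^{is\Lambda}U(s)\|_{Z_J})$, and only then lets $J\to\infty$. Without some analogue of this regularization, the iteration in your sketch does not close.
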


The main conclusion in the proposition is the energy inequality \eqref{EGrowth}, which depends on the $Z'$ norm defined in \eqref{Z'norm}. This norm has to be chosen strong enough to allow for the energy inequality \eqref{EGrowth} to hold; in particular
\begin{equation*}
\|\partial_iR_j f\|_{L^\infty}+\|R_j f\|_{Z'}\lesssim \|f\|_{Z'},\qquad j,k\in\{1,2\}.
\end{equation*}
On the other hand, the $Z'$ norm has to be chosen small enough, in such a way that the function $t\to\|U(t)\|_{Z'}$ is almost integrable in time; the main inequality we need is
\begin{equation}\label{basic}
 \|e^{-it\Lambda}f\|_{Z'}\lesssim (1+t)^{-1}\|f\|_{Z},\qquad \text{ for any }t\in[0,\infty),
\end{equation}
which is an easy consequence of \eqref{keydisperse2}.

To prove the global result in Theorem \ref{MainThm2} we also need the following bootstrap estimate:

\begin{proposition}\label{Norm}
Assume that $U\in X^N_T$ is a solution of the equation
\begin{equation*}
(\partial_t+i\Lambda)U=\frac{i}{4}\sum_{j=1}^2\Lambda R_j\big[|\nabla|\Lambda^{-1}(U+\overline{U})\cdot R_j(U-\overline{U})\big]+\frac{i}{8}\sum_{j=1}^2|\nabla|\big[R_j(U-\overline{U})\cdot R_j(U-\overline{U})\big]
\end{equation*}
with the property that
\begin{equation}\label{sec2}
\|U\|_{X^N_T}\leq\varepsilon_0\leq 1.
\end{equation}
Then
\begin{equation}\label{sec3}
\sup_{t\in[0,T]}\|e^{it\Lambda}U(t)-U(0)\|_{Y^{N_0}}\lesssim \varepsilon^2_0.
\end{equation}
\end{proposition}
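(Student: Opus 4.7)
The plan is to analyze the Duhamel equation for the profile $V(t) := e^{it\Lambda}U(t)$ and reduce the quadratic problem to a cubic one via a normal form. In Fourier variables, $\partial_t \widehat V(\xi,t)$ is a sum over sign choices $\iota_1,\iota_2\in\{+,-\}$ of oscillatory integrals
\[
\int_{\mathbb{R}^2} e^{it\Phi_{\iota_1\iota_2}(\xi,\eta)}\, m_{\iota_1\iota_2}(\xi,\eta)\, \widehat{V}_{\iota_1}(\eta)\widehat{V}_{\iota_2}(\xi-\eta)\, d\eta,
\]
where $V_+=V$, $V_-=\overline V$, $\Phi_{\iota_1\iota_2}(\xi,\eta)=\Lambda(\xi)-\iota_1\Lambda(\eta)-\iota_2\Lambda(\xi-\eta)$, and $m_{\iota_1\iota_2}$ are the symbols read off from \eqref{invp}. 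Since $\Lambda\ge\sqrt{b}>0$, the Klein--Gordon-type dispersion gives $|\Phi_{\iota_1\iota_2}|\gtrsim 1$ in the cases $(\pm,\pm)$ with equal signs; in the remaining cases $(+,-),(-,+)$, $\Phi$ vanishes at $\xi=0$, but the null structure (arising from the Riesz factor $|\nabla|$ outside each product in \eqref{invp}) forces $m_{\iota_1\iota_2}(\xi,\eta)$ to vanish like $|\xi|$ there, so $m/\Phi$ is a bounded symbol.

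The first step is to integrate by parts in $s$ via $e^{is\Phi}=\partial_s(e^{is\Phi})/(i\Phi)$. This produces the quadratic boundary contribution $\mathcal{B}[V,V](t)-\mathcal{B}[V(0),V(0)]$ with symbol $m/(i\Phi)$, together with a cubic remainder obtained by substituting for $\partial_s V$ from the equation. The boundary term is controlled by bilinear estimates giving $\|\mathcal{B}[V,V]\|_{Y^{N_0}}\lesssim\|V\|_{Y^{N_0}}^2\lesssim\varepsilon_0^2$; these rest on the null vanishing just described, plus the basic properties of the $Z$ norm, in particular the first inequality in \eqref{keychoice} and the dispersive bound \eqref{keydisperse2}.

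The cubic remainder is then split according to the outer and inner sign combinations. For the elliptic cubic phases (bounded away from zero) I would apply a second normal form, reducing to quartic terms that are controlled crudely by combining the growing high-energy bound $\|U\|_{H^N}\lesssim(1+t)^\delta$ with the integrable-enough decay $(1+t)^{-1+\delta}$ coming from \eqref{keydisperse2} via \eqref{basic}. For the remaining non-elliptic phase I would follow the three-region scheme described in the introduction: for each dyadic spatial shell $|x|\sim 2^j=R$ and dyadic frequency $2^k$ appearing in $\|V\|_Z$, split the $s$-integral into $s\lesssim R$ (where a finite-speed-of-propagation argument on the linear profile keeps the contribution at scale $R$ of order $\varepsilon_0^2$), $s\gtrsim R^{1+\delta}$ (where the $L^\infty\cdot L^\infty\cdot L^2$ Hölder estimate together with \eqref{keydisperse2} sums to $R^{-\delta/100}\varepsilon_0^2$), and the intermediate window $R\lesssim s\lesssim R^{1+\delta}$ (where the non-elliptic phase, failing to be stationary in all directions, yields the required integrated-in-time gain through stationary-phase).

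The main obstacle will be the spatial-localization piece $\sum_{j\ge 0}2^j\|\varphi_j(x)P_k V(t)\|_{L^2}$ in the $Z$ norm, because multiplication by $x$ does not commute with the bilinear/trilinear multipliers produced by the normal form. Here one must exploit, for each relevant trilinear form $T$, the Leibniz-type identity $xT[f,g,h]=T_1[xf,g,h]+T_2[f,xg,h]+T_3[f,g,xh]$ up to commutators given by derivatives of the symbol and phase in $\xi$. Verifying that these commutator multipliers satisfy the same bilinear/trilinear bounds uniformly in the spatial scale $R=2^j$ and the frequency $2^k$, and that the resulting weighted trilinear estimates close under the bootstrap \eqref{sec2}, is where the bulk of the technical work will lie.
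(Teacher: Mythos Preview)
Your outline is essentially the paper's own strategy: a first normal form reduces the quadratic nonlinearity to a bilinear boundary term (estimated in $Y^{N_0}$ via multiplier bounds on $m_{\mu\nu}/\Phi_{\mu\nu}$, the paper's Lemma~\ref{tech2} and Lemma~\ref{ble1}) plus cubic integrals; the three elliptic cubic phases $(+,+,+),(+,-,-),(-,-,-)$ admit a second integration by parts in $s$ (Step~5 of Lemma~\ref{ble2}), while the non-elliptic phase $(+,+,-)$ is handled by the space--time decomposition you describe (the paper's Lemma~\ref{KerLem}).

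Two corrections and comparisons are worth noting. First, your description of the quadratic phases is not quite right: none of $\Phi_{++},\Phi_{+-},\Phi_{--}$ actually vanish; they all satisfy $|\Phi_{\mu\nu}(\xi,\eta)|\gtrsim(1+\min(|\xi|,|\eta|,|\xi-\eta|))^{-1}$ (see~\eqref{la2}), so degeneracy happens at \emph{high} frequency rather than at $\xi=0$, and no null structure at the origin is needed to make $m/\Phi$ well defined. This doesn't affect your strategy, since the relevant multiplier bounds still hold (with the high-frequency losses absorbed by the gap $N_0<N$).

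Second, for the spatial localization in the $Z$ norm, the paper's implementation differs from the Leibniz-type commutator identity you propose. Instead of computing $[x,T]$ and tracking the terms $s\,\nabla_\xi\Phi$ that this produces, the paper passes to physical space, writes each localized bilinear or trilinear piece via an explicit kernel $R(x;y_1,y_2,y_3)$, and uses nonstationary phase in the frequency variables to show rapid decay of $R$ whenever some $|x-y_i|\gtrsim 2^j$ (see~\eqref{cu5},~\eqref{lu5}, and~\eqref{nh40.5}). This allows the inputs $P_{k_i}f_i$ to be replaced by their physical localizations $\varphi_{[j-4,j+4]}P_{k_i}f_i$, whose weighted $L^2$ norms are directly controlled by $\|f_i\|_Z$. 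The two viewpoints are morally equivalent (the paper's own footnote makes this connection), but the kernel route sidesteps the need to bound the $s$-dependent commutator symbols explicitly, which is convenient precisely in the intermediate regime $2^{7m/10}\lesssim 2^j\lesssim 2^m$ where $s$ and $|x|$ are comparable and the stationary-phase argument is delicate.
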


\subsection{Proof of Theorem \ref{MainThm2}}\label{Uproof}
Theorem \ref{MainThm2} is a consequence of Proposition \ref{LocExLemma} and Proposition \ref{Norm}.
Indeed, assume that we start with data $U_0$ satisfying $\|U_0\|_{H^N}+\|U_0\|_{Y^{N_0}}=\epsilon\leq\overline{\varepsilon}$, for some $\overline{\varepsilon}$
sufficiently small relative to the value of $\delta_0$ in Proposition \ref{LocExLemma} (i). Assume that we have constructed a solution $U\in C([0,T]:H^N)$
 (using Proposition \ref{LocExLemma} (ii)), for some $T\geq 1$, such that
\begin{equation}\label{cons1}
 \sup_{t\in[0,T]}\|e^{it\Lambda}U(t)\|_{Y^{N_0}}\leq \epsilon^{3/4}.
\end{equation}
Then, using \eqref{basic},
\begin{equation*}
 \|U(t)\|_{Z'}\lesssim (1+t)^{-1}\epsilon^{3/4},\qquad\text{ for any }t\in[0,T].
\end{equation*}
Then, using \eqref{EGrowth},
\begin{equation*}
E_N(t)-E_N(0)\leq C\epsilon^{3/4}\int_0^t(1+s)^{-1}E_N(s)\,ds,\qquad t\in[0,T],
\end{equation*}
which shows that $E_N(t)\leq E_N(0)(1+t)^\delta$, for any $t\in[0,T]$. Therefore $U\in X^N_T$ and
\begin{equation*}
 \|U\|_{X^N_T}\leq 2\epsilon^{3/4}.
\end{equation*}
We can apply now Proposition \ref{Norm} to conclude that
\begin{equation*}
 \sup_{t\in[0,T]}\|e^{it\Lambda}U(t)-U(0)\|_{Y^{N_0}}\lesssim \epsilon^{3/2}.
\end{equation*}
In other words, if $U$ satisfies $\sup_{t\in[0,T]}\|e^{it\Lambda}U(t)\|_{Y^{N_0}}\leq \epsilon^{3/4}$ as in \eqref{cons1} then $U$ satisfies the stronger inequality  
$\sup_{t\in[0,T]}\|e^{it\Lambda}U(t)\|_{Y^{N_0}}\leq \|U(0)\|_{Y^{N_0}}+C\epsilon^{3/2}$. In view of the continuity of the norm $Y^{N_0}$ 
(see Proposition \ref{LocExLemma} (iii)), it follows that
\begin{equation*}
 \sup_{t\in[0,T]}\|e^{it\Lambda}U(t)\|_{Y^{N_0}}\leq 2\|U(0)\|_{Y^{N_0}}
\end{equation*}
for any solution $U\in C([0,T]:H^N)$ of the initial-value problem \eqref{invp} with $\|U_0\|_{H^N}+\|U_0\|_{Y^{N_0}}\leq\overline{\varepsilon}$. 
The global regularity part of the theorem and the bound \eqref{invp2} follow, using again the energy estimate \eqref{EGrowth}.

\section{Proof of Proposition \ref{LocExLemma}}\label{LocalExistence}

In this section we prove Proposition \ref{LocExLemma}, using the energy method. 
We regularize the equation by parabolic regularization, get uniform estimates and then pass to the limit. The method is, of course, well known. We present all the details here for two reasons: to justify the key energy estimate \eqref{EGrowth}, with the somewhat unusual $Z'$ norm in the right-hand side of the inequality, and to justify the claim in Proposition \ref{LocExLemma} that the solution defines a continuous flow in the $Z$ space. These facts are important in passing to the global result.

We start with the main lemma.

\begin{lemma}\label{energylem1} 
With $K=50$, there is $\delta_1>0$ such that if
\begin{equation*}
 U_0\in H^\infty\text{ and }\|U_0\|_{H^3}\leq \delta_1
\end{equation*}
then, for any $\varepsilon>0$, there is a solution $U=U^\varepsilon\in C([0,1]:H^{K+2})$ of the approximate equation
\begin{equation}\label{AppEq}
\left(\partial_t+i\Lambda-\varepsilon\Delta\right)U=\frac{i}{4}\sum_{j=1}^2\Lambda R_j\left[\vert\nabla\vert\Lambda^{-1}(U+\overline{U})\cdot R_j(U-\overline{U})\right]+\frac{i}{8}\sum_{j=1}^2\vert\nabla\vert\left[R_j(U-\overline{U})\cdot R_j(U-\overline{U})\right]
\end{equation}
with initial data $U(0)=U_0$. Moreover,
\begin{equation}\label{ale1.1}
 \sup_{t\in[0,1]}\|U(t)\|_{H^3}\lesssim \delta_1
\end{equation}
and, for any $\sigma\in[3,K]\cap\mathbb{Z}$ and $0\leq s\leq t\leq 1$,
\begin{equation}\label{ale1}
E_\sigma(t)\le E_\sigma(s)+C\int_s^t\|U(t')\|_{Z'}\cdot E_\sigma(t^\prime)dt^\prime,
\end{equation}
where
\begin{equation}\label{DefE}
\begin{split}
&E_P:=\int_{\mathbb{R}^2}PU\cdot\overline{PU}dx-\frac{1}{8}\sum_{j=1}^2\int_{\mathbb{R}^2}\left[\vert\nabla\vert\Lambda^{-1}(U+\overline{U})\right]\cdot\left[R_jP(U-\overline{U})\cdot R_jP(U-\overline{U})\right]dx,\\
&E_\sigma:=\sum_{P=D^\alpha,\,\vert\alpha\vert\le \sigma}E_P.
\end{split}
\end{equation}
\end{lemma}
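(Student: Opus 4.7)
The overall strategy is standard: for each fixed $\varepsilon>0$, parabolic regularization yields smooth local solutions, and a carefully designed modified energy delivers the uniform-in-$\varepsilon$ bound \eqref{ale1} that extends them to $[0,1]$. The evolution $e^{-t(i\Lambda-\varepsilon\Delta)}$ is parabolic-smoothing in $H^{K+2}$ at cost $(t\varepsilon)^{-1/2}$ per gained derivative, so since the nonlinearity in \eqref{AppEq} is quadratic and loses at most one derivative, Picard iteration in $C([0,T_\varepsilon]:H^{K+2})$ yields a unique smooth local solution $U^\varepsilon$. Extension to $[0,1]$ will be standard once the uniform bound \eqref{ale1.1} is in place.

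The heart of the argument is \eqref{ale1}. Fix $P=D^\alpha$ with $|\alpha|\le\sigma$ and decompose $U=U_r+iU_i$; a direct computation shows that $\mathcal N_1=\tfrac{i}{4}\sum_j\Lambda R_j[|\nabla|\Lambda^{-1}(U+\overline U)\cdot R_j(U-\overline U)]$ is in fact real and equals $-\tfrac12\sum_j\Lambda R_j(A\,R_jU_i)$ with $A=2|\nabla|\Lambda^{-1}U_r$, while $\mathcal N_2$ is purely imaginary. Differentiating $\|PU\|_{L^2}^2$: the skew-Hermitian $-i\Lambda$ contributes $0$, the viscous $\varepsilon\Delta$ gives the favorable $-2\varepsilon\|\nabla PU\|_{L^2}^2$, and $2\,\Re\langle P\mathcal N,PU\rangle$ produces the top-order quasilinear piece
\begin{equation*}
\sum_j\int (\Lambda R_jPU_r)\,A\,(R_jPU_i)\,dx,
\end{equation*}
which is one derivative beyond what a bound of the form $\|U\|_{L^\infty}\|U\|_{H^\sigma}^2$ could absorb. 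The cubic correction in \eqref{DefE} unpacks to $+\tfrac12\sum_j\int A(R_jPU_i)^2\,dx$; differentiating it and substituting the linear part of the equation for $U_i$, namely $\partial_tU_i=-\Lambda U_r+(\text{nonlinear})$, yields
\begin{equation*}
-\sum_j\int A(R_jPU_i)(R_j\Lambda PU_r)\,dx,
\end{equation*}
which, after integration by parts using $R_j^*=-R_j$ (the crucial sign) and $[P,R_j]=0$, exactly cancels the bad term. Conceptually $E_P$ coincides with $\int[a(P\rho)^2+b|\nabla P\phi|^2+n|Pv|^2]\,dx$, the $P$-th order analogue of the physical Hamiltonian \eqref{Energy}, and the cancellation reflects its exact conservation for the unregularized Euler--Poisson system.

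After this cancellation what remains is a sum of trilinear integrals of schematic form $\int(\nabla^{a_1}U)(\nabla^{a_2}U)(\nabla^{a_3}U)\,dx$ with $a_1+a_2+a_3\le 2\sigma+1$ and at most one $a_i\ge\sigma$. Placing the two highest-derivative factors in $L^2$ (each bounded by $E_\sigma^{1/2}$) and the remaining factor in $L^\infty$, together with the embeddings
\begin{equation*}
\|R_jf\|_{Z'}+\|\partial_iR_jf\|_{L^\infty}+\bigl\||\nabla|f\bigr\|_{L^\infty}\lesssim\|f\|_{Z'},
\end{equation*}
which follow directly from the Littlewood--Paley definition \eqref{Z'norm} (the weights $2^{k/2}$ for $k\le 0$ and $2^{2k}$ for $k\ge 0$ produce convergent $\ell^1$ sums against any multiplier of order $\le 1$), yields the target bound by $C\|U\|_{Z'}E_\sigma$. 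Leibniz remainders from $P$ crossing Fourier multipliers are handled by Coifman--Meyer-type paraproduct estimates; the semilinear term $\mathcal N_2$ is treated in the same spirit after one integration by parts in $|\nabla|$. The additional cubic terms generated by $\varepsilon\Delta$ upon differentiating the correction carry a factor of $\varepsilon$ and are absorbed into the dissipation $-\varepsilon\|\nabla PU\|_{L^2}^2$.

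To conclude, the cubic correction is bounded by $\|\rho\|_{L^\infty}\|PU\|_{L^2}^2\lesssim\|U\|_{H^3}\|U\|_{H^\sigma}^2$ via the Sobolev embedding $H^3(\mathbb R^2)\hookrightarrow L^\infty$, so $E_\sigma\simeq\|U\|_{H^\sigma}^2$ as soon as $\|U\|_{H^3}$ is small enough (this fixes $\delta_1$). Applying \eqref{ale1} with $\sigma=3$ together with $\|U\|_{Z'}\lesssim\|U\|_{H^3}$, a first Gronwall argument yields \eqref{ale1.1} uniformly in $\varepsilon$; the higher range $\sigma\in(3,K]$ then follows from a second Gronwall, now with $\|U(t)\|_{Z'}$ already known to be uniformly bounded on $[0,1]$. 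The main obstacle I anticipate is the algebraic verification of the top-order cancellation: one has to track simultaneously the signs from the self-adjointness of $\Lambda$ and $|\nabla|$, the anti-self-adjointness of $R_j$, and the combinatorics produced when $P=D^\alpha$ is distributed across products of Fourier-multiplied factors, and confirm that every surviving term has at most two high-derivative factors so that the $L^\infty$-type $Z'$ bound applies.
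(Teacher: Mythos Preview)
Your proposal is correct and follows essentially the same route as the paper: parabolic regularization for local existence, the modified energy $E_P$ with its cubic correction designed to cancel the top-order quasilinear contribution (the paper's $I_P+IV_P$ cancellation, which is exactly your pairing of $\sum_j\int(\Lambda R_jPU_r)\,A\,(R_jPU_i)$ against the derivative of $\tfrac12\int A(R_jPU_i)^2$), nonnegativity of the viscous contribution by absorbing the $\varepsilon$-cubic terms into the dissipation when $\|U\|_{H^3}$ is small, and a Gronwall closure first at $\sigma=3$ and then for higher $\sigma$.

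One point worth flagging, since you yourself identify it as the likely obstacle: after the main cancellation there remains a term in which all of $P$ lands on the factor $|\nabla|\Lambda^{-1}U_r$, producing schematically $\int R_jPU_r\cdot \Lambda\bigl(P|\nabla|\Lambda^{-1}U_r\cdot R_jU_i\bigr)$, which at face value has two factors at top order plus an extra $\Lambda$. The paper handles this not by a generic Coifman--Meyer bound but by two further moves: a commutator estimate $\|\Lambda(fg)-\Lambda f\cdot g\|_{L^2}\lesssim\|f\|_{L^2}\|g\|_{Z'}$ to bring $\Lambda$ onto $P|\nabla|\Lambda^{-1}U_r$ (yielding $P|\nabla|U_r$), and then the identity $|\nabla|=-\partial_mR_m$ together with $\partial_jR_m=\partial_mR_j$ to recast the resulting integral in a form symmetric in the two high-derivative factors, so that one integration by parts moves the extra derivative onto the low factor $R_jU_i$. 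The same symmetrization trick is what makes $II_P$ (your $\mathcal N_2$ contribution) close. Your sketch is compatible with this, but these are the precise manipulations you will need when you fill in the details.
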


\begin{proof}[Proof of Lemma \ref{energylem1}]
By standard parabolic theory, for any fixed $\varepsilon>0$, the equation \eqref{AppEq} admits a local solution $U=U^\varepsilon\in C([0,T^\varepsilon],H^{K+2})$, for some $T^\varepsilon>0$, with 
\begin{equation}\label{work}
\sup_{t\in[0,T^\varepsilon]}\|U(t)\|_{H^3}\lesssim \delta_1.
\end{equation}
In view of the definition \eqref{DefE}
\footnote{It is important to include the cubic terms in the definition of $E_P$, in order to obtain a key cancellation between the terms $I_P$ and $IV_P$ in the formulas below.}, it follows that
\begin{equation}\label{work2}
\|U(t)\|_{H^\sigma}^2\lesssim E_{\sigma}(t)\lesssim \|U(t)\|_{H^\sigma}^2,\qquad t\in[0,T^\varepsilon],\,\sigma\in [0,K].
\end{equation}

We rewrite $U=X+iY$, thus
\begin{equation}\label{realsystem}
\begin{split}
&\partial_tX=\Lambda Y+\varepsilon\Delta X-\sum_{j=1}^2\Lambda R_j(|\nabla|\Lambda^{-1}X\cdot R_jY),\\
&\partial_tY=-\Lambda X+\varepsilon\Delta Y-(1/2)\sum_{j=1}^2|\nabla|(R_jY\cdot R_jY),\\
&E_P=\int_{\mathbb{R}^2}(PX)^2+(PY)^2\,dx+\sum_{j=1}^2\int_{\mathbb{R}^2}|\nabla|\Lambda^{-1}X\cdot (R_jPY)^2\,dx.
\end{split}
\end{equation}
Therefore, for $P=D^\alpha$, $|\alpha|\leq\sigma$,
\begin{equation*}
\frac{d}{dt}E_P=I_P+II_P+III_P+IV_P+V_P+VI_P-\varepsilon VII_P,
\end{equation*}
where
\begin{equation*}
I_P:=\int_{\mathbb{R}^2}-2PX\cdot \sum_{j=1}^2P\Lambda R_j(|\nabla|\Lambda^{-1}X\cdot R_jY)\,dx,
\end{equation*}
\begin{equation*}
II_P:=\int_{\mathbb{R}^2}-PY\cdot \sum_{j=1}^2P|\nabla|(R_jY\cdot R_jY)\,dx,
\end{equation*}
\begin{equation*}
III_P:=\sum_{j=1}^2\int_{\mathbb{R}^2}|\nabla|Y\cdot (R_jPY)^2\,dx,
\end{equation*}
\begin{equation*}
IV_P:=\sum_{j=1}^2\int_{\mathbb{R}^2}-|\nabla|\Lambda^{-1}X\cdot 2R_jPY\cdot \Lambda R_jPX\,dx,
\end{equation*}
\begin{equation*}
V_P:=\sum_{j=1}^2\int_{\mathbb{R}^2}-\sum_{m=1}^2|\nabla|R_m(|\nabla|\Lambda^{-1}X\cdot R_mY)\cdot (R_jPY)^2\,dx,
\end{equation*}
\begin{equation*}
VI_P:=\sum_{j=1}^2\int_{\mathbb{R}^2}-|\nabla|\Lambda^{-1}X\cdot R_jPY\cdot R_jP|\nabla|\big(\sum_{m=1}^2R_mY\cdot R_mY\big)\,dx
\end{equation*}
\begin{equation*}
\begin{split}
VII_P&:=2\int_{\mathbb{R}^2}\sum_{m=1}^2(\partial_mPX)^2+(\partial_mPY)^2\,dx\\
&+\sum_{j,m=1}^2\int_{\mathbb{R}^2}4\partial_m|\nabla|\Lambda^{-1}X\cdot R_jPY\cdot \partial_mR_jPY+2|\nabla|\Lambda^{-1}X\cdot (\partial_mR_jPY)^2\,dx.
\end{split}
\end{equation*}

We would like to bound the terms in the expression above in terms of $\|U\|_{H^\sigma}^2\|U\|_{Z'}$. For this we need the bounds
\begin{equation}\label{ka}
\begin{split}
&\sum_{m,j=1}^2\|\partial_jR_m U\|_{L^\infty}+\sum_{m,j=1}^2\|\partial_j(|\nabla|\Lambda^{-1}X\cdot R_mY)\|_{L^\infty}\lesssim \|U\|_{Z'},\\
&\|\Lambda(D^\alpha fD^\beta g)\|_{L^2}\lesssim\| \nabla f\|_{L^\infty}\Vert g\Vert_{H^{\sigma}}+\|\nabla g\|_{L^\infty}\| f\|_{H^{\sigma}},\qquad|\al|,|\be|\geq 1,\,|\alpha|+|\beta|\leq\sigma.
\end{split}
\end{equation}
which are proved in Lemma \ref{tech1.5} (recall that $\|U(t)\|_{H^3}\lesssim\delta_1$, $t\in[0,T^\varepsilon]$).

Using \eqref{ka}
\begin{equation*}
|III_P|+|V_P|\lesssim \|U\|_{H^\sigma}^2\|U\|_{Z'}.
\end{equation*}
Moreover, using also integration by parts and the formulas $-|\nabla|=\partial_1R_1+\partial_2R_2$ and $\partial_jR_m=\partial_jR_m$,
\begin{equation*}
\begin{split}
&|II_P|\lesssim\sum_{j,m=1}^2\Big|\int_{\mathbb{R}^2}PR_mY\cdot P\partial_m(R_jY\cdot R_jY)\,dx\Big|\\
&\lesssim \sum_{j,m=1}^2\Big[\Big|\int_{\mathbb{R}^2}PR_mY\cdot \partial_m[P(R_jY\cdot R_jY)-bR_jY\cdot PR_jY]\,dx\Big|
+\Big|\int_{\mathbb{R}^2}PR_mY\cdot \partial_m(R_jY\cdot PR_jY)\,dx\Big|\Big]\\
&\lesssim \|U\|_{H^\sigma}^2\|U\|_{Z'}+\sum_{j,m=1}^2\Big|\int_{\mathbb{R}^2}PR_mY\cdot \partial_jPR_mY\cdot R_jY\,dx\Big|\\
&\lesssim \|U\|_{H^\sigma}^2\|U\|_{Z'},
\end{split}
\end{equation*}
and similarly,
\begin{equation*}
\begin{split}
|VI_P|&\lesssim\sum_{j,m=1}^2\Big|\int_{\mathbb{R}^2}|\nabla|\Lambda^{-1}X\cdot R_jPY\cdot \partial_jP\big(R_mY\cdot R_mY\big)\,dx\Big|\\
&\lesssim \sum_{j,m=1}^2\Big|\int_{\mathbb{R}^2}|\nabla|\Lambda^{-1}X\cdot R_jPY\cdot \partial_j(PR_mY\cdot R_mY\big)\,dx\Big|\\
&+\sum_{j,m=1}^2\Big|\int_{\mathbb{R}^2}|\nabla|\Lambda^{-1}X\cdot R_jPY\cdot \partial_j\big[P\big(R_mY\cdot R_mY\big)-bPR_mY\cdot R_mY\big]\,dx\Big|\\
&\lesssim \sum_{j,m=1}^2\Big|\int_{\mathbb{R}^2}\big(|\nabla|\Lambda^{-1}X\cdot R_mY\big)\cdot R_jPY\cdot \partial_mPR_jY\,dx\Big|+\|U\|_{H^\sigma}^2\|U\|_{Z'}\\
&\lesssim \|U\|_{H^\sigma}^2\|U\|_{Z'},
\end{split}
\end{equation*}
where $b=1$ if $P=\mathrm{Id}$ and $b=2$ if $P=D^\alpha$, $|\alpha|\geq 1$. Therefore
\begin{equation}\label{ale2}
\sum_{P=D^\alpha,\,|\alpha|\leq\sigma}\big[|II_P|+|III_P|+|V_P|+|VI_P|\big]\lesssim \|U\|_{H^\sigma}^2\|U\|_{Z'}.
\end{equation}

In addition, using \eqref{ka} and the bound \eqref{easy2},
\begin{equation*}
\begin{split}
|I_P+IV_P|&=2\Big|\sum_{j=1}^2\int_{\mathbb{R}^2}R_jPX\cdot \big[P\Lambda (|\nabla|\Lambda^{-1}X\cdot R_jY)-\Lambda(|\nabla|\Lambda^{-1}X\cdot R_jPY)\big]\,dx\Big|\\
&\lesssim \sum_{j=1}^2\Big|\int_{\mathbb{R}^2}R_jPX\cdot \Lambda \big[P|\nabla|\Lambda^{-1}X\cdot R_jY\big]\,dx\Big|+\|U\|_{H^\sigma}^2\|U\|_{Z'}\\
&\lesssim \sum_{j=1}^2\Big|\int_{\mathbb{R}^2}R_jPX\cdot P|\nabla| X\cdot R_jY\,dx\Big|+\|U\|_{H^\sigma}^2\|U\|_{Z'}.
\end{split}
\end{equation*}
Since $|\nabla|=-\partial_1R_1-\partial_2R_2$ and $\partial_jR_m=\partial_mR_j$ we can further estimate
\begin{equation}\label{ale3}
\begin{split}
|I_P+IV_P|&\lesssim \sum_{j,m=1}^2\Big|\int_{\mathbb{R}^2}R_jPX\cdot P\partial_mR_mX\cdot R_jY\,dx\Big|+\|U\|_{H^\sigma}^2\|U\|_{Z'}\\
&\lesssim \sum_{j,m=1}^2\Big|\int_{\mathbb{R}^2}\partial_jR_mPX\cdot PR_mX\cdot R_jY\,dx\Big|+\|U\|_{H^\sigma}^2\|U\|_{Z'}\\
&\lesssim \|U\|_{H^\sigma}^2\|U\|_{Z'}.
\end{split}
\end{equation}

Notice also that, for any $m,j\in\{1,2\}$
\begin{equation*}
\begin{split}
&\Big|\int_{\mathbb{R}^2}\partial_m|\nabla|\Lambda^{-1}X\cdot R_jPY\cdot \partial_mR_jPY\,dx\Big|\\
&\lesssim \Big|\int_{\mathbb{R}^2}\partial_m|\nabla|\Lambda^{-1}X\cdot P_{(-\infty,0]}R_jPY\cdot \partial_mR_jPY\,dx\Big|+\Big|\int_{\mathbb{R}^2}\partial_m|\nabla|\Lambda^{-1}X\cdot P_{[1,\infty)}R_jPY\cdot \partial_mR_jPY\,dx\Big|\\
&\lesssim \|\nabla U\|^2_{H^\sigma}\big(\|P_{(-\infty,0]}R_jPY\|_{L^\infty}+\|\partial_m|\nabla|\Lambda^{-1}X\|_{L^\infty}\big)\\
&\lesssim \|\nabla U\|^2_{H^\sigma}\|U\|_{H^3}.
\end{split}
\end{equation*}
Since $\|U\|_{H^3}$ is small, it follows that
\begin{equation*}
\sum_{P=D^\alpha,\,|\alpha|\leq\sigma}VII_P\geq 0.
\end{equation*}
The desired inequality \eqref{ale1} follows for $0\leq s\leq t\leq T^\varepsilon$, using also \eqref{ale2} and \eqref{ale3}. A simple continuity argument, using this inequality with $\sigma=3$ and the bound $\|U(t)\|_{Z'}\lesssim \|U(t)\|_{H^3}$, shows that the solution $U^\varepsilon$ can be extended up to time $1$, which completes the proof of the lemma.
\end{proof}

We estimate now differences of smooth solutions.

\begin{lemma}\label{enerdif}
(i) Assume $K\in[10,50]$, $I\subseteq\mathbb{R}$ is an interval, and $U,U'\in C(I:H^{K+2})$ satisfy
\begin{equation}\label{fgh7}
\begin{split}
&(\partial_t+i\Lambda)U=F+\frac{i}{4}\sum_{j=1}^2\Lambda R_j\left[\vert\nabla\vert\Lambda^{-1}(U+\overline{U})\cdot R_j(U-\overline{U})\right]+\frac{i}{8}\sum_{j=1}^2\vert\nabla\vert\left[R_j(U-\overline{U})\cdot R_j(U-\overline{U})\right],\\
&(\partial_t+i\Lambda)U'=F'+\frac{i}{4}\sum_{j=1}^2\Lambda R_j\left[\vert\nabla\vert\Lambda^{-1}(U'+\overline{U'})\cdot R_j(U'-\overline{U'})\right]+\frac{i}{8}\sum_{j=1}^2\vert\nabla\vert\left[R_j(U'-\overline{U'})\cdot R_j(U'-\overline{U'})\right]
\end{split}
\end{equation}
on $\mathbb{R}^2\times I$. Assume in addition the $F,F'\in C(I:H^K)$ and let
\begin{equation}\label{fgh2}
M_\sigma(t):=1+\|U(t)\|_{H^\sigma}+\|U'(t)\|_{H^\sigma},\qquad\sigma\in [3,K]\cap\mathbb{Z}.
\end{equation}
Let
\begin{equation*}
\begin{split}
&U=X+iY,\,\,U'=X'+iY',\,\,U^\delta=U'-U=X^\delta+iY^\delta,\\
&F=G+iH,\,\,F'=G'+iH',\,\,F^\delta=F'-F=G^\delta+iH^\delta.
\end{split}
\end{equation*}
For any $P=D^\alpha$, $|\alpha|\leq\sigma\in[3,K]\cap\mathbb{Z}$, let
\begin{equation}\label{fgh0}
E_P^\delta:=\int_{\mathbb{R}^2}(PX^\delta)^2+(PY^\delta)^2\,dx+\sum_{j=1}^2\int_{\mathbb{R}^2}|\nabla|\Lambda^{-1}X'\cdot (R_jPY^\delta)^2\,dx.
\end{equation}
Then, for any $t\in I$ for which $M_3(t)\leq 2$, we have
\begin{equation}\label{EEE}
|\partial_tE_P^\delta(t)|\lesssim (M_\sigma+\|F'(t)\|_{H^3})\|U^\delta(t)\|_{H^\sigma}^2+\|U\|_{H^{\sigma+1}}\|U^\delta\|_{H^{3/2}}\|U^\delta\|_{H^\sigma}+\|U^\delta\|_{H^\sigma}\|F^\delta\|_{H^\sigma}.
\end{equation}

(ii) If $U,U'\in C(I:H^3)$ are solutions of \eqref{fgh7} with $F=F'=0$ and $M_3(t)=1+\|U(t)\|_{H^3}+\|U'(t)\|_{H^3}\leq 2$ for some $t\in I$ then
\begin{equation}\label{EEE1}
|\partial_tE^\delta_{\mathrm{Id}}(t)|\lesssim \|U^\delta(t)\|_{L^2_x}^2.
\end{equation}
\end{lemma}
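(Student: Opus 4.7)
My plan is to follow the pattern of the proof of Lemma \ref{energylem1} with careful accounting of the new terms arising from taking a difference. The first step is to derive the evolution for $U^\delta = X^\delta + iY^\delta$. Denoting by $Q(\cdot,\cdot)$ a symmetric bilinear form reproducing the quadratic nonlinearity on the right of \eqref{fgh7}, the algebraic identity $Q(U',U') - Q(U,U) = Q(U^\delta, U') + Q(U, U^\delta)$ gives $(\partial_t + i\Lambda) U^\delta = F^\delta + Q(U^\delta, U') + Q(U, U^\delta)$. Separating real and imaginary parts yields a system for $(X^\delta, Y^\delta)$ of the same shape as \eqref{realsystem} (with $\varepsilon=0$ and with extra sources $G^\delta, H^\delta$), whose ``principal'' quadratic coefficient multiplying $R_j Y^\delta$ is $|\nabla|\Lambda^{-1} X'$. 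This is precisely why the definition \eqref{fgh0} uses $X'$ in the cubic correction: it is the unique choice that reproduces the $I_P + IV_P$ cancellation of Lemma \ref{energylem1} at the level of the difference.

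Next I would compute $\partial_t E_P^\delta$ and organize the result into analogs $I_P^\delta, \ldots, VI_P^\delta$ of the terms appearing in the proof of Lemma \ref{energylem1} (with $|\nabla|\Lambda^{-1}X'$ in place of $|\nabla|\Lambda^{-1}X$), plus three genuinely new contributions. The first new contribution is a source term arising from pairing $PX^\delta, PY^\delta$ with $PG^\delta, PH^\delta$, controlled by Cauchy--Schwarz by $\|U^\delta\|_{H^\sigma}\|F^\delta\|_{H^\sigma}$. The second is a term $VIII_P^\delta$ in which $\partial_t X'$ lands on $|\nabla|\Lambda^{-1}X'$ inside the cubic correction; substituting $\partial_t X' = \Lambda Y' - \sum_j \Lambda R_j(|\nabla|\Lambda^{-1}X' \cdot R_j Y') + G'$, the $\Lambda Y'$ piece is absorbed after integration by parts (moving the $\Lambda$ onto $(R_jPY^\delta)^2$), while the quadratic piece together with $G'$ is bounded by $(M_\sigma+\|F'\|_{H^3})\|U^\delta\|_{H^\sigma}^2$ via Lemma \ref{tech1.5} and the smallness of $M_3$. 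The third new contribution consists of extra quadratic factors inside the analogs of $I_P, \ldots, VI_P$ coming from the presence of $U'$ or $U$ (instead of $U^\delta$) in one slot of $Q(U^\delta,U')$ or $Q(U,U^\delta)$. The cancellation between $I_P^\delta$ and $IV_P^\delta$ proceeds verbatim as in \eqref{ale3}, and the $II, III, V, VI$ analogs are disposed of by the identities $-|\nabla| = \partial_1 R_1 + \partial_2 R_2$ and $\partial_j R_m = \partial_m R_j$ together with the product estimates of Lemma \ref{tech1.5}.

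The main subtlety lies in the contribution of $Q(U, U^\delta)$ when all $\sigma$ derivatives of $P = D^\alpha$ are distributed onto the high-order factor $U$. Since $U^\delta$ then carries no derivative to convert into an $L^\infty$ bound, one cannot absorb the term into $M_\sigma \|U^\delta\|_{H^\sigma}^2$; instead one uses the Sobolev embedding $H^{3/2}(\mathbb{R}^2)\hookrightarrow L^\infty$ to bound the offending piece by $\|U\|_{H^{\sigma+1}}\|U^\delta\|_{H^{3/2}}\|U^\delta\|_{H^\sigma}$, which is precisely the middle term of \eqref{EEE}. For part (ii), with $P = \mathrm{Id}$ and $F = F' = 0$, no derivatives need be distributed and this subtlety disappears; after the $I + IV$ cancellation every surviving term is trilinear with two factors of $U^\delta$ in $L^2$ and one factor of $U$ or $U'$ controlled in $L^\infty$ by $M_3 \leq 2$ (via $H^3 \hookrightarrow L^\infty$ in two dimensions), yielding \eqref{EEE1}. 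I expect the principal obstacle to be the careful bookkeeping of $VIII_P^\delta$: one must confirm that the $\Lambda Y'$ contribution really does absorb into $M_\sigma \|U^\delta\|_{H^\sigma}^2$ after integration by parts rather than forcing a dependence on $\|U'\|_{H^{\sigma+1}}$, which is not available in \eqref{EEE}.
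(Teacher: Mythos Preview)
Your approach is essentially the same as the paper's, and the overall strategy is correct: derive the system for $(X^\delta,Y^\delta)$, differentiate $E_P^\delta$, organize into the six pieces $I_P^\delta,\ldots,VI_P^\delta$ (the paper uses exactly this enumeration, with your ``$VIII_P^\delta$'' being its $III_P^\delta$ and your source term being its $VI_P^\delta$), and exploit the $I_P^\delta+IV_P^\delta$ cancellation together with the product estimates of Lemma~\ref{tech1.5}. Your identification of where the $\|U\|_{H^{\sigma+1}}\|U^\delta\|_{H^{3/2}}\|U^\delta\|_{H^\sigma}$ term must arise is also correct.

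One correction concerning your $VIII_P^\delta$ (the paper's $III_P^\delta$): you propose handling the $\Lambda Y'$ contribution ``after integration by parts (moving the $\Lambda$ onto $(R_jPY^\delta)^2$)'', and you flag this as the principal obstacle. In fact no integration by parts is needed and the obstacle does not arise. The term is
\[
\sum_{j}\int_{\mathbb{R}^2} |\nabla|\Lambda^{-1}(\Lambda Y')\cdot (R_jPY^\delta)^2\,dx=\sum_{j}\int_{\mathbb{R}^2} |\nabla|Y'\cdot (R_jPY^\delta)^2\,dx,
\]
since $|\nabla|\Lambda^{-1}\Lambda=|\nabla|$. Now $\||\nabla|Y'\|_{L^\infty}=\|\sum_m\partial_mR_mY'\|_{L^\infty}\lesssim\|Y'\|_{Z'}\lesssim\|U'\|_{H^3}\leq M_3$ by Lemma~\ref{tech1.5}(ii), so this piece is bounded directly by $M_3\|U^\delta\|_{H^\sigma}^2$. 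There is no risk of an $\|U'\|_{H^{\sigma+1}}$ dependence, and moving $\Lambda$ onto $(R_jPY^\delta)^2$ would have been the wrong move (it would cost a derivative on $U^\delta$, not on $U'$). With this clarification your plan matches the paper's proof.
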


\begin{proof}[Proof of Lemma \ref{enerdif}] The real variables $X^\delta,Y^\delta,X'$ satisfy the equations
\begin{equation}\label{fgh1}
\begin{split}
&\partial_tX^\delta=\Lambda Y^\delta+G^\delta-\sum_{j=1}^2\Lambda R_j(|\nabla|\Lambda^{-1}X'\cdot R_jY'-|\nabla|\Lambda^{-1}X\cdot R_jY),\\
&\partial_tY^\delta=-\Lambda X^\delta+H^\delta-(1/2)\sum_{j=1}^2|\nabla|(R_jY'\cdot R_jY'-R_jY\cdot R_jY),\\
&\partial_tX'=\Lambda Y'+G'-\sum_{j=1}^2\Lambda R_j(|\nabla|\Lambda^{-1}X'\cdot R_jY').
\end{split}
\end{equation}
Using the definition \eqref{fgh0} we calculate
\begin{equation*}
\partial_tE^\delta_P=I^\delta_P+II^\delta_P+III^\delta_P+IV^\delta_P+V^\delta_P+VI^\delta_P,
\end{equation*}
where
\begin{equation*}
I_P^\delta:=\int_{\mathbb{R}^2}-2PX^\delta\cdot \sum_{j=1}^2P\Lambda R_j(|\nabla|\Lambda^{-1}X'\cdot R_jY'-|\nabla|\Lambda^{-1}X\cdot R_jY)\,dx,
\end{equation*}
\begin{equation*}
II_P^\delta:=\int_{\mathbb{R}^2}-PY^\delta\cdot \sum_{j=1}^2P|\nabla|(R_jY'\cdot R_jY'-R_jY\cdot R_jY)\,dx,
\end{equation*}
\begin{equation*}
III_P^\delta:=\sum_{j=1}^2\int_{\mathbb{R}^2}|\nabla|\Lambda^{-1}\partial_tX'\cdot (R_jPY^\delta)^2\,dx,
\end{equation*}
\begin{equation*}
IV_P^\delta:=\sum_{j=1}^2\int_{\mathbb{R}^2}-|\nabla|\Lambda^{-1}X'\cdot 2R_jPY^\delta\cdot \Lambda R_jPX^\delta\,dx,
\end{equation*}
\begin{equation*}
V^\delta_P:=\sum_{j=1}^2\int_{\mathbb{R}^2}-|\nabla|\Lambda^{-1}X'\cdot R_jPY^\delta\cdot R_jP|\nabla|\big(\sum_{m=1}^2R_mY'\cdot R_mY'-R_mY\cdot R_mY\big)\,dx
\end{equation*}
\begin{equation*}
\begin{split}
VI^\delta_P&:=2\int_{\mathbb{R}^2}PX^\delta\cdot PG^\delta+PY^\delta\cdot PH^\delta\,dx+2\sum_{j=1}^2\int_{\mathbb{R}^2}|\nabla|\Lambda^{-1}X'\cdot R_jPY^\delta\cdot R_jPH^\delta\,dx.
\end{split}
\end{equation*}

Clearly,
\begin{equation*}
|VI^\delta_P|\lesssim M_3\|U^\delta\|_{H^\sigma}\|F^\delta\|_{H^\sigma}
\end{equation*}
and, using the last identity in \eqref{fgh1},
\begin{equation*}
|III^\delta_P|\lesssim (M_3^2+\|F'\|_{H^3})\|U^\delta\|^2_{H^\sigma}.
\end{equation*}
Also, using Lemma \ref{tech1.5} (i) and the identities $|\nabla|=-\partial_1R_1-\partial_2R_2$ and $\partial_jR_m=\partial_mR_j$,
\begin{equation*}
\begin{split}
|II^\delta_P|&\lesssim\sum_{j=1}^2\Big|\int_{\mathbb{R}^2}PY^\delta\cdot P|\nabla|(R_jY^\delta\cdot R_jY^\delta)\,dx\Big|+\sum_{j=1}^2\Big|\int_{\mathbb{R}^2}PY^\delta\cdot P|\nabla|(R_jY\cdot R_jY^\delta)\,dx\Big|\\
&\lesssim M_3\|U^\delta\|^2_{H^\sigma}+\|U\|_{H^\sigma}\|U^\delta\|_{H^\sigma}\|U^\delta\|_{H^3}+\sum_{j,m=1}^2\Big|\int_{\mathbb{R}^2}R_mPY^\delta\cdot \partial_mPR_jY^\delta\cdot R_jY^\delta\,dx\Big|\\
&+\sum_{j,m=1}^2\Big[\Big|\int_{\mathbb{R}^2}R_mPY^\delta\cdot \partial_mPR_jY^\delta\cdot R_jY\,dx\Big|+\Big|\int_{\mathbb{R}^2}R_mPY^\delta\cdot \partial_mPR_jY\cdot R_jY^\delta\,dx\Big|\Big]\\
&\lesssim M_\sigma\|U^\delta\|^2_{H^\sigma}+\|U\|_{H^{\sigma+1}}\|U^\delta\|_{H^\sigma}\|U^\delta\|_{H^{3/2}}.
\end{split}
\end{equation*}
Similarly,
\begin{equation*}
\begin{split}
&|V^\delta_P|\lesssim\sum_{j,m=1}^2\Big|\int_{\mathbb{R}^2}|\nabla|\Lambda^{-1}X'\cdot R_jPY^\delta\cdot \partial_jP\big(R_mY^\delta\cdot R_mY^\delta+2R_mY\cdot R_mY^\delta\big)\,dx\Big|\\
&\lesssim M_3^2\|U^\delta\|_{H^\sigma}^2+M_3\|U\|_{H^\sigma}\|U^\delta\|_{H^\sigma}\|U^\delta\|_{H^3}+\sum_{j,m=1}^2\Big[\Big|\int_{\mathbb{R}^2}|\nabla|\Lambda^{-1}X'\cdot R_jPY^\delta\cdot \partial_jPR_mY^\delta\cdot R_mY^\delta\,dx\Big|\\
&+\Big|\int_{\mathbb{R}^2}|\nabla|\Lambda^{-1}X'\cdot R_jPY^\delta\cdot \partial_jPR_mY^\delta\cdot R_mY\,dx\Big|+\Big|\int_{\mathbb{R}^2}|\nabla|\Lambda^{-1}X'\cdot R_jPY^\delta\cdot \partial_jPR_mY\cdot R_mY^\delta\,dx\Big|\Big]\\
&\lesssim M_3M_\sigma\|U^\delta\|^2_{H^\sigma}+M_3\|U\|_{H^{\sigma+1}}\|U^\delta\|_{H^\sigma}\|U^\delta\|_{H^{3/2}},
\end{split}
\end{equation*}
and
\begin{equation*}
\begin{split}
|I^\delta_P+IV_P^\delta|&=2\Big|\sum_{j=1}^2\int_{\mathbb{R}^2}R_jPX^\delta\cdot\Lambda\big[P(|\nabla|\Lambda^{-1}X'\cdot R_jY^\delta+|\nabla|\Lambda^{-1}X^\delta\cdot R_jY)-|\nabla|\Lambda^{-1}X'\cdot R_jPY^\delta\big]\,dx\Big|\\
&\lesssim M_3\|U^\delta\|_{H^\sigma}^2+M_\sigma\|U^\delta\|_{H^\sigma}\|U^\delta\|_{H^3}+\sum_{j=1}^2\Big|\int_{\mathbb{R}^2}R_jPX^\delta\cdot\Lambda\big[P|\nabla|\Lambda^{-1}X'\cdot R_jY^\delta\big]\,dx\Big|\\
&+\sum_{j=1}^2\Big|\int_{\mathbb{R}^2}R_jPX^\delta\cdot\Lambda\big[P|\nabla|\Lambda^{-1}X^\delta\cdot R_jY\big]\,dx\Big|+\sum_{j=1}^2\Big|\int_{\mathbb{R}^2}R_jPX^\delta\cdot\Lambda\big[|\nabla|\Lambda^{-1}X^\delta\cdot PR_jY\big]\,dx\Big|.
\end{split}
\end{equation*}
Using also the estimate
\begin{equation*}
\|\Lambda(f\cdot g)-\Lambda f\cdot g\|_{L^2}\lesssim \|f\|_{L^2}\|g\|_{H^3},
\end{equation*}
see the proof of Lemma \ref{tech1.5} (iii), it follows that
\begin{equation*}
\sum_{j=1}^2\Big|\int_{\mathbb{R}^2}R_jPX^\delta\cdot\Lambda\big[|\nabla|\Lambda^{-1}X^\delta\cdot PR_jY\big]\,dx\Big|\lesssim \|U^\delta\|_{H^\sigma}\|U^\delta\|_{H^{3/2}}\|U\|_{H^{\sigma+1}}+M_\sigma\|U^\delta\|_{H^\sigma}\|U^\delta\|_{H^3}.
\end{equation*}
In addition, for $\widetilde{Y}\in\{Y,Y^\delta\}$,
\begin{equation*}
\begin{split}
\sum_{j=1}^2\Big|\int_{\mathbb{R}^2}&R_jPX^\delta\cdot\Lambda\big[P|\nabla|\Lambda^{-1}X^\delta\cdot R_j\widetilde{Y}\big]\,dx\Big|\lesssim M_3\|U^\delta\|_{H^\sigma}^2+\sum_{j=1}^2\Big|\int_{\mathbb{R}^2}R_jPX^\delta\cdot P|\nabla|X^\delta\cdot R_j\widetilde{Y}\,dx\Big|\\
&\lesssim M_3\|U^\delta\|_{H^\sigma}^2+\sum_{j,m=1}^2\Big|\int_{\mathbb{R}^2}\partial_mR_jPX^\delta\cdot PR_mX^\delta\cdot R_j\widetilde{Y}\,dx\Big|\lesssim M_3\|U^\delta\|_{H^\sigma}^2.
\end{split}
\end{equation*}
Finally
\begin{equation*}
\begin{split}
\sum_{j=1}^2\Big|\int_{\mathbb{R}^2}R_jPX^\delta\cdot\Lambda\big[P|\nabla|\Lambda^{-1}X\cdot R_jY^\delta\big]\,dx\Big|\lesssim \|U^\delta\|_{H^\sigma}\|U^\delta\|_{H^{3/2}}\|U\|_{H^{\sigma+1}},
\end{split}
\end{equation*}
and it follows that
\begin{equation*}
|I^\delta_P+IV_P^\delta|\lesssim M_\sigma\|U^\delta\|_{H^\sigma}^2+\|U^\delta\|_{H^\sigma}\|U^\delta\|_{H^{3/2}}\|U\|_{H^{\sigma+1}}.
\end{equation*}
The inequality \eqref{EEE} follows from these estimates.

To prove \eqref{EEE1} we notice that the previous estimates can be improved if $P=\mathrm{Id}$ and $F=F'=0$. Indeed, similar arguments as before show that
\begin{equation*}
\begin{split}
&|VI^\delta_{\mathrm{Id}}|=0,\qquad |III^\delta_{\mathrm{Id}}|\lesssim M_3^2\|U^\delta\|^2_{L^2},\qquad |II^\delta_{\mathrm{Id}}|\lesssim M_3\|U^\delta\|_{L^2}^2,\\
&|V^\delta_{\mathrm{Id}}|\lesssim M_3^2\|U^\delta\|_{L^2}^2,\qquad |I^\delta_{\mathrm{Id}}+IV^\delta_{\mathrm{Id}}|\lesssim M_3\|U^\delta\|_{L^2}^2,
\end{split}
\end{equation*}
and the desired inequality follows.
\end{proof}

We can now complete the proof of the proposition, using the definitions, Lemma \ref{energylem1}, Lemma \ref{enerdif}, and the Bona--Smith argument \cite{BoSm}.

\begin{proof}[Proof of Proposition \ref{LocExLemma} (i), (ii)] Assume $M\in[3,40]\cap\mathbb{Z}$ is fixed and $U_0\in H^M$ is a given data satisfying
\begin{equation*}
\|U_0\|_{H^3}\leq \delta_0:=\delta_1,
\end{equation*}
where $\delta_1$ is the small constant in Lemma \ref{energylem1}. Using Lemma \ref{energylem1}, for any $k\in\mathbb{Z}_+$ and $\varepsilon>0$ we construct a solution $U_k^\varepsilon\in C([0,1]:H^{K+2})$ of the equation \eqref{AppEq}, with initial condition $U_k^\varepsilon(0)=P_{(-\infty,k]}U_0$. It follows from \eqref{ale1.1} and \eqref{ale1} that
\begin{equation}\label{fgh3}
\sup_{t\in[0,1]}\|U_k^\varepsilon(t)\|_{H^3}\lesssim \|U_0\|_{H^3},\qquad \sup_{t\in[0,1]}\|U_k^\varepsilon(t)\|_{H^{M+d}}\lesssim 2^{dk}\|U_0\|_{H^M},\qquad d\in\{0,1,2\}.
\end{equation}

We fix $k$ and apply now Lemma \ref{enerdif} to the solutions $U:=U_k^\varepsilon$ and $U':=U_k^{\varepsilon'}$, with $F:=\varepsilon\Delta U_k^\varepsilon$, $F':=\varepsilon'\Delta U_k^{\varepsilon'}$. It follows from \eqref{EEE} that
\begin{equation*}
\begin{split}
\|U_k^\varepsilon(t')-U_k^{\varepsilon'}(t')\|^2_{H^\sigma}\lesssim\|U_k^\varepsilon(t)-U_k^{\varepsilon'}(t)\|^2_{H^\sigma}+\int_{t}^{t'}&\big(\|U_k^\varepsilon(s)-U_k^{\varepsilon'}(s)\|^2_{H^\sigma}+(\varepsilon+\varepsilon')\|U_k^\varepsilon(s)-U_k^{\varepsilon'}(s)\|_{H^\sigma}\big)\\
&\times\big(1+\|U_k^\varepsilon(s)\|_{H^{\sigma+2}}+\|U_k^{\varepsilon'}(s)\|_{H^{\sigma+2}}\big)\,ds
\end{split}
\end{equation*}
for any $0\leq t\leq t'\leq 1$ and any $\sigma\in[3,K]\cap\mathbb{Z}$. Therefore, using \eqref{fgh3},
\begin{equation*}
\sup_{t\in [0,1]}\|U_k^\varepsilon(t)-U_k^{\varepsilon'}(t)\|^2_{H^{M+2}}\leq C(k)(\varepsilon+\varepsilon'),
\end{equation*}
which shows that $\lim_{\varepsilon\to 0}U_k^\varepsilon$ exists for any $k$ fixed. To summarize, for any $k\in\mathbb{Z}_+$ we have constructed a solution $U_k\in C([0,1]:H^{M+2})$ of the equation
\begin{equation*}
(\partial_t+i\Lambda)U_k=\frac{i}{4}\sum_{j=1}^2\Lambda R_j\big[|\nabla|\Lambda^{-1}(U_k+\overline{U_k})\cdot R_j(U_k-\overline{U_k})\big]+\frac{i}{8}\sum_{j=1}^2|\nabla|\big[R_j(U_k-\overline{U_k})\cdot R_j(U_k-\overline{U_k})\big],
\end{equation*}
with $U_k(0)=P_{(-\infty,k]}U_0$. Moreover, see \eqref{fgh3},
\begin{equation}\label{fgh4}
\sup_{t\in[0,1]}\|U_k(t)\|_{H^3}\lesssim \|U_0\|_{H^3},\qquad \sup_{t\in[0,1]}\|U_k(t)\|_{H^{M+d}}\lesssim 2^{dk}\|U_0\|_{H^M},\qquad d\in\{0,1,2\}.
\end{equation}

We show now that the sequence $\{U_k\}_{k\in\mathbb{Z}_+}$ is Cauchy in $C([0,1]:H^M)$. For this we apply Lemma \ref{enerdif} to the solutions $U:=U_k$ and $U':=U_{k'}$, $k+1\leq k'$, with $F=F'=0$. It follows from \eqref{EEE1} that
\begin{equation*}
\|U_{k'}(t')-U_k(t')\|^2_{L^2}\lesssim\|U_{k'}(t)-U_k(t)\|_{L^2}^2+\int_{t}^{t'}\|U_{k'}(s)-U_k(s)\|^2_{L^2}\,ds,
\end{equation*}
for any $0\leq t\leq t'\leq 1$. Since $\|U_{k'}(0)-U_k(0)\|_{L^2}\leq 2^{-3k}$ it follows that $\|U_{k'}(t)-U_k(t)\|_{L^2}\lesssim 2^{-3k}$ for any $t\in[0,1]$. By interpolation with \eqref{fgh4},
\begin{equation}\label{fgh5}
\sup_{t\in[0,1]}\|U_{k'}(t)-U_k(t)\|_{H^{3/2}}\lesssim 2^{-3k/2}.
\end{equation}
It follows now from \eqref{EEE} and the bound \eqref{fgh4} that
\begin{equation*}
\begin{split}
\|U_{k'}(t')-U_k(t')\|^2_{H^M}&\lesssim\|U_{k'}(t)-U_k(t)\|_{H^M}^2\\
&+(1+\|U_0\|_{H^M})\int_{t}^{t'}\|U_{k'}(s)-U_k(s)\|^2_{H^M}+2^{-k/2}\|U_{k'}(s)-U_k(s)\|_{H^M}\,ds,
\end{split}
\end{equation*}
for any $0\leq t\leq t'\leq 1$. Therefore
\begin{equation*}
\sup_{t\in[0,1]}\|U_{k'}(t)-U_k(t)\|_{H^M}\leq C(\|U_0\|_{H^M})[2^{-k/2}+\|U_{k'}(0)-U_k(0)\|_{H^M}.
\end{equation*}
This shows that the sequence $\{U_k\}_{k\in\mathbb{Z}_+}$ is Cauchy in $C([0,1]:H^M)$. 

Letting $U:=\lim_{k\to\infty}U_k$, we have constructed a solution $U\in C([0,1]:H^M)$ of \eqref{invp} such that $U(0)=U_0$ and
\begin{equation*}
 \sup_{t\in[0,1]}\|U(t)\|_{H^3}\lesssim \| U_0\|_{H^3}.
\end{equation*}
The inequality \eqref{EGrowth} follows from the corresponding inequality \eqref{ale1} satisfied uniformly by the functions $U_k^\varepsilon$. The uniqueness of the solution $U$ follows from \eqref{EEE1}.
\end{proof}

\begin{proof}[Proof of Proposition \ref{LocExLemma} (iii)] {\bf{Step 1.}} For any integer $J\geq 0$ and $f\in H^{N_0}$ we define
\begin{equation*}
\|f\|_{Z_J}:=\sup_{k\in\mathbb{Z}}(2^{k/10}+2^{10k})\Big[\|P_kf\|_{L^2}+\sum_{j\in\mathbb{Z}_+}2^{\min(j,2J-j)}\|\varphi_j(x)\cdot P_kf(x)\|_{L^2}\Big],
\end{equation*}
 and notice that
\begin{equation*}
 \|f\|_{Z_J}\leq\|f\|_{Z},\qquad \|f\|_{Z_J}\lesssim_J\|f\|_{H^{N_0}}.
\end{equation*}
It suffices to prove that for any $t,t'\in [0,T]$ with $0\leq t'-t\leq 1$, and any $J\in\mathbb{Z}_+$ we have
\begin{equation}\label{cu1}
 \|e^{it'\Lambda}U(t')-e^{it\Lambda}U(t)\|_{Z_J}\leq C\big(T,\sup_{t\in[0,T]}\|U(t)\|_{H^{N_0}},\|U_0\|_Z\big)\cdot |t'-t|(1+\sup_{s\in[t,t']}\|e^{is\Lambda}U(s)\|_{Z_J}).
\end{equation}
Indeed, assuming \eqref{cu1}, it follows easily that 
\begin{equation*}
\begin{split}
&\sup_{t\in[0,T]}\|e^{it\Lambda}U(t)\|_{Z_J}\leq C\big(T,\sup_{t\in[0,T]}\|U(t)\|_{H^{N_0}},\|U_0\|_Z\big),\\
&\|e^{it'\Lambda}U(t')-e^{it\Lambda}U(t)\|_{Z_J}\leq C\big(T,\sup_{t\in[0,T]}\|U(t)\|_{H^{N_0}},\|U_0\|_Z\big)\cdot |t'-t|, \qquad t,t'\in[0,T],
\end{split}
\end{equation*}
uniformly in $J$, and the desired conclusions follow by letting $J\to\infty$.

To prove \eqref{cu1} we define $V_\pm(t)=e^{\pm it\Lambda}U(t)$, as in section \ref{Bootstrap}, and use the formula \eqref{norm3},
\begin{equation}\label{cu1.5}
\begin{split}
&\widehat{V_+}(\xi,t')-\widehat{V_+}(\xi,t)\\
&=\sum_{(\mu,\nu)\in\{(+,+),(+,-),(-,-)\}}\int_{t}^{t'}\int_{\mathbb{R}^2}e^{is[\Lambda(\xi)-\mu\Lambda(\xi-\eta)-\nu\Lambda(\eta)]}m_{\mu\nu}(\xi,\eta)\widehat{V_\mu}(\xi-\eta,s)\widehat{V_\nu}(\eta,s)\,d\eta ds.
\end{split}
\end{equation}
Letting
\begin{equation*}
\widehat{A_{\mu\nu}}(\xi,s):=\int_{\mathbb{R}^2}e^{is[\Lambda(\xi)-\mu\Lambda(\xi-\eta)-\nu\Lambda(\eta)]}m_{\mu\nu}(\xi,\eta)\widehat{V_\mu}(\xi-\eta,s)\widehat{V_\nu}(\eta,s)\,d\eta,
\end{equation*}
it suffices to prove that for any $s\in[0,T]$ and $(\mu,\nu)\in\{(+,+),(+,-),(-,-)\}$
\begin{equation*}
\|A_{\mu\nu}(s)\|_{Z_J}\leq C\big(s,\|V_+(s)\|_{H^{N_0}},\|V_+(0)\|_Z\big)\cdot(1+\|V_+(s)\|_{Z_J}),\qquad \text{ uniformly in }J.
\end{equation*}
In view of the definition, it suffices to prove that for any $k\in\mathbb{Z}$ and $s\in[0,T]$
\begin{equation}\label{cu2}
\begin{split}
\|P_k A_{\mu\nu}(s)\|_{L^2}&+\sum_{j\geq 0}2^{\min(j,2J-j)}\|\varphi_j\cdot P_k A_{\mu\nu}(s)\|_{L^2}\\
&\leq C\big(s,\|V_+(s)\|_{H^{N_0}},\|V_+(0)\|_Z\big)\cdot(1+\|V_+(s)\|_{Z_J})(2^{k/10}+2^{10 k})^{-1}.
\end{split}
\end{equation}
We prove this bound in several steps: first we estimate the sum over small values of $j$, see \eqref{cu2.5}. Then we estimate the contribution of the very low frequencies of the inputs $V_\mu,V_\nu$, see \eqref{cu4}. Finally, we pass to the physical space to estimate the remaining contributions using localization, see \eqref{cu7} and \eqref{cu15}.

For simplicity of notation, in the rest of the proof we let $\widetilde{C}$ denote constants that may depend only on $s,\|V_+(s)\|_{H^{N_0}},\|V_+(0)\|_Z$, similar to the constant in the right-hand side of \eqref{cu2}. 

{\bf{Step 2.}} Notice first that
\begin{equation*}
\|\widehat{P_kA_{\mu\nu}}(s)\|_{L^\infty}\lesssim 2^{-(N_0-1)k_+}\|V_+(s)\|_{H^{N_0}}^2.
\end{equation*}
Therefore $\|P_kA_{\mu\nu}(s)\|_{L^2}\lesssim 2^k2^{-(N_0-1)k_+}\|V_+(s)\|_{H^{N_0}}^2$ and it follows that
\begin{equation}\label{cu2.5}
 \|P_k A_{\mu\nu}(s)\|_{L^2}+\sum_{j\leq\max(-41k/40,2k)+C(s)}2^j\|\varphi_j\cdot P_k A_{\mu\nu}(s)\|_{L^2}\leq \widetilde{C}(2^{k/10}+2^{10 k})^{-1}.
\end{equation}

{\bf{Step 3.}} To estimate the remaining sum
\begin{equation*}
\sum_{j\geq\max(-41k/40,2k)+C(s)}2^{\min(j,2J-j)}\|\varphi_j\cdot P_k A_{\mu\nu}(s)\|_{L^2},
\end{equation*}
we decompose
\begin{equation}\label{cu3.5}
\begin{split}
&P_k A_{\mu\nu}(s)=\sum_{(k_1,k_2)\in \mathcal{X}_k}A_{k,k_1,k_2}^{\mu\nu}(s),\\
&\widehat{A_{k,k_1,k_2}^{\mu\nu}}(\xi,s):=\int_{\mathbb{R}^2}e^{is[\Lambda(\xi)-\mu\Lambda(\xi-\eta)-\nu\Lambda(\eta)]}\varphi_k(\xi)m_{\mu\nu}(\xi,\eta)\widehat{P_{k_1}V_\mu}(\xi-\eta,s)\widehat{P_{k_2}V_\nu}(\eta,s)\,d\eta,
\end{split}
\end{equation}
where 
\begin{equation*}
\begin{split}
&\mathcal{X}^1_k:=\{(k_1,k_2)\in\mathbb{Z}\times\mathbb{Z}:|\max(k_1,k_2)-k|\leq 4\},\\
&\mathcal{X}^2_k:=\{(k_1,k_2)\in\mathbb{Z}\times\mathbb{Z}:\max(k_1,k_2)\geq k+4\text{ and }|k_1-k_2|\leq 4\},\\
&\mathcal{X}_k:=\mathcal{X}_k^1\cup\mathcal{X}_k^2.
\end{split}
\end{equation*}
Since
\begin{equation*}
|m_{\mu\nu}(\xi,\eta)|\lesssim |\xi|+|\eta|+|\xi-\eta|, 
\end{equation*}
see \eqref{ms1}--\eqref{ms3}, it follows from \eqref{cu1.5} that
\begin{equation}\label{cu7.5}
\begin{split}
\|\widehat{P_l V_{\pm}}(s)\|_{L^\infty}&\lesssim \|\widehat{P_l V_{\pm}}(0)\|_{L^\infty}+(1+s)\Big[2^l\|P_{[l-4,l+4]}V_+(s)\|_{L^2}\|P_{(-\infty,l+4]}V_+(s)\|_{L^2}\\
&+\sum_{(k_1,k_2)\in\mathcal{X}_l^2}(2^{k_1}+2^{k_2}) \|P_{k_1}V_+(s)\|_{L^2}\|P_{k_2}V_+(s)\|_{L^2}\Big]\\
&\leq \widetilde{C}(2^{l/10}+2^{10l})^{-1}.
\end{split}
\end{equation}
Therefore
\begin{equation*}
\begin{split}
\|A_{k,k_1,k_2}^{\mu\nu}(s)\|_{L^2}&\lesssim 2^{\max(k,k_1,k_2)}\min\big(\|\widehat{P_{k_1}V_\mu}(s)\|_{L^1}\|\widehat{P_{k_2}V_\nu}(s)\|_{L^2},\|\widehat{P_{k_1}V_\mu}(s)\|_{L^2}\|\widehat{P_{k_2}V_\nu}(s)\|_{L^1}\big)\\
&\leq \widetilde{C}2^{\max(k_1,k_2)}(1+2^{\max(k_1,k_2)})^{-N_0}2^{(19/10)\min(k_1,k_2)}.
\end{split}
\end{equation*}
Therefore
\begin{equation}\label{cu4}
\sum_{j\geq \max(-41k/40,2k)}\sum_{(k_1,k_2)\in \mathcal{X}_k,\,\min(k_1,k_2)\leq-2j/3}2^j\|A_{k,k_1,k_2}^{\mu\nu}(s)\|_{L^2}\leq \widetilde{C}2^{-10 k_+}.
\end{equation}

{\bf{Step 4.}} It remains to bound the contribution
\begin{equation*}
\sum_{j\geq \max(-41k/40,2k)+C(s)}\sum_{(k_1,k_2)\in \mathcal{X}_k,\,\min(k_1,k_2)\geq-2j/3}2^{\min(j,2J-j)}\|\varphi_j\cdot A_{k,k_1,k_2}^{\mu\nu}(s)\|_{L^2}.
\end{equation*}
For this we write, using \eqref{cu3.5},
\begin{equation*}
\begin{split}
&A_{k,k_1,k_2}^{\mu\nu}(x,s)=\int_{\mathbb{R}^2\times\mathbb{R}^2}P_{k_1}V_\mu(y,s)P_{k_2}V_\nu(z,s)K_{k,k_1,k_2}^{\mu\nu}(x,y,z)\,dydz,\\
&K_{k,k_1,k_2}^{\mu\nu}(x,y,z):=c\int_{\mathbb{R}^2\times\mathbb{R}^2}e^{is[\Lambda(\xi)-\mu\Lambda(\xi-\eta)-\nu\Lambda(\eta)]}e^{i(x\cdot\xi-y\cdot(\xi-\eta)-z\cdot\eta)}m_{k,k_1,k_2}^{\mu\nu}(\xi,\eta)\,d\xi d\eta,\\
&m_{k,k_1,k_2}^{\mu\nu}(\xi,\eta):=m_{\mu\nu}(\xi,\eta)\varphi_k(\xi)\varphi_{[k_1-1,k_1+1]}(\xi-\eta)\varphi_{[k_2-1,k_2+1]}(\eta).
\end{split}
\end{equation*}
Since
\begin{equation*}
\sup_{\xi\in\mathbb{R}^2}|D^\alpha \Lambda(\xi)|\lesssim_{|\alpha|}1,\qquad |\alpha|\geq 1,
\end{equation*}
it follows by integration by parts that
\begin{equation}\label{cu5}
\begin{split}
&|K_{k,k_1,k_2}^{\mu\nu}(x,y,z)|\lesssim (1+2^{4k_1}+2^{4k_2})(|x-y|+|x-z|)^{-20}\\
\text{if }&j\geq C(s),\,\,\min(k,k_1,k_2)\geq -40j/41,\,\,|x-y|+|x-z|\geq 2^{j-4},
\end{split}
\end{equation}
provided that the constant $C(s)$ is fixed sufficiently large. Letting
\begin{equation*}
\begin{split}
&B_{j,k,k_1,k_2}^{\mu\nu}(x,s):=\int_{\mathbb{R}^2\times\mathbb{R}^2}V^\mu_{j,k_1}(y,s)V^\nu_{j,k_2}(z,s)K_{k,k_1,k_2}^{\mu\nu}(x,y,z)\,dydz,\\
&V^\mu_{j,l}(x,s):=\varphi_{[j-4,j+4]}(x)\cdot P_{l}V_\mu(x,s),
\end{split}
\end{equation*}
it follows from \eqref{cu5} that
\begin{equation*}
\|\varphi_j\cdot [A_{k,k_1,k_2}^{\mu\nu}(s)-B_{j,k,k_1,k_2}^{\mu\nu}(s)]\|_{L^2}\lesssim 2^{-10j}(1+2^{4k_1}+2^{4k_2})\|P_{k_1}V_+(s)\|_{L^2}\|P_{k_2}V_+(s)\|_{L^2}.
\end{equation*}
Since $\|P_{l}V_+(s)\|_{L^2}\lesssim 2^{-N_0l_+}\|V_+(s)\|_{H^{N_0}}$, $l\in\mathbb{Z}$, it follows that
\begin{equation}\label{cu7}
\sum_{j\geq \max(-41k/40,2k)+C(s)}\sum_{(k_1,k_2)\in \mathcal{X}_k,\,\min(k_1,k_2)\geq-2j/3}2^j\|\varphi_j\cdot [A_{k,k_1,k_2}^{\mu\nu}(s)-B_{j,k,k_1,k_2}^{\mu\nu}(s)]\|_{L^2}\leq \widetilde{C}2^{-10 k_+}.
\end{equation}

{\bf{Step 5.}} Finally we rewrite $B_{j,k,k_1,k_2}^{\mu\nu}(s)$ in the frequency space,
\begin{equation*}
\widehat{B_{j,k,k_1,k_2}^{\mu\nu}}(\xi,s)=\int_{\mathbb{R}^2}e^{is[\Lambda(\xi)-\mu\Lambda(\xi-\eta)-\nu\Lambda(\eta)]}m^{\mu\nu}_{k,k_1,k_2}(\xi,\eta)\widehat{V^\mu_{j,k_1}}(\xi-\eta,s)\widehat{V_{j,k_2}^\nu}(\eta,s)\,d\eta,
\end{equation*}
and estimate
\begin{equation*}
\|\widehat{B_{j,k,k_1,k_2}^{\mu\nu}}(s)\|_{L^2}\lesssim 2^{\max(k_1,k_2)}2^{\min(k_1,k_2)}\|\widehat{V^\mu_{j,k_1}}(s)\|_{L^2}\|\widehat{V_{j,k_2}^\nu}(s)\|_{L^2}.
\end{equation*}
Therefore
\begin{equation}\label{cu12}
\begin{split}
\sum_{j\geq \max(-41k/40,2k)+C(s)}&\sum_{(k_1,k_2)\in \mathcal{X}_k,\,\min(k_1,k_2)\geq-2j/3}2^{\min(j,2J-j)}\|\varphi_j\cdot B_{j,k,k_1,k_2}^{\mu\nu}(s)\|_{L^2}\\
&\lesssim\sum_{(k_1,k_2)\in \mathcal{X}_k,\,k_1\leq k_2}\sum_{j\geq 10}2^{\min(j,2J-j)}2^{k_1+k_2}\|V^\pm_{j,k_1}(s)\|_{L^2}\|V_{j,k_2}^\pm(s)\|_{L^2}.
\end{split}
\end{equation}
We estimate the sum in the right-hand side of \eqref{cu12} in two different ways, depending on the relative sizes of $k,k_1,k_2$. If\footnote{By convention, here and in the rest of the paper, $[a,b]:=\emptyset$ if $a>b$.} $k_1=\min(k_1,k_2)\notin[100-10k,k-10]$ then we estimate $\|V^\pm_{j,k_1}(s)\|_{L^2}\leq \widetilde{C}2^{-N_0\max(k_1,0)}$, and the corresponding sum in the right-hand side of \eqref{cu12} is bounded by
\begin{equation*}
\sum_{(k_1,k_2)\in \mathcal{X}_k,\,k_1\leq k_2,\,k_1\notin[100-10k,k-10]}\widetilde{C}2^{k_1+k_2}2^{-N_0\max(k_1,0)}(2^{k_2/10}+2^{10k_2})^{-1}\|V_+(s)\|_{Z_J}\leq \widetilde{C}\|V_+(s)\|_{Z_J}2^{-10 k_+}.
\end{equation*}
On the other hand, if $k_1=\min(k_1,k_2)\in[100-10k,k-10]$ (in particular $k\geq 10$, $|k_2-k|\leq 4$) then we use $\|V^\pm_{j,k_2}(s)\|_{L^2}\leq \widetilde{C}2^{-N_0k}$, and the corresponding sum in the right-hand side of \eqref{cu12} is bounded by
\begin{equation*}
\sum_{(k_1,k_2)\in \mathcal{X}_k,\,k_1\leq k_2,\,k_1\in[100-10k,k-10]}\widetilde{C}2^{k_1+k_2}2^{-N_0k}(2^{k_1/10}+2^{10k_1})^{-1}\|V_+(s)\|_{Z_J}\leq \widetilde{C}\|V_+(s)\|_{Z_J}2^{-10 k}.
\end{equation*}
Therefore
\begin{equation}\label{cu15}
\begin{split}
\sum_{j\geq \max(-41k/40,2k)+C(s)}\sum_{(k_1,k_2)\in \mathcal{X}_k,\,\min(k_1,k_2)\geq-2j/3}2^{\min(j,2J-j)}\|\varphi_j\cdot B_{j,k,k_1,k_2}^{\mu\nu}(s)\|_{L^2}\\
\leq\widetilde{C}\|V_+(s)\|_{Z_J}2^{-10 k_+},
\end{split}
\end{equation}
which completes the proof.
\end{proof}

\section{Proof of Proposition \ref{Norm}}\label{Bootstrap}

Recall first some of the notation from section \ref{section2}: for $f\in H^1(\mathbb{R}^2)$ we define $|\nabla|f$, $R_jf$, $\Lambda f$, and $P_kf$ by multiplication with the Fourier multipliers 
\begin{equation}\label{not1}
\xi\to|\xi|,\qquad \xi\to i\xi_j/|\xi|,\qquad\xi\to\Lambda(\xi)=\sqrt{a|\xi|^2+b},\qquad \xi\to\varphi_k(\xi)=\varphi(|\xi|/2^k)-\varphi(|\xi|/2^{k-1})
\end{equation}
respectively, where $a,b\in(0,\infty)$, $j\in\{1,2\}$, $k\in\mathbb{Z}$, and $\varphi:\mathbb{R}\to[0,1]$ is an even smooth function supported in $[-8/5,8/5]$ and equal to $1$ in $[-5/4,5/4]$. For any set $I\in\mathbb{R}$ let
\begin{equation*}
\varphi_I=\sum_{m\in I\cap\mathbb{Z}}\varphi_m.
\end{equation*}
For any $k\in\mathbb{Z}$ let 
\begin{equation*}
k_+:=\max(k,0).
\end{equation*}
Recall also the definition of our main spaces of ``smooth'' solutions $X^N_T(\mathbb{R}^2\times[0,T])$, $Y^{N_0}(\mathbb{R}^2)$, and $Z(\mathbb{R}^2)$ (see \eqref{sec4} and \eqref{sec5}), where $T\geq 1$, $N=30$ and $N_0=20$. 

The rest of the section is concerned with the proof of Proposition \ref{Norm}, which we recall below:

\begin{proposition}\label{MainBootstrap}
Assume that $U\in X^N_T$ is a solution of the equation
\begin{equation}\label{bo1}
(\partial_t+i\Lambda)U=\frac{i}{4}\sum_{j=1}^2\Lambda R_j\big[|\nabla|\Lambda^{-1}(U+\overline{U})\cdot R_j(U-\overline{U})\big]+\frac{i}{8}\sum_{j=1}^2|\nabla|\big[R_j(U-\overline{U})\cdot R_j(U-\overline{U})\big]
\end{equation}
with the property that
\begin{equation}\label{bo2}
\|U\|_{X^N_T}\leq\varepsilon_0\leq 1.
\end{equation}
Then
\begin{equation}\label{bo3}
\sup_{t\in[0,T]}\|e^{it\Lambda}U(t)-U(0)\|_{Y^{N_0}}\lesssim \varepsilon^2_0.
\end{equation}
\end{proposition}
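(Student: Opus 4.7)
Let $V(t):=e^{it\Lambda}U(t)$ denote the linear profile, so that the conclusion \eqref{bo3} becomes $\|V(t)-V(0)\|_{Y^{N_0}}\lesssim\varepsilon_0^2$. Applying $e^{it\Lambda}$ to \eqref{bo1} and integrating Duhamel in time produces, on the Fourier side, exactly the expression \eqref{cu1.5}:
\begin{equation*}
\widehat{V}(\xi,t)-\widehat{V}(\xi,0)=\sum_{(\mu,\nu)}\int_0^t\!\int_{\mathbb{R}^2}e^{is\Phi_{\mu\nu}(\xi,\eta)}m_{\mu\nu}(\xi,\eta)\widehat{V_\mu}(\xi-\eta,s)\widehat{V_\nu}(\eta,s)\,d\eta\,ds,
\end{equation*}
where $\Phi_{\mu\nu}(\xi,\eta)=\Lambda(\xi)-\mu\Lambda(\xi-\eta)-\nu\Lambda(\eta)$ and $(\mu,\nu)\in\{(+,+),(+,-),(-,-)\}$. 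Because $\Lambda(\xi)=\sqrt{a|\xi|^2+b}$ with $b>0$, the phases $\Phi_{\mu\nu}$ are bounded away from zero uniformly in $(\xi,\eta)$ (with only mild polynomial degeneracy at infinity that is absorbed by the decay of $\widehat{V}$). This non-resonance allows a standard normal-form integration by parts in $s$, writing $e^{is\Phi}=\partial_s(e^{is\Phi}/(i\Phi))$, which replaces the quadratic oscillatory integrand by a bilinear boundary term $B(V(t),V(t))-B(V(0),V(0))$ with symbol $m_{\mu\nu}/(i\Phi_{\mu\nu})$, plus a cubic integral $\int_0^t T(V,V,V)(s)\,ds$ obtained by substituting \eqref{bo1} for $\partial_sV$.

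\textbf{The $H^{N_0}$ component.} The $H^{N_0}$-piece of $\|V(t)-V(0)\|_{Y^{N_0}}$ is estimated directly from the normal-form identity. The boundary bilinear term is controlled by $\|V\|_{Y^{N_0}}^2\lesssim\varepsilon_0^2$ using Plancherel and the standard paraproduct decomposition, since the symbol $m_{\mu\nu}/\Phi_{\mu\nu}$ is a Coifman--Meyer type multiplier in the relevant frequency regimes (and the $Z$ component of $Y^{N_0}$ handles the low-frequency behavior precisely because of the choice \eqref{sec5}). For the cubic integrand, we place one of the three factors in $L^\infty$ via \eqref{basic} and \eqref{keydisperse2}, namely $\|U(s)\|_{Z'}\lesssim(1+s)^{-1}\|V(s)\|_Z$, and keep the other two factors in $L^2$ (one at high regularity, the other at $H^{N_0}$); the time integral $\int_0^t(1+s)^{-2}(1+s)^\delta\,ds$ is uniformly bounded and produces a contribution of size $\varepsilon_0^3$, comfortably $\lesssim\varepsilon_0^2$.

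\textbf{The $Z$ component and its weighted piece.} The heart of the proof is the $Z$-bound, and within it, the weighted sum $\sum_{j\geq 0}2^j\|\varphi_j(x)\cdot P_k[V(t)-V(0)]\|_{L^2}$ at large dyadic radii $2^j$. Following the three-region strategy outlined in the introduction, after applying the normal form we decompose the time integral $\int_0^t$ into $[0,2^{j(1-\delta')}]$, $[2^{j(1-\delta')},2^{j(1+\delta')}]$, and $[2^{j(1+\delta')},t]$. On the first interval a finite-speed-of-propagation argument (analogous in structure to the integration-by-parts in $\xi$ performed in \eqref{cu5}, but now applied to the cubic integrand) shows that only contributions localized within distance $\ll 2^j$ of the observation point $|x|\sim 2^j$ matter, and these are controlled by $\varepsilon_0^2$ times the initial $Z$-norm. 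On the third interval the decay $(1+s)^{-1}$ from \eqref{keydisperse2} applied to two of the three cubic factors yields a time integral $\int_{2^{j(1+\delta')}}^t(1+s)^{-2}\,ds\lesssim 2^{-j(1+\delta')}$, which absorbs the weight $2^j$ with room to spare.

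\textbf{The critical window.} The main obstacle is the critical window $s\in[2^{j(1-\delta')},2^{j(1+\delta')}]$, where $|x|\sim s$ and the various decay rates become comparable, so crude Hölder bounds fail. Here we exploit two structural features simultaneously. First, the trilinear operator $T$ arising from the double application of the normal form satisfies a Leibniz-type identity with the position operator $x$ (the "commutation" mentioned in the footnote), so the weight $2^j$ can be distributed across the three inputs, each of which is then controlled in the weighted $xL^2$ norm already built into $Z$. Second, the phases generated by the normal-form reduction are "hyperbolic" in the sense that $\nabla_\eta\Phi_{\mu\nu}$ cannot vanish together with $\Phi_{\mu\nu}$ itself on the space-time locus $|x|\sim s$, so a non-stationary phase integration by parts in $\eta$ gains a factor $2^{-j\delta'}$ from the oscillation. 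The null structure of $m_{\mu\nu}$ at low frequency (required to tame the Riesz transforms near the origin) is essential for the analogous bound on small-frequency interactions. Combining these cancellations with the high-regularity $H^N$-norm, which is allowed to grow as $(1+s)^\delta$, and choosing $\delta\ll\delta'$, closes the bootstrap with total bound $\varepsilon_0^2$.
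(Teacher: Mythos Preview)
Your outline captures the broad architecture correctly --- normal form to pass from quadratic to cubic nonlinearity, then a space--time decomposition to control the $Z$-weighted piece --- and that is indeed what the paper does. But there is a genuine structural gap in your treatment of the cubic terms.

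After the first normal form, the cubic integrand carries a \emph{trilinear} phase $\Phi_{\mu\nu\sigma}(\xi,\eta,\chi)=\Lambda(\xi)-\mu\Lambda(\xi-\eta)-\nu\Lambda(\eta-\chi)-\sigma\Lambda(\chi)$ with $(\mu,\nu,\sigma)\in\{(+,+,+),(+,+,-),(+,-,-),(-,-,-)\}$, and these four phases behave very differently. Three of them --- all except $(+,+,-)$ --- are uniformly bounded below (up to polynomial growth), so the paper applies a \emph{second} normal form there, integrating by parts in $s$ once more to obtain quartic remainders that are estimated directly. The phase $\Phi_{++-}$, however, vanishes on a large set (take for instance $\xi=\eta$, $\chi=0$), so no second normal form is available; this is the genuinely resonant piece, and it is handled by the physical-space finite-speed argument and the non-stationary phase analysis of Lemma~\ref{KerLem}. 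Your ``critical window'' paragraph blurs this distinction: you speak of ``double application of the normal form'' and of the hyperbolicity of the \emph{bilinear} phase $\Phi_{\mu\nu}$, but neither is the correct mechanism for the resonant cubic term. The actual argument for $(+,+,-)$ (Lemma~\ref{KerLem}, Step~4) shows that on the critical region the \emph{full gradient} $\nabla_{\xi,v,\xi_3}$ of the trilinear phase cannot vanish, and exploits this by integrating by parts in whichever variable is non-stationary --- a multi-variable non-stationary phase, not an $\eta$-derivative of a bilinear phase.

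A smaller but concrete slip: in your $H^{N_0}$ bound you claim $(1+s)^{-2}$ decay by placing \emph{one} factor in $L^\infty$. One factor only buys $(1+s)^{-1}$, which is not integrable; you need two factors in $L^\infty$ (Lemma~\ref{tech3} is set up precisely to allow $p_1=p_2=\infty$, $p_3=r=2$), and this is what the paper does in Step~1 of Lemma~\ref{ble2}.
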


We use first the method of normal forms to derive several new formulas describing the solution $U$. Then we use these formulas to prove the desired estimate \eqref{bo3}.

\subsection{Renormalizations}\label{renorm}

Assume that $U\in X^N_T$ satisfies the hypothesis of Proposition \ref{MainBootstrap} and let
\begin{equation*}
U_+:=U,\qquad U_-:=\overline{U}.
\end{equation*}
Let $\widehat{U_+},\widehat{U_-}$ denote the spatial Fourier transforms,
\begin{equation*}
\widehat{U_\mu}(\xi,t):=\int_{\mathbb{R}^2}U_\mu(x,t)e^{-ix\cdot\xi}\,dx,\qquad \mu\in\{+,-\},\,(\xi,t)\in\mathbb{R}^2\times [0,T].
\end{equation*}
The equation \eqref{bo1} gives
\begin{equation}\label{norm2}
[\partial_t+i\Lambda(\xi)]\widehat{U_+}(\xi,t)=\sum_{(\mu,\nu)\in\{(+,+),(+,-),(-,-)\}}\int_{\mathbb{R}^2}m_{\mu\nu}(\xi,\eta)\widehat{U_\mu}(\xi-\eta,t)\widehat{U_\nu}(\eta,t)\,d\eta,
\end{equation}
where
\begin{equation}\label{ms1}
m_{++}(\xi,\eta):=c_0i\Big[\frac{-\Lambda(\xi)|\xi-\eta|(\xi\cdot\eta)}{4\Lambda(\xi-\eta)|\xi||\eta|}+\frac{-|\xi|(\eta\cdot (\xi-\eta))}{8|\eta||\xi-\eta|}\Big],
\end{equation}
\begin{equation}\label{ms2}
m_{+-}(\xi,\eta):=c_0i\Big[\frac{-\Lambda(\xi)|\eta|(\xi\cdot(\xi-\eta))}{4\Lambda(\eta)|\xi||\xi-\eta|}+\frac{\Lambda(\xi)|\xi-\eta|(\xi\cdot\eta)}{4\Lambda(\xi-\eta)|\xi||\eta|}+\frac{|\xi|(\eta\cdot (\xi-\eta))}{4|\eta||\xi-\eta|}\Big],
\end{equation}
\begin{equation}\label{ms3}
m_{--}(\xi,\eta):=c_0i\Big[\frac{\Lambda(\xi)|\xi-\eta|(\xi\cdot\eta)}{4\Lambda(\xi-\eta)|\xi||\eta|}+\frac{-|\xi|(\eta\cdot (\xi-\eta))}{8|\eta||\xi-\eta|}\Big].
\end{equation}
Letting
\begin{equation*}
\begin{split}
&V_\mu(t):=e^{\mu it\Lambda}U_\mu(t),\qquad \widehat{U_\mu}(\xi,t)=\widehat{V_\mu}(\xi,t)e^{-\mu it\Lambda(\xi)},
\end{split}
\end{equation*}
for $\mu\in\{+.-\}$, the equation \eqref{norm2} is equivalent to
\begin{equation}\label{norm3}
\frac{d}{dt}[\widehat{V_+}(\xi,t)]=\sum_{(\mu,\nu)\in\{(+,+),(+,-),(-,-)\}}\int_{\mathbb{R}^2}e^{it[\Lambda(\xi)-\mu\Lambda(\xi-\eta)-\nu\Lambda(\eta)]}m_{\mu\nu}(\xi,\eta)\widehat{V_\mu}(\xi-\eta,t)\widehat{V_\nu}(\eta,t)\,d\eta.
\end{equation}
Therefore, letting
\begin{equation}\label{keyphi} 
\Phi_{\mu\nu}(\xi,\eta):=\Lambda(\xi)-\mu\Lambda(\xi-\eta)-\nu\Lambda(\eta),\qquad (\mu,\nu)\in\{(+,+),(+,-),(-,-)\},
\end{equation}
it follows that
\begin{equation*}
\widehat{V_+}(\xi,t)-\widehat{V_+}(\xi,0)=\sum_{(\mu,\nu)\in\{(+,+),(+,-),(-,-)\}}\int_0^t\int_{\mathbb{R}^2}e^{is\Phi_{\mu\nu}(\xi,\eta)}m_{\mu\nu}(\xi,\eta)\widehat{V_\mu}(\xi-\eta,s)\widehat{V_\nu}(\eta,s)\,d\eta ds.
\end{equation*}
It follows from the definition that, for any $(\mu,\nu)\in\{(+,+),(+,-),(-,-)\}$,
\begin{equation*}
|\Phi_{\mu\nu}(\xi,\eta)|^{-1}\lesssim 1+\min(|\xi|,|\eta|,|\xi-\eta|).
\end{equation*}
Therefore, we can integrate by parts in $s$ to conclude that, for any $t\in[0,T]$,
\begin{equation*}
\begin{split}
\widehat{V_+}(\xi,t)-\widehat{V_+}(\xi,0)&=\sum_{(\mu,\nu)\in\{(+,+),(+,-),(-,-)\}}-\int_0^t\int_{\mathbb{R}^2}e^{is\Phi_{\mu\nu}(\xi,\eta)}\frac{m_{\mu\nu}(\xi,\eta)}{i\Phi_{\mu\nu}(\xi,\eta)}\frac{d}{ds}\big[\widehat{V_\mu}(\xi-\eta,s)\widehat{V_\nu}(\eta,s)\big]\,d\eta ds\\
&+\sum_{(\mu,\nu)\in\{(+,+),(+,-),(-,-)\}}\int_{\mathbb{R}^2}e^{it\Phi_{\mu\nu}(\xi,\eta)}\frac{m_{\mu\nu}(\xi,\eta)}{i\Phi_{\mu\nu}(\xi,\eta)}\widehat{V_\mu}(\xi-\eta,t)\widehat{V_\nu}(\eta,t)\,d\eta\\
&-\sum_{(\mu,\nu)\in\{(+,+),(+,-),(-,-)\}}\int_{\mathbb{R}^2}\frac{m_{\mu\nu}(\xi,\eta)}{i\Phi_{\mu\nu}(\xi,\eta)}\widehat{V_\mu}(\xi-\eta,0)\widehat{V_\nu}(\eta,0)\,d\eta.
\end{split}
\end{equation*}
For $t\in[0,T]$ let
\begin{equation*}
\widehat{W}(\xi,t):=\widehat{V_+}(\xi,t)-\sum_{(\mu,\nu)\in\{(+,+),(+,-),(-,-)\}}\int_{\mathbb{R}^2}e^{it\Phi_{\mu\nu}(\xi,\eta)}\frac{m_{\mu\nu}(\xi,\eta)}{i\Phi_{\mu\nu}(\xi,\eta)}\widehat{V_\mu}(\xi-\eta,t)\widehat{V_\nu}(\eta,t)\,d\eta.
\end{equation*}
It follows that
\begin{equation}\label{norm4}
\begin{split}
&\widehat{W}(\xi,t)-\widehat{W}(\xi,0)=\sum_{(\mu,\nu)\in\{(+,+),(+,-),(-,-)\}}i\int_0^tH_{\mu\nu}(\xi,s)\,ds,\\
&H_{\mu\nu}(\xi,s):=\int_{\mathbb{R}^2}e^{is\Phi_{\mu\nu}(\xi,\eta)}\frac{m_{\mu\nu}(\xi,\eta)}{\Phi_{\mu\nu}(\xi,\eta)}\frac{d}{ds}\big[\widehat{V_\mu}(\xi-\eta,s)\widehat{V_\nu}(\eta,s)\big]\,d\eta.
\end{split}
\end{equation}

To calculate the functions $H_{\mu\nu}$ we use the formulas, see \eqref{norm3},
\begin{equation}\label{norm14}
\begin{split}
&\frac{d}{ds}[\widehat{V_+}(\xi,s)]=\sum_{(\mu,\nu)\in\{(+,+),(+,-),(-,-)\}}e^{is\Lambda(\xi)}\int_{\mathbb{R}^2}m_{\mu\nu}(\xi,\chi)\widehat{U_\mu}(\xi-\chi,s)\widehat{U_\nu}(\chi,s)\,d\chi,\\
&\frac{d}{ds}[\widehat{V_-}(\xi,s)]=\sum_{(\mu,\nu)\in\{(-,-),(-,+),(+,+)\}}e^{-is\Lambda(\xi)}\int_{\mathbb{R}^2}m'_{\mu\nu}(\xi,\chi)\widehat{U_\mu}(\xi-\chi,s)\widehat{U_\nu}(\chi,s)\,d\chi.
\end{split}
\end{equation}
where
\begin{equation*}
m'_{--}(v,w):=\overline{m_{++}(-v,-w)},\qquad m'_{-+}(v,w):=\overline{m_{+-}(-v,-w)},\qquad m'_{++}(v,w):=\overline{m_{--}(-v,-w)}.
\end{equation*}
Therefore
\begin{equation*}
\begin{split}
&e^{-is\Lambda(\xi)}H_{++}(\xi,s)\\
&=\sum_{(\mu,\nu)\in\{(+,+),(+,-),(-,-)\}}\int_{\mathbb{R}^2\times\mathbb{R}^2}\frac{m_{++}(\xi,\eta)}{\Phi_{++}(\xi,\eta)}\big[\widehat{U_+}(\eta,s)m_{\mu\nu}(\xi-\eta,\chi)\widehat{U_\mu}(\xi-\eta-\chi,s)\widehat{U_\nu}(\chi,s)\big]\,d\eta d\chi\\
&+\sum_{(\mu,\nu)\in\{(+,+),(+,-),(-,-)\}}\int_{\mathbb{R}^2\times\mathbb{R}^2}\frac{m_{++}(\xi,\eta)}{\Phi_{++}(\xi,\eta)}\big[\widehat{U_+}(\xi-\eta,s)m_{\mu\nu}(\eta,\chi)\widehat{U_\mu}(\eta-\chi,s)\widehat{U_\nu}(\chi,s)\big]\,d\eta d\chi,
\end{split}
\end{equation*}
\begin{equation*}
\begin{split}
&e^{-is\Lambda(\xi)}H_{+-}(\xi,s)\\
&=\sum_{(\mu,\nu)\in\{(+,+),(+,-),(-,-)\}}\int_{\mathbb{R}^2\times\mathbb{R}^2}\frac{m_{+-}(\xi,\eta)}{\Phi_{+-}(\xi,\eta)}\big[\widehat{U_-}(\eta,s)m_{\mu\nu}(\xi-\eta,\chi)\widehat{U_\mu}(\xi-\eta-\chi,s)\widehat{U_\nu}(\chi,s)\big]\,d\eta d\chi\\
&+\sum_{(\mu,\nu)\in\{(-,-),(-,+),(+,+)\}}\int_{\mathbb{R}^2\times\mathbb{R}^2}\frac{m_{+-}(\xi,\eta)}{\Phi_{+-}(\xi,\eta)}\big[\widehat{U_+}(\xi-\eta,s)m'_{\mu\nu}(\eta,\chi)\widehat{U_\mu}(\eta-\chi,s)\widehat{U_\nu}(\chi,s)\big]\,d\eta d\chi,
\end{split}
\end{equation*}
and
\begin{equation*}
\begin{split}
&e^{-is\Lambda(\xi)}H_{--}(\xi,s)\\
&=\sum_{(\mu,\nu)\in\{(-,-),(-,+),(+,+)\}}\int_{\mathbb{R}^2\times\mathbb{R}^2}\frac{m_{--}(\xi,\eta)}{\Phi_{--}(\xi,\eta)}\big[\widehat{U_-}(\eta,s)m'_{\mu\nu}(\xi-\eta,\chi)\widehat{U_\mu}(\xi-\eta-\chi,s)\widehat{U_\nu}(\chi,s)\big]\,d\eta d\chi\\
&+\sum_{(\mu,\nu)\in\{(-,-),(-,+),(+,+)\}}\int_{\mathbb{R}^2\times\mathbb{R}^2}\frac{m_{--}(\xi,\eta)}{\Phi_{--}(\xi,\eta)}\big[\widehat{U_-}(\xi-\eta,s)m'_{\mu\nu}(\eta,\chi)\widehat{U_\mu}(\eta-\chi,s)\widehat{U_\nu}(\chi,s)\big]\,d\eta d\chi.
\end{split}
\end{equation*}

After reorganizing the terms, it follows that
\begin{equation*}
\begin{split}
H_{++}(\xi,s)&+H_{+-}(\xi,s)+H_{--}(\xi,s)\\
&=e^{is\Lambda(\xi)}\int_{\mathbb{R}^2\times\mathbb{R}^2}m_{+++}(\xi,\eta,\chi)\widehat{U_+}(\xi-\eta,s)\widehat{U_+}(\eta-\chi,s)\widehat{U_+}(\chi,s)\,d\eta d\chi\\
&+e^{is\Lambda(\xi)}\int_{\mathbb{R}^2\times\mathbb{R}^2}m_{++-}(\xi,\eta,\chi)\widehat{U_+}(\xi-\eta,s)\widehat{U_+}(\eta-\chi,s)\widehat{U_-}(\chi,s)\,d\eta d\chi\\
&+e^{is\Lambda(\xi)}\int_{\mathbb{R}^2\times\mathbb{R}^2}m_{+--}(\xi,\eta,\chi)\widehat{U_+}(\xi-\eta,s)\widehat{U_-}(\eta-\chi,s)\widehat{U_-}(\chi,s)\,d\eta d\chi\\
&+e^{is\Lambda(\xi)}\int_{\mathbb{R}^2\times\mathbb{R}^2}m_{---}(\xi,\eta,\chi)\widehat{U_-}(\xi-\eta,s)\widehat{U_-}(\eta-\chi,s)\widehat{U_-}(\chi,s)\,d\eta d\chi,
\end{split}
\end{equation*}
where
\begin{equation}\label{mult1}
m_{+++}(\xi,\eta,\chi):=\frac{m_{++}(\xi,\chi)m_{++}(\xi-\chi,\eta-\chi)}{\Phi_{++}(\xi,\chi)}+\frac{m_{++}(\xi,\eta)m_{++}(\eta,\chi)}{\Phi_{++}(\xi,\eta)}+\frac{m_{+-}(\xi,\eta)m'_{++}(\eta,\chi)}{\Phi_{+-}(\xi,\eta)},
\end{equation}
\begin{equation}\label{mult2}
\begin{split}
&m_{++-}(\xi,\eta,\chi):=\frac{m_{++}(\xi,\xi-\eta)m_{+-}(\eta,\chi)}{\Phi_{++}(\xi,\xi-\eta)}+\frac{m_{++}(\xi,\eta)m_{+-}(\eta,\chi)}{\Phi_{++}(\xi,\eta)}+\frac{m_{+-}(\xi,\chi)m_{++}(\xi-\chi,\eta-\chi)}{\Phi_{+-}(\xi,\chi)}\\
&+\frac{m_{+-}(\xi,\eta)m'_{-+}(\eta,\eta-\chi)}{\Phi_{+-}(\xi,\eta)}+\frac{m_{--}(\xi,\chi)m'_{++}(\xi-\chi,\xi-\eta)}{\Phi_{--}(\xi,\chi)}+\frac{m_{--}(\xi,\xi-\chi)m'_{++}(\xi-\chi,\xi-\eta)}{\Phi_{--}(\xi,\xi-\chi)},
\end{split}
\end{equation}
\begin{equation}\label{mult3}
\begin{split}
&m_{+--}(\xi,\eta,\chi):=\frac{m_{++}(\xi,\xi-\eta)m_{--}(\eta,\chi)}{\Phi_{++}(\xi,\xi-\eta)}+\frac{m_{++}(\xi,\eta)m_{--}(\eta,\chi)}{\Phi_{++}(\xi,\eta)}+\frac{m_{+-}(\xi,\chi)m_{+-}(\xi-\chi,\eta-\chi)}{\Phi_{+-}(\xi,\chi)}\\
&+\frac{m_{+-}(\xi,\eta)m'_{--}(\eta,\eta-\chi)}{\Phi_{+-}(\xi,\eta)}+\frac{m_{--}(\xi,\chi)m'_{-+}(\xi-\chi,\xi-\eta)}{\Phi_{--}(\xi,\chi)}+\frac{m_{--}(\xi,\xi-\chi)m'_{-+}(\xi-\chi,\xi-\eta)}{\Phi_{--}(\xi,\xi-\chi)},
\end{split}
\end{equation}
and
\begin{equation}\label{mult4}
m_{---}(\xi,\eta,\chi):=\frac{m_{+-}(\xi,\chi)m_{--}(\xi-\chi,\eta-\chi)}{\Phi_{+-}(\xi,\chi)}+\frac{m_{--}(\xi,\chi)m'_{--}(\xi-\chi,\eta-\chi)}{\Phi_{--}(\xi,\chi)}+\frac{m_{--}(\xi,\eta)m'_{--}(\eta,\chi)}{\Phi_{--}(\xi,\eta)}.
\end{equation}

To summarize, we proved the following:

\begin{proposition}\label{formul}
For $t\in[0,T]$ let
\begin{equation*}
\begin{split}
&U_+(t)=U(t),\qquad U_-(t)=\overline{U(t)},\qquad V_+(t)=e^{it\Lambda}U_+(t),\qquad V_-(t)=e^{-it\Lambda}U_-(t),\\
&\widehat{W}(\xi,t)=e^{it\Lambda(\xi)}\widehat{U}(\xi,t)+ie^{it\Lambda(\xi)}\sum_{(\mu,\nu)\in\{(+,+),(+,-),(-,-)\}}\int_{\mathbb{R}^2}\frac{m_{\mu\nu}(\xi,\eta)}{\Phi_{\mu\nu}(\xi,\eta)}\widehat{U_\mu}(\xi-\eta,t)\widehat{U_\nu}(\eta,t)\,d\eta,
\end{split}
\end{equation*}
where $m_{++},m_{+-},m_{--}$ are defined in \eqref{ms1}--\eqref{ms3} and
\begin{equation*}
\Phi_{\mu\nu}(\xi,\eta)=\Lambda(\xi)-\mu\Lambda(\xi-\eta)-\nu\Lambda(\eta),\qquad (\mu,\nu)\in\{(+,+),(+,-),(-,-)\}.
\end{equation*}
Then
\begin{equation}\label{formul1}
\frac{d}{ds}[\widehat{V_+}(\xi,s)]=\sum_{(\mu,\nu)\in\{(+,+),(+,-),(-,-)\}}e^{is\Lambda(\xi)}\int_{\mathbb{R}^2}m_{\mu\nu}(\xi,\chi)\widehat{U_\mu}(\xi-\chi,s)\widehat{U_\nu}(\chi,s)\,d\chi,
\end{equation}
and
\begin{equation}\label{formul2}
\begin{split}
\widehat{W}(\xi,t)-\widehat{W}(\xi,0)&=i\int_0^t\int_{\mathbb{R}^2\times\mathbb{R}^2}e^{is\Lambda(\xi)}m_{+++}(\xi,\eta,\chi)\widehat{U_+}(\xi-\eta,s)\widehat{U_+}(\eta-\chi,s)\widehat{U_+}(\chi,s)\,d\eta d\chi ds\\
&+i\int_0^t\int_{\mathbb{R}^2\times\mathbb{R}^2}e^{is\Lambda(\xi)}m_{++-}(\xi,\eta,\chi)\widehat{U_+}(\xi-\eta,s)\widehat{U_+}(\eta-\chi,s)\widehat{U_-}(\chi,s)\,d\eta d\chi ds\\
&+i\int_0^t\int_{\mathbb{R}^2\times\mathbb{R}^2}e^{is\Lambda(\xi)}m_{+--}(\xi,\eta,\chi)\widehat{U_+}(\xi-\eta,s)\widehat{U_-}(\eta-\chi,s)\widehat{U_-}(\chi,s)\,d\eta d\chi ds\\
&+i\int_0^t\int_{\mathbb{R}^2\times\mathbb{R}^2}e^{is\Lambda(\xi)}m_{---}(\xi,\eta,\chi)\widehat{U_-}(\xi-\eta,s)\widehat{U_-}(\eta-\chi,s)\widehat{U_-}(\chi,s)\,d\eta d\chi ds,
\end{split}
\end{equation}
where the symbols $m_{+++},m_{++-},m_{+--},m_{---}$ are defined in \eqref{mult1}--\eqref{mult4}.
\end{proposition}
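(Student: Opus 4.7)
The plan is to derive both identities directly by combining the Fourier-side formulation of \eqref{bo1} with a single normal-form integration by parts in time. First I would take the spatial Fourier transform of \eqref{bo1}. Because the nonlinearity is a sum of products of Riesz/Littlewood symbols applied to linear combinations of $U$ and $\overline U$, its Fourier transform is a finite sum of convolutions, and a direct reading of the factors coming from $\Lambda R_j|\nabla|\Lambda^{-1}$ and $|\nabla|R_jR_j$ produces the bilinear symbols $m_{++},m_{+-},m_{--}$ in \eqref{ms1}--\eqref{ms3}. This gives \eqref{norm2}. Conjugating $\widehat{U_+}$ by $e^{it\Lambda(\xi)}$ to pass to the profile $V_+$ kills the linear term on the left, and \eqref{formul1} is nothing but the resulting equation written with $\chi$ as the integration variable.

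Next I would set up the normal-form step used to define $W$. Integrating \eqref{formul1} in time gives $\widehat{V_+}(\xi,t)-\widehat{V_+}(\xi,0)$ as a sum over $(\mu,\nu)$ of time integrals of $e^{is\Phi_{\mu\nu}(\xi,\eta)}m_{\mu\nu}(\xi,\eta)\widehat{V_\mu}(\xi-\eta,s)\widehat{V_\nu}(\eta,s)$. The key algebraic input is the lower bound
\begin{equation*}
|\Phi_{\mu\nu}(\xi,\eta)|^{-1}\lesssim 1+\min(|\xi|,|\eta|,|\xi-\eta|),
\end{equation*}
which lets me divide by $i\Phi_{\mu\nu}$ and integrate by parts in $s$, writing $e^{is\Phi_{\mu\nu}}=\frac{1}{i\Phi_{\mu\nu}}\frac{d}{ds}e^{is\Phi_{\mu\nu}}$. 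The resulting boundary terms at $s=t$ and $s=0$ are precisely the quadratic expressions subtracted in the definition of $\widehat W$, and the bulk term is $i\int_0^tH_{\mu\nu}(\xi,s)\,ds$ as in \eqref{norm4}. This gives the intermediate identity for $\widehat W(\xi,t)-\widehat W(\xi,0)$ before any substitution.

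Finally, to obtain \eqref{formul2} I would substitute the evolution equations for $\widehat{V_\mu}$ and $\widehat{V_\nu}$ inside $\frac{d}{ds}[\widehat{V_\mu}(\xi-\eta,s)\widehat{V_\nu}(\eta,s)]$. The equation for $\widehat{V_-}$ follows from complex conjugation of \eqref{formul1}, and the symbols $m'_{\mu\nu}$ defined right before \eqref{mult1} are simply $\overline{m_{-\mu,-\nu}(-\cdot,-\cdot)}$, so each of $H_{++},H_{+-},H_{--}$ becomes a double convolution with an $e^{is\Lambda(\xi)}$ prefactor. Grouping the six resulting cubic terms according to the triple of signs $(\mu_1,\mu_2,\mu_3)\in\{+,-\}^3$ attached to the three factors of $U_{\pm}$, and relabeling the inner convolution variable as $\chi$ so that the three arguments are $\xi-\eta$, $\eta-\chi$, $\chi$, packages them into the four integrals in \eqref{formul2} with symbols read off from \eqref{mult1}--\eqref{mult4}.

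The routine part is bookkeeping: I would organize the six sub-terms into a $6\times 4$ table indexed by ``which factor of $H$ is being differentiated'' versus ``sign pattern of the output'', so that each nonzero entry of the table matches one summand of \eqref{mult1}--\eqref{mult4}. The only genuine obstacle is making the variable changes consistent across the two sub-terms of each $H_{\mu\nu}$: in the first sub-term the variable $\eta$ plays the role of the first input and the inner convolution acts on the second, while in the second sub-term the roles are reversed, which forces the substitutions $\eta\mapsto\xi-\eta$ (and a matching relabeling of $\chi$) in half of the terms before the symbols align with \eqref{mult1}--\eqref{mult4}. Once those substitutions are performed carefully, the identities \eqref{formul1} and \eqref{formul2} follow and no analytic estimate is needed for this proposition.
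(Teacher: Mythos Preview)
Your proposal is correct and follows essentially the same route as the paper: Fourier-transform \eqref{bo1} to obtain \eqref{norm2}, conjugate to the profile $V_+$ to get \eqref{formul1}, integrate by parts in $s$ using the nonvanishing of $\Phi_{\mu\nu}$ to produce the boundary terms defining $W$ and the bulk terms $H_{\mu\nu}$, then substitute the evolution laws \eqref{norm14} for $\widehat{V_\pm}$ and regroup by sign triple with the variable relabelings $\eta\mapsto\xi-\eta$, $\chi\mapsto\eta-\chi$ where needed. The paper carries out exactly this computation in Subsection~\ref{renorm}, and your description of the bookkeeping (including the $m'_{\mu\nu}$ symbols and the necessary substitutions) matches it.
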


\subsection{Estimates}\label{estim} In this subsection we prove the bound \eqref{bo3}, as a consequence of Lemma \ref{ble1} and Lemma \ref{ble2} below. Using the dispersive estimate
\begin{equation}\label{disper}
\|P_le^{-it\Lambda}f\|_{L^\infty}\lesssim (1+t)^{-1}(1+2^{2l})\|f\|_{L^1},\qquad t\in\mathbb{R},
\end{equation}
and the definition of the $Z$-norm, it follows that for any $l\in\mathbb{Z}$ and $t\in [0,T]$
\begin{equation*}
\|P_lU_{\pm}(t)\|_{L^\infty}\lesssim (1+t)^{-1}(1+2^{2l})\|P_lV_{\pm}\|_{L^1}\lesssim (1+t)^{-1}\frac{1+2^{2l}}{2^{l/10}+2^{10l}}\varepsilon_0.
\end{equation*}
Moreover
\begin{equation*}
\|\widehat{P_lU_{\pm}}(t)\|_{L^\infty}=\|\widehat{P_lV_{\pm}}(t)\|_{L^\infty}\lesssim \|P_lV_{\pm}\|_{L^1}\lesssim (2^{l/10}+2^{10l})^{-1}\varepsilon_0.
\end{equation*}
Therefore, using the assumption $\|U\|_{X_T^N}\leq\varepsilon_0$ and the definitions \eqref{sec4}--\eqref{sec5}, for any $t\in[0,T]$
\begin{equation}\label{lo7}
\begin{split}
&\|P_lU_{\pm}(t)\|_{L^2}+\|P_lV_{\pm}(t)\|_{L^2}\lesssim\varepsilon_0 \min(2^{-N_0l},(1+t)^{\delta}2^{-Nl}),\qquad l\geq 0,\\
&\|P_lU_{\pm}(t)\|_{L^2}+\|P_lV_{\pm}(t)\|_{L^2}\lesssim\varepsilon_0 2^{9l/10},\qquad l\leq 0,\\
&(2^{l/10}+2^{8l})\|P_lU_{\pm}(t)\|_{L^\infty}\lesssim \varepsilon_0\min(2^{2l},(1+t)^{-1}),\qquad l\in\mathbb{Z},\\
&\|\widehat{P_lU_{\pm}}(t)\|_{L^\infty}+\|\widehat{P_lV_{\pm}}(t)\|_{L^\infty}\lesssim \varepsilon_0(2^{l/10}+2^{10l})^{-1},\qquad l\in\mathbb{Z}.
\end{split}
\end{equation}

\begin{lemma}\label{ble1}
We have
\begin{equation*}
\sup_{t\in[0,T]}\|e^{it\Lambda}U(t)-W(t)\|_{Y^{N_0}}\lesssim \varepsilon_0^2.
\end{equation*}
\end{lemma}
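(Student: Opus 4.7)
By Proposition~\ref{formul}, the difference $V_+-W = e^{it\Lambda}U-W$ has Fourier transform
\begin{equation*}
\mathcal{F}[V_+-W](\xi,t) = -i\sum_{(\mu,\nu)\in\{(+,+),(+,-),(-,-)\}}\int_{\mathbb{R}^2}e^{it\Phi_{\mu\nu}(\xi,\eta)}\,n_{\mu\nu}(\xi,\eta)\,\widehat{V_\mu}(\xi-\eta,t)\,\widehat{V_\nu}(\eta,t)\,d\eta,
\end{equation*}
where $n_{\mu\nu}(\xi,\eta):=m_{\mu\nu}(\xi,\eta)/\Phi_{\mu\nu}(\xi,\eta)$. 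The first substep is a symbol bound: combining the explicit formulas \eqref{ms1}--\eqref{ms3} with the inequality $|\Phi_{\mu\nu}|^{-1}\lesssim 1+\min(|\xi|,|\eta|,|\xi-\eta|)$ noted right after \eqref{keyphi}, one gets $|n_{\mu\nu}(\xi,\eta)|\lesssim(|\xi|+|\eta|+|\xi-\eta|)(1+\min(|\xi|,|\eta|,|\xi-\eta|))$, together with Coifman--Meyer-type estimates for its derivatives on each product of dyadic shells $|\xi|\sim 2^k$, $|\xi-\eta|\sim 2^{k_1}$, $|\eta|\sim 2^{k_2}$. The singularity of $1/\Phi_{\mu\nu}$ at the origin is compensated by the vanishing factor in the numerator $m_{\mu\nu}$, so $n_{\mu\nu}$ is a bona fide bilinear multiplier.

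\medskip

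The $H^{N_0}$ part of the $Y^{N_0}$-norm is handled by a Littlewood--Paley decomposition on both inputs, putting one factor in $L^2$ and the other in $L^\infty$ via the $\|\widehat{P_lV_\pm}\|_{L^\infty}$ bound from \eqref{lo7}; the high-frequency sums converge thanks to the $2^{-N_0 l_+}$ gain on $\|P_lV_\pm\|_{L^2}$ in \eqref{lo7} (which eats the two-derivative loss in $n_{\mu\nu}$ since $N_0=20\gg 2$), and the low-frequency sums converge thanks to the vanishing of $n_{\mu\nu}$ at the origin. The unweighted $L^2$ piece of $\|P_k[V_+-W]\|_Z$, namely $(2^{k/10}+2^{10k})\|P_k[V_+-W]\|_{L^2}$, is produced by exactly the same computation with the weight $2^{k/10}+2^{10k}$ in place of $2^{N_0k_+}$.

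\medskip

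The heart of the argument is the weighted part of the $Z$-norm, i.e.\ uniform control in $t$ of $(2^{k/10}+2^{10k})\sum_{j\geq 0}2^j\|\varphi_j(x)\,P_k[V_+-W](x,t)\|_{L^2}$. I would rewrite the bilinear operator in physical space as
\begin{equation*}
P_k[V_+-W](x,t)=-i\sum_{(\mu,\nu),k_1,k_2}\iint K^{\mu\nu}_{k,k_1,k_2}(x,y,z;t)\,P_{k_1}V_\mu(y,t)\,P_{k_2}V_\nu(z,t)\,dy\,dz,
\end{equation*}
where $K^{\mu\nu}_{k,k_1,k_2}$ is the oscillatory kernel obtained by inserting the cutoffs $\varphi_k(\xi)\varphi_{[k_1-1,k_1+1]}(\xi-\eta)\varphi_{[k_2-1,k_2+1]}(\eta)$ into the symbol $n_{\mu\nu}(\xi,\eta)e^{it\Phi_{\mu\nu}(\xi,\eta)}$. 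Repeated integration by parts in $(\xi,\eta)$, using $|D^\alpha\Lambda|\lesssim_\alpha 1$ for $|\alpha|\geq 1$, gives rapid decay of $K$ on the set $|x-y|+|y-z|\gg 1+t+2^{-\min(k,k_1,k_2)}$. Combined with the spatial localization of the inputs provided by the $Z$-norm bound $\sum_{j'}2^{j'}\|\varphi_{j'}P_{k_\ell}V_\mu\|_{L^2}\lesssim\varepsilon_0(2^{k_\ell/10}+2^{10k_\ell})^{-1}$, a near/far decomposition around $|x|\sim 2^j$ yields the required $2^{-j}$ gain after summing in $k_1,k_2$. This is a time-static version of Steps~4--5 in the proof of Proposition~\ref{LocExLemma}(iii).

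\medskip

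\textbf{Main obstacle.} The delicate point is that the kernel decay degrades linearly in $t$: one may only integrate by parts in directions transverse to $\nabla\Phi_{\mu\nu}$, so the effective spatial scale of $K^{\mu\nu}_{k,k_1,k_2}(\cdot,\cdot,\cdot;t)$ grows like $1+t$. What saves us is exactly the $\ell^1$-in-$j$ weight built into the definition \eqref{sec5} of the $Z$-norm: summing the localized inputs $\varphi_{j'}P_{k_\ell}V_\mu$ over spatial scales $j'\lesssim \log_2(1+t)$ costs only a geometric factor, which is absorbed by the $2^{-j}$ gain from the kernel on the remaining region $2^j\gg 1+t$. This explains, in concrete terms, why the $Z$-norm of \eqref{sec5} is tailored to deliver bilinear estimates that are uniform in time; any weaker weight would fail to beat the linear-in-$t$ loss in the kernel decay.
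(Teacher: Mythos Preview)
Your physical-space kernel argument for large $j$ is essentially what the paper does in Steps~3--5 of its proof (cf.~\eqref{lu4}, \eqref{lu7}, \eqref{lu15}), and your $H^{N_0}$ and unweighted-$L^2$ estimates are fine. The genuine gap is in the weighted sum $\sum_j 2^j\|\varphi_j\cdot P_kF_{\mu\nu}(t)\|_{L^2}$ over the region $2^j\lesssim 1+t$, and your ``Main obstacle'' paragraph does not actually close it.

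Concretely: your integration-by-parts argument only gives kernel decay when $|x-y|+|y-z|\gg 1+t+2^{-\min(k,k_1,k_2)}$, so for $2^j\lesssim 1+t$ there is no localization at all, and you are reduced to the plain $L^2$ bound. With the time-independent estimate you derive (from $\|\widehat{P_lV_\pm}\|_{L^\infty}$), summing $2^j$ over $j\lesssim\log_2(1+t)$ produces a factor $(1+t)$ that nothing absorbs. Your sentence ``summing the localized inputs $\varphi_{j'}P_{k_\ell}V_\mu$ over spatial scales $j'\lesssim\log_2(1+t)$ costs only a geometric factor, which is absorbed by the $2^{-j}$ gain from the kernel on the remaining region $2^j\gg 1+t$'' conflates the output scale $j$ with the input scales $j'$: the $2^{-j}$ kernel gain lives on a \emph{different} region from the one causing trouble, so it cannot compensate.

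What the paper actually does for $2^j\lesssim 1+t$ is use the dispersive bound $\|P_lU_\pm(t)\|_{L^\infty}\lesssim(1+t)^{-1}(2^{l/10}+2^{8l})^{-1}\varepsilon_0$ (i.e.\ \eqref{keydisperse2}), fed through the bilinear multiplier estimate of Lemma~\ref{tech2} with one input in $L^\infty$, to obtain $\|P_kF_{\mu\nu}(t)\|_{L^2}\lesssim\varepsilon_0^2(1+t)^{-1}(2^{k/10}+2^{10k})^{-1}$ (see \eqref{lo15}--\eqref{lo16}). This time-decay exactly cancels the factor $(1+t)$ from summing over $j\leq J_{k,t}\approx\log_2(1+t)$. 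Your proposal never invokes the physical-space $L^\infty$ decay of $U_\pm$, only the time-static Fourier bound $\|\widehat{P_lV_\pm}\|_{L^\infty}$, and that is not enough here.
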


\begin{proof}[Proof of Lemma \ref{ble1}] Using the definitions, see Lemma \ref{formul}, it suffices to prove that for any $(\mu,\nu)\in\{(+,+),(+,-),(-,-)\}$
\begin{equation}\label{lo1}
\sup_{t\in[0,T]}\big\|F_{\mu\nu}(t)\big\|_{H^{N_0}}+\sup_{t\in[0,T]}\big\|F_{\mu\nu}(t)\big\|_{Z}\lesssim\varepsilon_0^2,
\end{equation}
where
\begin{equation*}
\widehat{F_{\mu\nu}}(\xi,t):=e^{it\Lambda(\xi)}\int_{\mathbb{R}^2}\frac{m_{\mu\nu}(\xi,\eta)}{\Phi_{\mu\nu}(\xi,\eta)}\widehat{U_\mu}(\xi-\eta,t)\widehat{U_\nu}(\eta,t)\,d\eta.
\end{equation*}

{\bf{Step 1.}} We prove first certain $L^2$ estimates. For any $k\in\mathbb{Z}$ and $t\in[0,T]$ we have
\begin{equation}\label{lo2}
\begin{split}
\widehat{P_kF_{\mu\nu}}(\xi,t)&=\varphi_k(\xi)e^{it\Lambda(\xi)}\int_{\mathbb{R}^2}\frac{m_{\mu\nu}(\xi,\eta)}{\Phi_{\mu\nu}(\xi,\eta)}\widehat{U_\mu}(\xi-\eta,t)\widehat{U_\nu}(\eta,t)\,d\eta\\
&=\sum_{(k_1,k_2)\in\mathcal{X}_k^1\cup\mathcal{X}_k^2}\varphi_k(\xi)e^{it\Lambda(\xi)}\int_{\mathbb{R}^2}\frac{m_{\mu\nu}(\xi,\eta)}{\Phi_{\mu\nu}(\xi,\eta)}\varphi_{k_1}(\xi-\eta)\widehat{U_\mu}(\xi-\eta,t)\varphi_{k_2}(\eta)\widehat{U_\nu}(\eta,t)\,d\eta\\
&=\sum_{(k_1,k_2)\in\mathcal{X}_k^1\cup\mathcal{X}_k^2}e^{it\Lambda(\xi)}\int_{\mathbb{R}^2}P_{k,k_1,k_2}^{\mu\nu}(\xi,\eta)\widehat{P_{k_1}U_\mu}(\xi-\eta,t)\widehat{P_{k_2}U_\nu}(\eta,t)\,d\eta
\end{split}
\end{equation}
where, as before,
\begin{equation*}
\begin{split}
&\mathcal{X}^1_k=\{(k_1,k_2)\in\mathbb{Z}\times\mathbb{Z}:|\max(k_1,k_2)-k|\leq 4\},\\
&\mathcal{X}^2_k=\{(k_1,k_2)\in\mathbb{Z}\times\mathbb{Z}:\max(k_1,k_2)\geq k+4\text{ and }|k_1-k_2|\leq 4\},
\end{split}
\end{equation*}
and
\begin{equation}\label{km2}
P_{k,k_1,k_2}^{\mu\nu}(\xi,\eta):=\varphi_k(\xi)\frac{m_{\mu\nu}(\xi,\eta)}{\Phi_{\mu\nu}(\xi,\eta)}\varphi_{[k_1-1,k_1+1]}(\xi-\eta)\varphi_{[k_2-1,k_2+1]}(\eta).
\end{equation}

We use the estimates \eqref{lo7} and \eqref{km1}: for any $k\leq 0$ and $t\in[0,T]$ it follows that
\begin{equation}\label{lo15}
\begin{split}
\|P_kF_{\mu\nu}(t)\|_{L^2}&\lesssim\sum_{(k_1,k_2)\in\mathcal{X}_k^2}(1+2^{3k_1})2^{k_1}\|P_{k_1}U_\pm(t)\|_{L^2}
\min(2^k\|P_{k_2}U_\pm(t)\|_{L^2},\|P_{k_2}U_\pm(t)\|_{L^\infty})\\
&+\sum_{(k_1,k_2)\in\mathcal{X}_k^1,\,k_1\leq k_2}2^k\|P_{k_1}U_\pm(t)\|_{L^2}\|P_{k_2}U_\pm(t)\|_{L^\infty}\\
&\lesssim \varepsilon_0^2\min((1+t)^{-1},2^k).
\end{split}
\end{equation}
Similarly, for any $k\geq 0$ and $t\in[0,T]$,
\begin{equation}\label{lo16}
\begin{split}
\|P_kF_{\mu\nu}(t)\|_{L^2}&\lesssim\sum_{(k_1,k_2)\in\mathcal{X}_k^2}2^{2k}(1+2^{3k_1})2^{k_1}\|P_{k_1}U_\pm(t)\|_{L^\infty}\|P_{k_2}U_\pm(t)\|_{L^2}\\
&+\sum_{(k_1,k_2)\in\mathcal{X}_k^1,\,-20k_2\leq k_1\leq k_2}2^{2k}(1+2^{3k_1})2^k\|P_{k_1}U_\pm(t)\|_{L^\infty}\|P_{k_2}U_\pm(t)\|_{L^2}\\
&+\sum_{(k_1,k_2)\in\mathcal{X}_k^1,\,k_1\leq -20k_2}2^{2k}(1+2^{3k_1})2^k\|P_{k_1}U_\pm(t)\|_{L^2}\|P_{k_2}U_\pm(t)\|_{L^\infty}\\
&\lesssim \varepsilon_0^2\min(2^{-(N-3)k},(1+t)^{-1}2^{-15k}).
\end{split}
\end{equation}
The bound on the first term in \eqref{lo1} follows easily from the last two inequalities.

{\bf{Step 2.}} We prove now the bound on the second term in \eqref{lo1}, i.e. for any $k\in\mathbb{Z}$ and $t\in[0,T]$
\begin{equation*}
\sum_{j\in\mathbb{Z}_+}2^j\|\varphi_j\cdot P_kF_{\mu\nu}(t)\|_{L^2}\lesssim\varepsilon_0^2(2^{k/10}+2^{10k})^{-1}.
\end{equation*}
Using \eqref{lo15} and \eqref{lo16}, it suffices to prove that
\begin{equation*}
\sum_{j\geq J_{k,t}}2^j\|\varphi_j\cdot P_kF_{\mu\nu}(t)\|_{L^2}\lesssim\varepsilon_0^22^{-10k_+},
\end{equation*}
where, for some sufficiently large constant $D$,
\begin{equation}\label{lo16.5}
2^{J_{k,t}}=D(1+t+2^{-41k/40})(1+2^{2k}).
\end{equation}
Using \eqref{lo2} and the formulas $\widehat{U_\mu}(\xi,t)=e^{-\mu it\Lambda(\xi)}\widehat{V_\mu}(\xi,t)$ we write
\begin{equation}\label{lu3.5}
\begin{split}
&P_k F_{\mu\nu}(t)=\sum_{(k_1,k_2)\in \mathcal{X}_k}F_{k,k_1,k_2}^{\mu\nu}(t),\\
&\widehat{F_{k,k_1,k_2}^{\mu\nu}}(\xi,t):=\int_{\mathbb{R}^2}e^{it[\Lambda(\xi)-\mu\Lambda(\xi-\eta)-\nu\Lambda(\eta)]}P_{k,k_1,k_2}^{\mu\nu}(\xi,\eta)\widehat{P_{k_1}V_\mu}(\xi-\eta,t)\widehat{P_{k_2}V_\nu}(\eta,t)\,d\eta,
\end{split}
\end{equation}
where, as before, $\mathcal{X}_k=\mathcal{X}_k^1\cup\mathcal{X}_k^2$. Using this decomposition, it suffices to prove that
\begin{equation}\label{lo21}
\sum_{j\geq J_{k,t}}\sum_{(k_1,k_2)\in \mathcal{X}_k}2^j\|\varphi_j\cdot F_{k,k_1,k_2}^{\mu\nu}(t)\|_{L^2}\lesssim\varepsilon_0^22^{-10k_+},
\end{equation}
 for any $k\in\mathbb{Z}$ and $t\in[0,T]$. This is similar to the proof of Proposition \ref{LocExLemma} (iii) in the previous section, and follows from the bounds \eqref{lu4}, \eqref{lu7}, and \eqref{lu15} below.
  
{\bf{Step 3.}} Since
\begin{equation*}
\sup_{\xi,\eta\in\mathbb{R}^2}|P_{k,k_1,k_2}^{\mu\nu}(\xi,\eta)|\lesssim (1+2^{\min(k_1,k_2)})2^{\max(k_1,k_2)}, 
\end{equation*}
it follows from \eqref{lo7} that
\begin{equation*}
\begin{split}
\|F_{k,k_1,k_2}^{\mu\nu}(s)\|_{L^2}&\lesssim (2^{\max(k_1,k_2)}+2^{k_1+k_2})\min\big(\|\widehat{P_{k_1}V_\mu}(s)\|_{L^1}\|\widehat{P_{k_2}V_\nu}(s)\|_{L^2},\|\widehat{P_{k_1}V_\mu}(s)\|_{L^2}\|\widehat{P_{k_2}V_\nu}(s)\|_{L^1}\big)\\
&\lesssim\varepsilon_0^2(2^{\max(k_1,k_2)}+2^{k_1+k_2})2^{(19/10)\min(k_1,k_2)}(1+2^{\max(k_1,k_2)})^{-N_0}.
\end{split}
\end{equation*}
Therefore
\begin{equation}\label{lu4}
\sum_{j\geq J_{k,t}}\sum_{(k_1,k_2)\in \mathcal{X}_k,\,\min(k_1,k_2)\leq-2j/3}2^j\|F_{k,k_1,k_2}^{\mu\nu}(s)\|_{L^2}\lesssim \varepsilon_0^22^{-10 k_+}.
\end{equation}

{\bf{Step 4.}} It remains to bound the contribution
\begin{equation*}
\sum_{j\geq J_{k,t}}\sum_{(k_1,k_2)\in \mathcal{X}_k,\,\min(k_1,k_2)\geq-2j/3}2^{j}\|\varphi_j\cdot F_{k,k_1,k_2}^{\mu\nu}(t)\|_{L^2}.
\end{equation*}
For this we write, using \eqref{lu3.5},
\begin{equation*}
\begin{split}
&F_{k,k_1,k_2}^{\mu\nu}(x,t)=\int_{\mathbb{R}^2\times\mathbb{R}^2}P_{k_1}V_\mu(y,t)P_{k_2}V_\nu(z,t)L_{k,k_1,k_2}^{\mu\nu}(x,y,z,t)\,dydz,\\
&L_{k,k_1,k_2}^{\mu\nu}(x,y,z,t):=c\int_{\mathbb{R}^2\times\mathbb{R}^2}e^{it[\Lambda(\xi)-\mu\Lambda(\xi-\eta)-\nu\Lambda(\eta)]}e^{i(x\cdot\xi-y\cdot(\xi-\eta)-z\cdot\eta)}P_{k,k_1,k_2}^{\mu\nu}(\xi,\eta)\,d\xi d\eta.
\end{split}
\end{equation*}
Since
\begin{equation*}
\sup_{\xi\in\mathbb{R}^2}|D^\alpha \Lambda(\xi)|\lesssim_{|\alpha|}1
\end{equation*}
it follows by integration by parts that
\begin{equation}\label{lu5}
\begin{split}
&|L_{k,k_1,k_2}^{\mu\nu}(x,y,z,t)|\lesssim (1+2^{4k_1}+2^{4k_2})(|x-y|+|x-z|)^{-20}\\
\text{if }&2^j\geq D(1+t),\,\,\min(k,k_1,k_2)\geq -40j/41,\,\,|x-y|+|x-z|\geq 2^{j-4},
\end{split}
\end{equation}
provided that the constant $D$ is fixed sufficiently large. Letting
\begin{equation*}
\begin{split}
&G_{j,k,k_1,k_2}^{\mu\nu}(x,t):=\int_{\mathbb{R}^2\times\mathbb{R}^2}V^\mu_{j,k_1}(y,t)V^\nu_{j,k_2}(z,t)L_{k,k_1,k_2}^{\mu\nu}(x,y,z,t)\,dydz,\\
&V^\mu_{j,l}(x,t)=\varphi_{[j-4,j+4]}(x)\cdot P_{l}V_\mu(x,t),
\end{split}
\end{equation*}
it follows from \eqref{cu5} that
\begin{equation*}
\|\varphi_j\cdot [F_{k,k_1,k_2}^{\mu\nu}(t)-G_{j,k,k_1,k_2}^{\mu\nu}(t)]\|_{L^2}\lesssim 2^{-10j}(1+2^{4k_1}+2^{4k_2})\|P_{k_1}V_+(t)\|_{L^2}\|P_{k_2}V_+(t)\|_{L^2}.
\end{equation*}
Since $\|P_{l}V_+(t)\|_{L^2}\lesssim 2^{-N_0l_+}\|V_+(t)\|_{H^{N_0}}$, $l\in\mathbb{Z}$, it follows that
\begin{equation}\label{lu7}
\sum_{j\geq J_{k,t}}\sum_{(k_1,k_2)\in \mathcal{X}_k,\,\min(k_1,k_2)\geq-2j/3}2^j\|\varphi_j\cdot [F_{k,k_1,k_2}^{\mu\nu}(t)-G_{j,k,k_1,k_2}^{\mu\nu}(t)]\|_{L^2}\leq\varepsilon_0^22^{-10 k_+}.
\end{equation}

{\bf{Step 5.}} Finally we rewrite $G_{j,k,k_1,k_2}^{\mu\nu}(t)$ in the frequency space,
\begin{equation*}
\widehat{G_{j,k,k_1,k_2}^{\mu\nu}}(\xi,t)=\int_{\mathbb{R}^2}e^{it[\Lambda(\xi)-\mu\Lambda(\xi-\eta)-\nu\Lambda(\eta)]}P^{\mu\nu}_{k,k_1,k_2}(\xi,\eta)\widehat{V^\mu_{j,k_1}}(\xi-\eta,t)\widehat{V_{j,k_2}^\nu}(\eta,t)\,d\eta,
\end{equation*}
and estimate
\begin{equation*}
\|\widehat{G_{j,k,k_1,k_2}^{\mu\nu}}(t)\|_{L^2}\lesssim \|P^{\mu\nu}_{k,k_1,k_2}\|_{L^\infty}2^{\min(k_1,k_2)}\|\widehat{V^\mu_{j,k_1}}(t)\|_{L^2}\|\widehat{V_{j,k_2}^\nu}(t)\|_{L^2}.
\end{equation*}
Therefore
\begin{equation}\label{lu12}
\begin{split}
\sum_{j\geq J_{k,t}}&\sum_{(k_1,k_2)\in \mathcal{X}_k,\,\min(k_1,k_2)\geq-2j/3}2^{j}\|\varphi_j\cdot G_{j,k,k_1,k_2}^{\mu\nu}(t)\|_{L^2}\\
&\lesssim\sum_{(k_1,k_2)\in \mathcal{X}_k,\,k_1\leq k_2}\sum_{j\geq 10}2^{j}2^{k_1+k_2}(1+2^{\min(k_1,k_2)})\|V^\pm_{j,k_1}(t)\|_{L^2}\|V_{j,k_2}^\pm(t)\|_{L^2}.
\end{split}
\end{equation}
We estimate the sum in the right-hand side of \eqref{cu12} in two different ways, depending on the relative sizes of $k,k_1,k_2$. If $k_1=\min(k_1,k_2)\notin[100-10k,k-10]$ then we estimate $\|V^\pm_{j,k_1}(t)\|_{L^2}\lesssim \varepsilon_02^{-N_0\max(k_1,0)}$, and the corresponding sum in the right-hand side of \eqref{lu12} is bounded by
\begin{equation*}
C\sum_{(k_1,k_2)\in \mathcal{X}_k,\,k_1\leq k_2,\,k_1\notin[100-10k,k-10]}\varepsilon_02^{k_1+k_2}2^{(1-N_0)\max(k_1,0)}(2^{k_2/10}+2^{10k_2})^{-1}\|V_+(t)\|_{Z}\lesssim \varepsilon_0^22^{-10 k_+}.
\end{equation*}
On the other hand, if $k_1=\min(k_1,k_2)\in[100-10k,k-10]$ (in particular $k\geq 10$, $|k_2-k|\leq 4$) then we use $\|V^\pm_{j,k_2}(t)\|_{L^2}\lesssim \varepsilon_0 2^{-N_0k}$, and the corresponding sum in the right-hand side of \eqref{cu12} is bounded by
\begin{equation*}
C\sum_{(k_1,k_2)\in \mathcal{X}_k,\,k_1\leq k_2,\,k_1\in[100-10k,k-10]}\varepsilon_02^{k_1+k_2}2^{(1-N_0)k}(2^{k_1/10}+2^{10k_1})^{-1}\|V_+(t)\|_{Z}\lesssim \varepsilon_0^22^{-10 k}.
\end{equation*}
Therefore
\begin{equation}\label{lu15}
\sum_{j\geq J_{k,t}}\sum_{(k_1,k_2)\in \mathcal{X}_k,\,\min(k_1,k_2)\geq-2j/3}2^j\|\varphi_j\cdot G_{j,k,k_1,k_2}^{\mu\nu}(t)\|_{L^2}\lesssim \varepsilon_0^22^{-10 k_+},
\end{equation}
which completes the proof.
\end{proof}

\begin{lemma}\label{ble2}
We have
\begin{equation*}
\sup_{t\in[0,T]}\|W(t)-W(0)\|_{Y^{N_0}}\lesssim \varepsilon_0^3.
\end{equation*}
\end{lemma}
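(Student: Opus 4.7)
The starting point is the cubic identity \eqref{formul2}, which expresses $\widehat{W}(\xi,t)-\widehat{W}(\xi,0)$ as a sum over $(\mu,\nu,\sigma)\in\{(+,+,+),(+,+,-),(+,-,-),(-,-,-)\}$ of expressions
\[
i\int_0^t\!\!\int_{\mathbb{R}^2\times\mathbb{R}^2} e^{is\Phi_{\mu\nu\sigma}(\xi,\eta,\chi)}\,m_{\mu\nu\sigma}(\xi,\eta,\chi)\,\widehat{V_\mu}(\xi-\eta,s)\widehat{V_\nu}(\eta-\chi,s)\widehat{V_\sigma}(\chi,s)\,d\eta d\chi ds,
\]
with $\Phi_{\mu\nu\sigma}(\xi,\eta,\chi):=\Lambda(\xi)-\mu\Lambda(\xi-\eta)-\nu\Lambda(\eta-\chi)-\sigma\Lambda(\chi)$ (after substituting $\widehat{U_\mu}=e^{-i\mu s\Lambda}\widehat{V_\mu}$). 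The plan is to bound the $H^{N_0}\cap Z$-norm of each such integral by $\varepsilon_0^3$, using \eqref{lo7} as the only input on $V_\pm$.

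First I would split the four triples into an \emph{elliptic} family and a \emph{resonant} one. A short computation using $\Lambda\geq\sqrt{b}$ together with the refined subadditivity $(\Lambda(x)+\Lambda(y))^2\geq\Lambda(x+y)^2+b$ shows that for $(\mu,\nu,\sigma)\neq(+,+,+)$ the phase $\Phi_{\mu\nu\sigma}$ is bounded below by a positive constant on the effective support of $m_{\mu\nu\sigma}$, while $\Phi_{+++}$ is the genuine space-time resonant phase. For each of the three elliptic triples I would carry out a \emph{second} normal form: write $e^{is\Phi_{\mu\nu\sigma}}=\partial_s(e^{is\Phi_{\mu\nu\sigma}})/(i\Phi_{\mu\nu\sigma})$ and integrate by parts in $s$. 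This produces boundary terms at $s=0,t$ together with bulk terms in which one of $\widehat{V_\mu},\widehat{V_\nu},\widehat{V_\sigma}$ has been replaced by $\partial_s\widehat{V_{\cdots}}$; by \eqref{formul1} this derivative is itself quadratic in the $\widehat{V_\pm}$, so the bulk integrand becomes a \emph{quartic} oscillatory integral. Ellipticity only modifies the symbol by the bounded factor $1/\Phi_{\mu\nu\sigma}$, and both the boundary and the quartic bulk pieces are then estimated in $H^{N_0}\cap Z$ by exactly the physical-space dyadic machinery used in Lemma \ref{ble1}: combine the $L^2/L^\infty/L^1$-Fourier bounds from \eqref{lo7} with the kernel decay obtained by integration by parts in $(\xi,\eta,\chi)$ using $|D^\alpha\Lambda|\lesssim_{|\alpha|}1$. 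The extra input contributes one more factor of $\varepsilon_0$, so the elliptic contributions are $O(\varepsilon_0^4)\leq\varepsilon_0^3$.

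For the resonant triple $(+,+,+)$ no further normal form is possible, and I would argue directly. Decompose the integrand dyadically as in \eqref{lu3.5}, with output frequency $2^k$ and input frequencies $2^{k_1},2^{k_2},2^{k_3}$. For the $L^2$-part of the $Y^{N_0}$-norm, place two of the three inputs in $L^\infty$ using the endpoint dispersive estimate $\|P_lU_+(s)\|_{L^\infty}\lesssim\varepsilon_0(1+s)^{-1}(2^{l/10}+2^{8l})^{-1}$ from \eqref{lo7} and the third in $L^2$; this gives a pointwise-in-$s$ bound of order $\varepsilon_0^3(1+s)^{-2}$ on each frequency-localized piece, uniformly integrable in $s$, with a frequency sum that telescopes thanks to the $(2^{k/10}+2^{10k})^{-1}$-weight of the $Z$-norm. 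For the weighted $\sum_j 2^j\|\varphi_j P_k\cdot\|_{L^2}$ part of the $Z$-norm, I reproduce Steps 3--5 of the proof of Lemma \ref{ble1}: express the cubic integrand in physical space as a three-fold convolution against a kernel $L^{+++}_{k,k_1,k_2,k_3}(x,y,z,w;s)$ coming from the $(\xi,\eta,\chi)$-oscillatory integral, split according to whether $\max(|x-y|,|x-z|,|x-w|)\geq 2^{j-10}$, use repeated integration by parts in $(\xi,\eta,\chi)$ in the far regime to obtain arbitrary polynomial kernel decay, and in the near regime replace each $V_\mu$ by its spatial-dyadic piece $V^\mu_{j,k_i}:=\varphi_{[j-4,j+4]}\cdot P_{k_i}V_\mu$, on which the $Z$-norm provides direct control. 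Summation over $(j,k_1,k_2,k_3)$ telescopes as in \eqref{lu15}, and $\int_0^t(1+s)^{-2}\,ds=O(1)$ closes the bound at $\varepsilon_0^3$.

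The main obstacle is precisely this $Z$-norm estimate at the resonant cubic: at a space-time resonance the kernel $L^{+++}$ has no $s$-decay of its own, so one must rely entirely on the dispersive $L^\infty$ input together with the ``hyperbolic'' nature of $\Phi_{+++}$ in $(\eta,\chi)$ -- $(\nabla_\eta,\nabla_\chi)\Phi_{+++}$ does not vanish generically on $\{\Phi_{+++}=0\}$ -- which is what keeps the physical-space localization intact after the three-fold oscillatory integration. This is the feature that dictates the choice of the $\ell^1$-in-$j$ structure in the definition \eqref{sec5} of the $Z$-norm, and justifies why the resonant contribution still fits within the same $2^{-10k_+}$ dyadic budget used for the elliptic pieces.
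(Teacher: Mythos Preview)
Your overall architecture is right---split the four cubic phases into elliptic ones (handled by a second normal form) and a resonant one (handled by the physical-space/hyperbolic argument)---but you have misidentified which phase is which. The resonant triple is $(+,+,-)$, not $(+,+,+)$. Indeed, with
\[
\Phi_{++-}(\xi,\eta,\chi)=\Lambda(\xi)-\Lambda(\xi-\eta)-\Lambda(\eta-\chi)+\Lambda(\chi),
\]
one has $\Phi_{++-}(\xi,\eta,\chi)=0$ on the nontrivial set $\{\xi-\eta=\chi,\ \eta-\chi=\xi\}$ (equivalently $\eta=\xi+\chi$), so your ``second normal form'' for this triple would require dividing by a vanishing phase and cannot be carried out. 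Conversely, $\Phi_{+++}$ is genuinely elliptic: iterating the subadditivity bound \eqref{la2} twice gives
\[
-\Phi_{+++}(\xi,\eta,\chi)=\Lambda(\xi-\eta)+\Lambda(\eta-\chi)+\Lambda(\chi)-\Lambda(\xi)\gtrsim\big(1+\min(|\xi|,|\xi-\eta|,|\eta-\chi|,|\chi|)\big)^{-1},
\]
and similarly for $(+,-,-)$ and $(-,-,-)$ (this is \eqref{nh20} in the paper). So your statement ``$\Phi_{+++}$ is the genuine space-time resonant phase'' is false, and the subsequent remark that ``$(\nabla_\eta,\nabla_\chi)\Phi_{+++}$ does not vanish generically on $\{\Phi_{+++}=0\}$'' is vacuous because that set is empty.

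Once the labelling is corrected the paper's proof follows essentially your outline. For the elliptic triples $(+,+,+),(+,-,-),(-,-,-)$ one integrates by parts in $s$ as you describe, but note that the lower bound on $|\Phi_{\mu\nu\sigma}|$ is \emph{not} a positive constant: it degenerates like $(1+\min)^{-1}$ at high frequency, and this has to be absorbed (the paper does this via Lemma~\ref{LastTri}, which tracks the loss as a power of $2^{k_+}$). For the resonant triple $(+,+,-)$ the paper uses exactly the ``hyperbolic'' mechanism you sketch for the wrong phase: the gradient $\nabla_{\xi,v,\xi_3}\widetilde{\Phi}_{++-}$ (and its relabelled sibling $\widetilde{\Phi}_{-++}$) does not vanish on the relevant region, see \eqref{nh77}, and this drives the physical-space localization in Lemma~\ref{KerLem}. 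Your refined subadditivity $(\Lambda(x)+\Lambda(y))^2\geq\Lambda(x+y)^2+b$ is correct and useful for the elliptic cases, but it says nothing about $(+,+,-)$.
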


\begin{proof}[Proof of Lemma \ref{ble2}] {\bf{Step 1.}} We start again by proving $L^2$ estimates. Using \eqref{formul2}, for any $t\in[0,T]$ and $k\in\mathbb{Z}$ we write
\begin{equation}\label{lo30}
\mathcal{F}(P_k(W(t)-W(0)))(\xi)=\sum_{(\mu,\nu,\sigma)\in \{(+,+,+),(+,+,-),(+,-,-),(-,-,-)\}}i\int_0^t\widehat{P_kF_{\mu\nu\sigma}}(\xi,s)\,ds,
\end{equation}
where
\begin{equation}\label{lo30.5}
\begin{split}
&\widehat{P_kF_{\mu\nu\sigma}}(\xi,s):=\varphi_k(\xi)e^{is\Lambda(\xi)}\int_{\mathbb{R}^2\times\mathbb{R}^2}m_{\mu\nu\sigma}(\xi,\eta,\chi)\widehat{U_\mu}(\xi-\eta,s)\widehat{U_\nu}(\eta-\chi,s)\widehat{U_\sigma}(\chi,s)\,d\eta d\chi\\
&=\sum_{k_1,k_2,k_3\in\mathbb{Z}}e^{is\Lambda(\xi)}\int_{\mathbb{R}^2\times\mathbb{R}^2}P_{k,k_1,k_2,k_3}^{\mu\nu\sigma}(\xi,\eta,\chi)\widehat{P_{k_1}U_\mu}(\xi-\eta,s)\widehat{P_{k_2}U_\nu}(\eta-\chi,s)\widehat{P_{k_3}U_\sigma}(\chi,s)\,d\eta d\chi,
\end{split}
\end{equation}
and
\begin{equation}\label{lo31}
P_{k,k_1,k_2,k_3}^{\mu\nu\sigma}(\xi,\eta,\chi):=\varphi_k(\xi)m_{\mu\nu\sigma}(\xi,\eta,\chi)\varphi_{[k_1-1,k_1+1]}(\xi-\eta)\varphi_{[k_2-1,k_2+1]}(\eta-\chi)\varphi_{[k_3-1,k_3+1]}(\chi).
\end{equation}
Let
\begin{equation*}
\begin{split}
&\mathcal{Y}_k^1:=\{(k_1,k_2,k_3)\in\mathbb{Z}\times\mathbb{Z}\times\mathbb{Z}:|\max(k_1,k_2,k_3)-k|\leq 4\},\\
&\mathcal{Y}_k^2:=\{(k_1,k_2,k_3)\in\mathbb{Z}\times\mathbb{Z}\times\mathbb{Z}:\max(k_1,k_2,k_3)\geq k+4,\,\max(k_1,k_2,k_3)-\mathrm{med}(k_1,k_2,k_3)\leq 4\},\\
&\mathcal{Y}_k:=\mathcal{Y}_k^1\cup\mathcal{Y}_k^2.
\end{split}
\end{equation*}

Using Lemma \ref{tech3} and \eqref{lo7} we estimate for $k\leq 0$ and $s\in[0,T]$,
\begin{equation*}
\begin{split}
\|P_kF_{\mu\nu\sigma}(s)\|_{L^2}&\lesssim \sum_{(k_1,k_2,k_3)\in \mathcal{Y}_k^2,\,k_1\leq k_2\leq k_3}(2^{2k_3}+2^{7k_3})\\
&\big[2^{2k_1+10k}(1+s)^{-10}\|P_{k_1}U_{\pm}(s)\|_{L^2}\|P_{k_2}U_{\pm}(s)\|_{L^2}\|P_{k_3}U_{\pm}(s)\|_{L^2}\\
&+(\log(2+s)+|k|)\|P_{k_1}U_{\pm}(s)\|_{L^2}\|P_{k_2}U_{\pm}(s)\|_{L^\infty}\min(2^k\|P_{k_3}U_{\pm}(s)\|_{L^2},\|P_{k_3}U_{\pm}(s)\|_{L^\infty})\big]\\
&+\sum_{(k_1,k_2,k_3)\in \mathcal{Y}_k^1,\,k_1\leq k_2\leq k_3}2^{2k}\big[2^{2k_1}(1+s)^{-10}
\|P_{k_1}U_{\pm}(s)\|_{L^2}\|P_{k_2}U_{\pm}(s)\|_{L^2}\|P_{k_3}U_{\pm}(s)\|_{L^2}\\
&+\log(2+s)\|P_{k_1}U_{\pm}(s)\|_{L^2}\|P_{k_2}U_{\pm}(s)\|_{L^\infty}\|P_{k_3}U_{\pm}(s)\|_{L^\infty}\big]\\
&\lesssim \varepsilon_0^3(1+s)^{-1}(\log(2+s)+|k|)\min((1+s)^{-1},2^k).
\end{split}
\end{equation*}
Similarly, if $k\geq 0$ and $s\in[0,T]$,
\begin{equation*}
\begin{split}
\|P_k&F_{\mu\nu\sigma}(s)\|_{L^2}\lesssim \sum_{(k_1,k_2,k_3)\in \mathcal{Y}_k^2,\,k_1\leq k_2\leq k_3}2^{7k_3}\big[2^{2k_1}(1+s)^{-10}
\|P_{k_1}U_{\pm}(s)\|_{L^2}\|P_{k_2}U_{\pm}(s)\|_{L^2}\|P_{k_3}U_{\pm}(s)\|_{L^2}\\
&+\log(2+s)\|P_{k_1}U_{\pm}(s)\|_{L^\infty}\|P_{k_2}U_{\pm}(s)\|_{L^\infty}\|P_{k_3}U_{\pm}(s)\|_{L^2}\big]\\
&+\sum_{(k_1,k_2,k_3)\in \mathcal{Y}_k^1,\,k_1\leq k_2\leq k_3}2^{4k}(1+2^{3k_2})\big[2^{2k_1}(1+s)^{-10}
\|P_{k_1}U_{\pm}(s)\|_{L^2}\|P_{k_2}U_{\pm}(s)\|_{L^2}\|P_{k_3}U_{\pm}(s)\|_{L^2}\\
&+\log(2+s)\|P_{k_1}U_{\pm}(s)\|_{L^\infty}\|P_{k_2}U_{\pm}(s)\|_{L^\infty}\|P_{k_3}U_{\pm}(s)\|_{L^2}\big]\\
&\lesssim \varepsilon_0^3(1+s)^{-9/5}2^{-(N-4)k}.
\end{split}
\end{equation*}
Therefore
\begin{equation}\label{lo32}
\begin{split}
&\|P_kF_{\mu\nu\sigma}(s)\|_{L^2}\lesssim \varepsilon_0^3(1+s)^{-1}(\log(2+s)+|k|)\min((1+s)^{-1},2^k),\qquad \text{ if } k\leq 0,\\
&\|P_kF_{\mu\nu\sigma}(s)\|_{L^2}\lesssim \varepsilon_0^3(1+s)^{-9/5}2^{-(N-4)k},\qquad \text{ if }k\geq 0.
\end{split}
\end{equation}
In particular, using \eqref{lo30}
\begin{equation*}
\|W(t)-W(0)\|_{H^{N_0}}\lesssim \varepsilon_0^3,\qquad t\in[0,T],
\end{equation*}
as desired.

{\bf{Step 2.}} We prove now the remaining bound on the $Z$ norm, i.e.
\begin{equation*}
\sum_{j\in\mathbb{Z}_+}2^j\|\varphi_j\cdot P_k(W(t)-W(0))\|_{L^2}\lesssim \varepsilon_0^3(2^{k/10}+2^{10k})^{-1},
\end{equation*}
for any $t\in[0,T]$ and $k\in\mathbb{Z}$ fixed. We fix a dyadic decomposition of the function $\mathbf{1}_{[0,t]}$, i.e. we fix functions $q_0,\ldots,q_{L+1}:\mathbb{R}\to[0,1]$, $|L-\log_2(2+t)|\leq 2$, with the properties
\begin{equation}\label{nh1}
\begin{split}
&\sum_{m=0}^{L+1}q_l(s)=\mathbf{1}_{[0,t]}(s),\qquad \mathrm{supp}\,q_0\subseteq [0,2], \qquad \mathrm{supp}\,q_{L+1}\subseteq [t-2,t],\qquad\mathrm{supp}\,q_m\subseteq [2^{m-1},2^{m+1}],\\
&q_m\in C^1(\mathbb{R})\qquad\text{ and }\qquad \int_0^t|q'_m(s)|\,ds\lesssim 1\qquad\text{ for }m=1,\ldots,L.
\end{split}
\end{equation}
Using the formula \eqref{lo30} and inserting the partition $\{q_m\}_{m=0,\ldots,L+1}$, it suffices to prove that
\begin{equation*}
\sum_{j\in\mathbb{Z}_+}2^j\Big\|\varphi_j\cdot \int_{\mathbb{R}}q_m(s)P_kF_{\mu\nu\sigma}(s)\,ds\Big\|_{L^2}\lesssim \varepsilon_0^3(2^{k/10}+2^{10k})^{-1}2^{-m/100}
\end{equation*}
for any $k\in\mathbb{Z}$, $m\in\{0,\ldots,L+1\}$, and any $(\mu,\nu,\sigma)\in \{(+,+,+),(+,+,-),(+,-,-),(-,-,-)\}$. In view of \eqref{lo32} and the support properties of the functions $q_m$, it remains to prove that
\begin{equation}\label{nh2}
\sum_{j\geq M(k,m)}2^j\Big\|\varphi_j\cdot \int_{\mathbb{R}}q_m(s)P_kF_{\mu\nu\sigma}(s)\,ds\Big\|_{L^2}\lesssim \varepsilon_0^32^{-10k_+}2^{-m/100},
\end{equation}
where, for some constant $D$ sufficiently large,
\begin{equation}\label{nh3}
M(k,m):=D+\max(7m/10,-21k/20,10k).
\end{equation}

{\bf{Step 3.}} We examine now the formula \eqref{lo30.5} and estimate first the contributions of very low and very high frequencies. More precisely, assume $j\geq M(k,m)$ is fixed. Using Lemma \ref{tech3} with $l=\min(k_1,k_2,k_3,-10j)-1$ and \eqref{lo7} we estimate, if $\min(k_1,k_2,k_3)\leq -2j/3$,
\begin{equation*}
\begin{split}
\Big\|\int_{\mathbb{R}^2\times\mathbb{R}^2}P_{k,k_1,k_2,k_3}^{\mu\nu\sigma}(\xi,\eta,\chi)&\widehat{P_{k_1}U_\mu}(\xi-\eta,s)\widehat{P_{k_2}U_\nu}(\eta-\chi,s)\widehat{P_{k_3}U_\sigma}(\chi,s)\,d\eta d\chi\Big\|_{L^2_\xi}\\
&\lesssim \varepsilon_0^3(1+2^{7c})\big(2^{-20j}2^{9b/10}2^{-N_0c_+}+j2^{9b/5}2^{-m}2^{-N_0c_+}\big)\\
&\lesssim \varepsilon_0^32^{-(N_0-7)c_+}2^{-m}2^{9b/10}2^{-j/2},
\end{split}
\end{equation*}
where $b=\min(k_1,k_2,k_3)$ and $c=\max(k_1,k_2,k_3)$.

Similarly, if $\max(k_1,k_2,k_3)\geq j/10$ and $\min(k_1,k_2,k_3)\geq -2j/3$ (in particular $(k_1,k_2,k_3)\in\mathcal{Y}_k^2$), we use Lemma \ref{tech3} with $l=-10j$ and \eqref{lo7} to estimate
\begin{equation*}
\begin{split}
\Big\|\int_{\mathbb{R}^2\times\mathbb{R}^2}P_{k,k_1,k_2,k_3}^{\mu\nu\sigma}(\xi,\eta,\chi)&\widehat{P_{k_1}U_\mu}(\xi-\eta,s)\widehat{P_{k_2}U_\nu}(\eta-\chi,s)\widehat{P_{k_3}U_\sigma}(\chi,s)\,d\eta d\chi\Big\|_{L^2_\xi}\\
&\lesssim \varepsilon_0^3(1+2^{7c})\big(2^{-20j}2^{9b/10}2^{-2N_0c_+}+j2^k2^{19b/10}2^{-2N_0c_+}\big)\\
&\lesssim \varepsilon_0^32^{-(2N_0-8)c}2^{9b/10}.
\end{split}
\end{equation*}
It follows easily that the corresponding sums are controlled as claimed in \eqref{nh2}. Therefore it remains to prove that for any $k\in\mathbb{Z}$, $m\in\{0,\ldots,L+1\}$, and $(\mu,\nu,\sigma)\in \{(+,+,+),(+,+,-),(+,-,-),(-,-,-)\}$,
\begin{equation}\label{nh6}
\sum_{j\geq M(k,m)}2^j\Big\|\varphi_j\cdot \int_{\mathbb{R}}q_m(s)P_kF^{\mu\nu\sigma}_j(s)\,ds\Big\|_{L^2}\lesssim \varepsilon_0^32^{-10k_+}2^{-m/1000},
\end{equation}
where, with $Q_j:=P_{[-2j/3,j/10]}$,
\begin{equation}\label{nh5}
\widehat{P_kF^{\mu\nu\sigma}_j}(\xi,s):=\varphi_k(\xi)e^{is\Lambda(\xi)}\int_{\mathbb{R}^2\times\mathbb{R}^2}m_{\mu\nu\sigma}(\xi,\eta,\chi)\widehat{Q_jU_\mu}(\xi-\eta,s)\widehat{Q_jU_\nu}(\eta-\chi,s)\widehat{Q_jU_\sigma}(\chi,s)\,d\eta d\chi.
\end{equation}

{\bf{Step 4.}} The claim \eqref{nh6} is proved in Lemma \ref{KerLem}, see \eqref{nh30}, if $(\mu,\nu,\sigma)=(+,+,-)$. In the remaining cases, using first the bound \eqref{nh29.5}, it remains to control the sum over $j\in(M(k,m),M'(k,m)]$. More precisely, we have to prove that
\begin{equation}\label{nh9}
\sum_{M(k,m)< j\leq M'(k,m)}2^j\Big\|\varphi_j\cdot \int_{\mathbb{R}}q_m(s)P_kF^{\mu\nu\sigma}_j(s)\,ds\Big\|_{L^2}\lesssim \varepsilon_0^32^{-10k_+}2^{-m/1000}
\end{equation}
for any $k\in\mathbb{Z}$, $m\in\{0,\ldots,L+1\}$, and $(\mu,\nu,\sigma)\in \{(+,+,+),(+,-,-),(-,-,-)\}$. Since $M(k,m)=M'(k,m)$ if $m\leq\max(-21k/20,10k)$, we may assume that
\begin{equation*}
m\geq \max(-21k/20,10k),\qquad M'(k,m)=D+m.
\end{equation*}

The bound \eqref{nh9} follows easily if $m=0$ or $m=L+1$, due to the support properties of $q_0$ and $q_{L+1}$. Indeed, if $m\in\{0,L+1\}$ then
\begin{equation*}
\sum_{M(k,m)< j\leq D+m}2^j\Big\|\varphi_j\cdot \int_{\mathbb{R}}q_m(s)P_kF^{\mu\nu\sigma}_j(s)\,ds\Big\|_{L^2}\lesssim 2^m\sup_{s\in[2^{m-1},2^{m+1}]}\sup_{M(k,m)< j\leq D+m}\|P_kF^{\mu\nu\sigma}_j(s)\|_{L^2}.
\end{equation*}
We use Lemma \ref{tech3} with $r=2$, $l=-100j$ and appropriate choices of $p_1,p_2,p_3$ to show that
\begin{equation*}
\|P_kF^{\mu\nu\sigma}_j(s)\|_{L^2}\lesssim (1+m)^62^{-2m}2^{-11k_+}.
\end{equation*}
Therefore, for \eqref{nh9} it remains to prove that if 
\begin{equation}\label{nh10}
m\geq \max(-21k/20,100k),\qquad k_1,k_2,k_3\in[-2(m+D)/3,(m+D)/10],\qquad m\in\{D,\ldots,L\},
\end{equation}
and
\begin{equation*}
\widehat{P_kF^{\mu\nu\sigma}_{k_1,k_2,k_3}}(\xi,s):=\varphi_k(\xi)e^{is\Lambda(\xi)}\int_{\mathbb{R}^2\times\mathbb{R}^2}m_{\mu\nu\sigma}(\xi,\eta,\chi)\widehat{P_{k_1}U_\mu}(\xi-\eta,s)\widehat{P_{k_2}U_\nu}(\eta-\chi,s)\widehat{P_{k_3}U_\sigma}(\chi,s)\,d\eta d\chi,
\end{equation*}
$(\mu,\nu,\sigma)\in \{(+,+,+),(+,-,-),(-,-,-)\}$, then
\begin{equation}\label{nh11}
\Big\|\int_{\mathbb{R}}q_m(s)\widehat{P_kF^{\mu\nu\sigma}_{k_1,k_2,k_3}}(s)\,ds\Big\|_{L^2}\lesssim \varepsilon_0^32^{-10k_+}2^{-101m/100}.
\end{equation}

{\bf{Step 5.}} We rewrite
\begin{equation*}
\widehat{P_kF^{\mu\nu\sigma}_{k_1,k_2,k_3}}(\xi,s)=\int_{\mathbb{R}^2\times\mathbb{R}^2}e^{is\Phi_{\mu\nu\sigma}(\xi,\eta,\chi)}P_{k,k_1,k_2,k_3}^{\mu\nu\sigma}(\xi,\eta,\chi)\widehat{P_{k_1}V_\mu}(\xi-\eta,s)\widehat{P_{k_2}V_\nu}(\eta-\chi,s)\widehat{P_{k_3}V_\sigma}(\chi,s)\,d\eta d\chi,
\end{equation*}
where $P_{k,k_1,k_2,k_3}^{\mu\nu\sigma}$ is as in \eqref{lo31}, and
\begin{equation}\label{nh17}
\Phi_{\mu\nu\sigma}(\xi,\eta,\chi):=\Lambda(\xi)-\mu\Lambda(\xi-\eta)-\nu\Lambda(\eta-\chi)-\sigma\Lambda(\chi).
\end{equation}
To prove \eqref{nh11} we integrate by parts in $s$; the key observation is that
\begin{equation}\label{nh20}
\Phi_{\mu\nu\sigma}(\xi,\eta,\chi)\gtrsim \frac{1}{1+\min(|\xi|,|\xi-\eta|,|\eta-\chi|,|\chi|)}\qquad\text{ for any }\xi,\eta,\chi\in\mathbb{R}^2,
\end{equation}
for $(\mu,\nu,\sigma)\in \{(+,+,+),(+,-,-),(-,-,-)\}$. This follows easily from \eqref{la2}. Therefore, for any $\xi\in\mathbb{R}^2$,
\begin{equation*}
\begin{split}
\int_{\mathbb{R}}q_m(s)&\widehat{P_kF^{\mu\nu\sigma}_{k_1,k_2,k_3}}(\xi,s)\,ds=c\int_{\mathbb{R}}\int_{\mathbb{R}^2\times\mathbb{R}^2}e^{is\Phi_{\mu\nu\sigma}(\xi,\eta,\chi)}\\
&\frac{P_{k,k_1,k_2,k_3}^{\mu\nu\sigma}(\xi,\eta,\chi)}{\Phi_{\mu\nu\sigma}(\xi,\eta,\chi)}\frac{d}{ds}\big[q_m(s)\widehat{P_{k_1}V_\mu}(\xi-\eta,s)\widehat{P_{k_2}V_\nu}(\eta-\chi,s)\widehat{P_{k_3}V_\sigma}(\chi,s)\big]\,d\eta d\chi ds.
\end{split}
\end{equation*}
Using the properties of the functions $q_m$ in \eqref{nh1}, it follows that the left-hand side of \eqref{nh11} is bounded by $I+II+III+IV$ where
\begin{equation*}
\begin{split}
&I:=\sup_{s\in\mathrm{supp}\,q_m}\Big\|\int_{\mathbb{R}^2\times\mathbb{R}^2}\frac{P_{k,k_1,k_2,k_3}^{\mu\nu\sigma}(\xi,\eta,\chi)}{\Phi_{\mu\nu\sigma}(\xi,\eta,\chi) }\widehat{P_{k_1}U_\mu}(\xi-\eta,s)\widehat{P_{k_2}U_\nu}(\eta-\chi,s)\widehat{P_{k_3}U_\sigma}(\chi,s)\,d\eta d\chi\Big\|_{L^2_\xi},\\
&II:=2^m\sup_{s\in\mathrm{supp}\,q_m}\Big\|\int_{\mathbb{R}^2\times\mathbb{R}^2}\frac{P_{k,k_1,k_2,k_3}^{\mu\nu\sigma}(\xi,\eta,\chi)}{\Phi_{\mu\nu\sigma}(\xi,\eta,\chi) }\widehat{P_{k_1}Z_\mu}(\xi-\eta,s)\widehat{P_{k_2}U_\nu}(\eta-\chi,s)\widehat{P_{k_3}U_\sigma}(\chi,s)\,d\eta d\chi\Big\|_{L^2_\xi},\\
&III:=2^m\sup_{s\in\mathrm{supp}\,q_m}\Big\|\int_{\mathbb{R}^2\times\mathbb{R}^2}\frac{P_{k,k_1,k_2,k_3}^{\mu\nu\sigma}(\xi,\eta,\chi)}{\Phi_{\mu\nu\sigma}(\xi,\eta,\chi) }\widehat{P_{k_1}U_\mu}(\xi-\eta,s)\widehat{P_{k_2}Z_\nu}(\eta-\chi,s)\widehat{P_{k_3}U_\sigma}(\chi,s)\,d\eta d\chi\Big\|_{L^2_\xi},\\
&IV:=2^m\sup_{s\in\mathrm{supp}\,q_m}\Big\|\int_{\mathbb{R}^2\times\mathbb{R}^2}\frac{P_{k,k_1,k_2,k_3}^{\mu\nu\sigma}(\xi,\eta,\chi)}{\Phi_{\mu\nu\sigma}(\xi,\eta,\chi) }\widehat{P_{k_1}U_\mu}(\xi-\eta,s)\widehat{P_{k_2}U_\nu}(\eta-\chi,s)\widehat{P_{k_3}Z_\sigma}(\chi,s)\,d\eta d\chi\Big\|_{L^2_\xi},
\end{split}
\end{equation*}
and
\begin{equation*}
\widehat{Z_\mu}(\zeta,s):=e^{-\mu is\Lambda(\zeta)}\cdot \frac{d}{ds}[\widehat{V_\mu}(\zeta,s)].
\end{equation*}
Using the identity \eqref{formul1}, together with the formulas \eqref{ms1}--\eqref{ms3}, and the bounds \eqref{lo7} we estimate, for any $l\in\mathbb{Z}$ and $s\in\mathrm{supp}\,q_m$,
\begin{equation}\label{nh21}
\begin{split}
\|P_l Z_+(s)\|_{L^2}&\lesssim \sum_{(l_1,l_2)\in\mathcal{X}_l}\sum_{(\mu,\nu)\in\{(+,+),(+,-),(-,-)\}}\Big\|\int_{\mathbb{R}^2}m_{\mu\nu}(\xi,\chi)\widehat{P_{l_1}U_\mu}(\xi-\chi,s)\widehat{P_{l_2}U_\nu}(\chi,s)\,d\chi\Big\|_{L^2_\xi}\\
&\lesssim \sum_{(l_1,l_2)\in\mathcal{X}_l,\,l_1\leq l_2}(2^{l_2/4}+2^{3l_2})\min(\|P_{l_1}U_+\|_{L^2}\|P_{l_2}U_+\|_{L^\infty},\|P_{l_1}U_+\|_{L^\infty}\|P_{l_2}U_+\|_{L^2})\\
&\lesssim \varepsilon_0^22^{-m}2^{-8l_+}.
\end{split}
\end{equation}

We apply Lemma \ref{LastTri}, and the bounds \eqref{lo7} and \eqref{nh21}. Recalling \eqref{nh10} and \eqref{nh20},
\begin{equation*}
I+II+III+IV\lesssim \varepsilon_0^42^{-2m}2^{20k_+}2^{m/2},
\end{equation*}
and the desired bound \eqref{nh11} follows.
\end{proof}

\section{Technical estimates}\label{technical}

In this section we collect several technical estimates used in the rest of the paper. We start with estimates on the function $\Lambda$.

\begin{lemma}\label{tech0}
With $\Lambda(x)=\sqrt{a|x|^2+b}$, $\Lambda:\mathbb{R}^2\to\mathbb{R}$, we have
\begin{equation}\label{la1}
\sup_{x\in\mathbb{R}^2}(1+|x|)^{|\alpha|-1}|D^\alpha \Lambda(x)|\lesssim_{|\alpha|}1,
\end{equation}
\begin{equation}\label{la2}
\frac{1}{|\Lambda(x)\pm\Lambda(y)\pm\Lambda(x+y)|}\lesssim 1+\min(|x|,|y|,|x+y|)\qquad\text{ for any }x,y\in\mathbb{R}^2,
\end{equation}
and
\begin{equation}\label{la3}
\frac{|x-y|}{(1+|x|^3+|y|^3)}\lesssim |\nabla\Lambda(x)-\nabla\Lambda(y)|\lesssim |x-y|\qquad\text{ for any }x,y\in\mathbb{R}^2.
\end{equation}
Moreover, if $\Phi_{\mu\nu}(\xi,\eta):=\Lambda(\xi)-\mu\Lambda(\xi-\eta)-\nu\Lambda(\eta)$ as in \eqref{keyphi}, $(\mu,\nu)\in\{(+,+),(+,-),(-,-)\}$, then 
\begin{equation}\label{la4}
|\nabla_{\xi,\eta}\Phi_{\mu\nu}(\xi,\eta)|\lesssim |\Phi_{\mu\nu}(\xi,\eta)|\qquad\text{ for any }\xi,\eta\in\mathbb{R}^2.
\end{equation}
\end{lemma}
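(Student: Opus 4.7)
My plan is to prove the four bounds \eqref{la1}--\eqref{la4} in sequence by direct computation from the formula $\Lambda(x)=\sqrt{a|x|^2+b}$ together with $\nabla\Lambda(x)=ax/\Lambda(x)$. For \eqref{la1} I would induct on $|\alpha|$: each $D^\alpha\Lambda$ is a rational function whose numerator is a polynomial in $x$ and whose denominator is an odd power of $\Lambda(x)$, so the estimate $(1+|x|)^{|\alpha|-1}|D^\alpha\Lambda(x)|\lesssim 1$ follows from $\Lambda(x)\sim 1+|x|$.

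For \eqref{la2}, the key identity (obtained by expanding squares) is
\[
(\Lambda(u)+\Lambda(v))^2-\Lambda(u+v)^2=b+2\Lambda(u)\Lambda(v)-2au\cdot v,
\]
together with the bound $\Lambda(u)\Lambda(v)-au\cdot v\geq b$ obtained by squaring, which reduces to the identity $(a|u|^2+b)(a|v|^2+b)-(au\cdot v+b)^2=a^2|u\times v|^2+ab|u-v|^2\geq 0$. This yields $\Lambda(u)+\Lambda(v)-\Lambda(u+v)\geq 3b/(\Lambda(u)+\Lambda(v)+\Lambda(u+v))$. The case $(-,-)$ is then immediate because $\Phi_{--}\geq 3\sqrt{b}$. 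For $(+,-)$ I would combine this with the mean-value bound $|\Lambda(w)-\Lambda(w-z)|\leq\sqrt{a}|z|$: rewriting $\Phi_{+-}(\xi,\eta)=\Lambda(\xi)+[\Lambda(\eta)-\Lambda(\eta-\xi)]$ gives $\Phi_{+-}\geq\Lambda(\xi)-\sqrt{a}|\xi|=b/(\Lambda(\xi)+\sqrt{a}|\xi|)\gtrsim 1/(1+|\xi|)$, and symmetrically $\gtrsim 1/(1+|\eta|)$. The bound $\gtrsim 1/(1+|\xi-\eta|)$ requires splitting into two regimes: when $|\xi-\eta|\geq\min(|\xi|,|\eta|)$ it follows from the previous inequalities, while in the opposite regime the monotonicity $\Lambda(\xi),\Lambda(\eta)\geq\Lambda(\xi-\eta)$ gives $\Phi_{+-}\geq\Lambda(\xi)\geq\sqrt{b}$. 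Taking the maximum of the three bounds gives $|\Phi_{+-}|\gtrsim 1/(1+\min)$; the $(+,+)$ case is identical after noting $|\Phi_{++}|=\Lambda(\xi-\eta)+\Lambda(\eta)-\Lambda(\xi)$.

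For \eqref{la3}, I would compute
\[
D^2\Lambda(x)=\frac{a}{\Lambda(x)}I-\frac{a^2\,x\otimes x}{\Lambda(x)^3},
\]
whose eigenvalues are $a/\Lambda(x)$ in directions perpendicular to $x$ and $ab/\Lambda(x)^3$ in the direction of $x$, both positive. The upper bound is immediate from $\|D^2\Lambda\|\leq a/\sqrt{b}$. For the lower bound, write $\nabla\Lambda(x)-\nabla\Lambda(y)=M(x-y)$ with $M:=\int_0^1 D^2\Lambda(y+t(x-y))\,dt$ a positive-definite matrix, and use concavity of $\lambda_{\min}$ on positive-definite matrices to get $\lambda_{\min}(M)\geq\int_0^1\lambda_{\min}(D^2\Lambda(y+t(x-y)))\,dt\gtrsim 1/(1+\max(|x|,|y|))^3\gtrsim 1/(1+|x|^3+|y|^3)$.

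The bound \eqref{la4} will be the main obstacle. The case $(-,-)$ is immediate since $|\Phi_{--}|\geq 3\sqrt{b}$ and $|\nabla_{\xi,\eta}\Phi_{--}|\leq 4\sqrt{a}$. For $(+,+)$ and $(+,-)$ the difficulty is that when all three frequencies are comparable and large, both $|\Phi|$ and $|\nabla\Phi|$ decay, so the crude bound $|\nabla\Phi|\leq 2\sqrt{a}$ is too weak. My main tool would be the explicit identity
\[
\nabla\Lambda(x)-\nabla\Lambda(y)=\frac{a[(x-y)\Lambda(x)+x(\Lambda(y)-\Lambda(x))]}{\Lambda(x)\Lambda(y)},
\]
which combined with $|\Lambda(x)-\Lambda(y)|\leq\sqrt{a}|x-y|$ yields the refined bound $|\nabla\Lambda(x)-\nabla\Lambda(y)|\leq 2a|x-y|/\max(\Lambda(x),\Lambda(y))$. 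Applied to the differences appearing in $\nabla_\xi\Phi$ and $\nabla_\eta\Phi$ and combined with the sharp lower bound on $|\Phi|$ from the identity of the previous step, this reduces the ratio estimate to a case analysis on the relative sizes of $|\xi|,|\eta|,|\xi-\eta|$, using in particular the triangle inequality $|\eta|\leq 2\max(|\xi|,|\xi-\eta|)$ to convert the upper bound into one comparable with $|\Phi_{\mu\nu}|$.
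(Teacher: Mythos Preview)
Your treatment of \eqref{la1}--\eqref{la3} is correct and in places cleaner than the paper's. For \eqref{la2}, your algebraic identity $(\Lambda(u)+\Lambda(v))^2-\Lambda(u+v)^2=b+2[\Lambda(u)\Lambda(v)-au\cdot v]\geq 3b$ replaces the paper's case split on whether $|x+y|\leq\max(|x|,|y|)$. For \eqref{la3}, your Hessian-eigenvalue argument together with concavity of $\lambda_{\min}$ on positive matrices is a valid substitute for the paper's radial/angular formula
\[
|\nabla\Lambda(x)-\nabla\Lambda(y)|^2=[\lambda'(|x|)-\lambda'(|y|)]^2+2\lambda'(|x|)\lambda'(|y|)\Big(1-\frac{x\cdot y}{|x||y|}\Big),
\qquad \lambda(r)=\sqrt{ar^2+b}.
\]

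For \eqref{la4}, however, there is a genuine gap. Your refined bound $|\nabla\Lambda(x)-\nabla\Lambda(y)|\leq 2a|x-y|/\max(\Lambda(x),\Lambda(y))$ is not sharp enough in the critical regime where $|\xi|,|\eta|,|\xi-\eta|$ are all comparable to a large $R$ and nearly collinear. Take $\xi=(R,0)$, $\eta=(R/2,1)$, so $\xi-\eta=(R/2,-1)$. Your bound gives $|\nabla_\xi\Phi_{++}|=|\nabla\Lambda(\xi)-\nabla\Lambda(\xi-\eta)|\lesssim |\eta|/\Lambda(\xi)\sim 1$, while your own sharp formula for $|\Phi_{++}|$ yields
\[
|\Phi_{++}|=\frac{b+2[\Lambda(\eta)\Lambda(\xi-\eta)-a\eta\cdot(\xi-\eta)]}{\Lambda(\xi)+\Lambda(\eta)+\Lambda(\xi-\eta)}
=\frac{4a+3b}{\Lambda(\xi)+\Lambda(\eta)+\Lambda(\xi-\eta)}\sim\frac{1}{R}.
\]
No case analysis on relative sizes can close this, since the three magnitudes are all comparable. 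The true value of $|\nabla_\xi\Phi_{++}|$ here is $\sim 1/R$, not $\sim 1$, but seeing this requires precisely the angular information your bound discards.

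The paper handles this by keeping the radial/angular decomposition above. Writing $\alpha=|\xi-\eta|$, $\beta=|\eta|$, $\gamma=|\xi|$, the angular parts of both $|\nabla\Lambda(\xi)-\nabla\Lambda(\xi-\eta)|^2$ and $|\nabla\Lambda(\xi-\eta)-\nabla\Lambda(\eta)|^2$ are controlled by $\frac{(\alpha+\beta)^2-\gamma^2}{\alpha\beta}=\frac{(\alpha+\beta-\gamma)(\alpha+\beta+\gamma)}{\alpha\beta}$, while on the other side one has the sharper lower bound $|\Phi_{++}|\gtrsim \alpha^{-1}+\beta^{-1}+(\alpha+\beta-\gamma)$ in the range $1\leq\alpha,\beta\leq\gamma$ (obtained by writing $\Lambda(w)=\sqrt{a}|w|+[\Lambda(w)-\sqrt{a}|w|]$). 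The common factor $\alpha+\beta-\gamma$ on both sides is what makes the ratio bounded; your upper bound never produces it.
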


\begin{proof}[Proof of Lemma \ref{tech0}] The bound \eqref{la1} follows from the definition. To prove \eqref{la2} we estimate
\begin{equation*}
\Lambda(x)+\Lambda(y)-\Lambda(x+y)\geq\sqrt{b}
\end{equation*}
if $|x+y|\leq\max(|x|,|y|)$, and
\begin{equation*}
\begin{split}
\Lambda(x)+\Lambda(y)-\Lambda(x+y)&\geq (\sqrt{a|x|^2+b}-\sqrt{a}|x|)+(\sqrt{a|y|^2+b}-\sqrt{a}|y|)-(\sqrt{a|x+y|^2+b}-\sqrt{a}|x+y|)\\
&\geq \max[(\sqrt{a|x|^2+b}-\sqrt{a}|x|),(\sqrt{a|y|^2+b}-\sqrt{a}|y|)]\\
&\gtrsim \max[(1+|x|)^{-1},(1+|y|)^{-1}]
\end{split}
\end{equation*}
if $|x+y|\geq\max(|x|,|y|)$. The bound \eqref{la2} follows.

The upper bound in \eqref{la3} follows from the uniform bound $\Vert D^2\Lambda\Vert_{L^\infty}\lesssim 1$, see \eqref{la1}. To prove the lower bound in \eqref{la3} let $\lambda(r)=\sqrt{ar^2+b}$, $\Lambda(\xi)=\lambda(\vert\xi\vert)$. Then
\begin{equation}\label{EstimGrad}
\vert\nabla\Lambda(x)-\nabla\Lambda(y)\vert^2=\left[\lambda^\prime(\vert x\vert)-\lambda^\prime(\vert y\vert)\right]^2+2\lambda^\prime(\vert x\vert)\lambda^\prime(\vert y\vert)\Big(1-\frac{x}{|x|}\cdot\frac{y}{|y|}\Big).
\end{equation}
Notice that, for any $u\leq v\in\mathbb{R}_+$,
\begin{equation*}
\lambda^\prime(v)-\lambda^\prime(u)=\int_{u}^{v}\frac{ds}{(b+as^2)^{3/2}}\gtrsim\frac{v-u}{1+v^3}\quad\text{ and }\quad\lambda^\prime(v)\lambda^\prime(u)\gtrsim\frac{uv}{1+v^2}.
\end{equation*}
Therefore, using \eqref{EstimGrad},
\begin{equation*}
\vert\nabla\Lambda(x)-\nabla\Lambda(y)\vert^2\gtrsim\frac{(|x|-|y|)^2}{(1+|x|^3+|y|^3)^2}+\frac{2|x||y|-2x\cdot y}{1+|x|^2+|y|^2}\gtrsim \frac{|x-y|^2}{(1+|x|^3+|y|^3)^2},
\end{equation*}
as desired.

To prove \eqref{la4} we may assume that $\Phi_{\mu\nu}(\xi,\eta)\ll 1$ (otherwise the bound follows from \eqref{la1}). In particular, we may assume that $(\mu,\nu)\in\{(+,+),(+,-)\}$. Since $\Phi_{+-}(\xi,\eta)=-\Phi_{++}(\xi-\eta,\xi)$, it suffices to consider the case $(\mu,\nu)=(+,+)$. Estimating as in the proof of \eqref{la2},
\begin{equation}\label{la6}
|\Phi_{++}(\xi,\eta)|\gtrsim (1+|\eta|)^{-1}+(1+|\xi-\eta|)^{-1}+(|\eta|+|\xi-\eta|-|\xi|).
\end{equation}
On the other hand, using \eqref{EstimGrad}, if 
\begin{equation}\label{la7}
1\leq\min(|\eta|,|\xi-\eta|)\leq \max(|\eta|,|\xi-\eta|)\leq|\xi|
\end{equation}
then
\begin{equation*}
\begin{split}
|\nabla_{\xi,\eta}\Phi_{++}(\xi,\eta)|^2&\lesssim |\nabla\Lambda(\xi)-\nabla\Lambda(\xi-\eta)|^2+|\nabla\Lambda(\xi-\eta)-\nabla\Lambda(\eta)|^2\\
&\lesssim |\lambda'(\gamma)-\lambda'(\alpha)|^2+|\lambda'(\beta)-\lambda'(\alpha)|^2+\frac{(\alpha+\beta)^2-\gamma^2}{\alpha\beta}\\
&\lesssim \alpha^{-2}+\beta^{-2}+(\alpha+\beta-\gamma)(1/\alpha+1/\beta).
\end{split}
\end{equation*}
where $\alpha:=|\xi-\eta|$, $\beta:=|\eta|$, $\gamma:=|\xi|$. The desired bound \eqref{la4} follows using also \eqref{la6}, in the range described in \eqref{la7}. In the remaining range $|\Phi_{++}(\xi,\eta)|\gtrsim 1$, see \eqref{la6}, and the desired bound \eqref{la4} follows from \eqref{la1}.
\end{proof}

We show now that Riesz transforms map our main space $Y^{N_0}$ to itself.

\begin{lemma}\label{tech1}
Assume $m:\mathbb{R}^2\to\mathbb{C}$ satisfies the symbol-type estimates
\begin{equation}\label{km9}
\sup_{|a|\in[0,10]}\sup_{\xi\in\mathbb{R}^2}|\xi|^{|a|}|\partial^am(\xi)|\leq 1. 
\end{equation}
Then, for any $\phi\in Y^{N_0}$
\begin{equation}\label{km10}
\|\mathcal{F}^{-1}(m\cdot \widehat{f})\|_{Y^{N_0}}\lesssim \|f\|_{Y^{N_0}}.
\end{equation}
\end{lemma}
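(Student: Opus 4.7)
The $H^{N_0}$ bound is immediate from Plancherel combined with $\|m\|_{L^\infty}\leq 1$ (the $|a|=0$ case of the hypothesis), since $\|\mathcal{F}^{-1}(m\widehat{f})\|_{H^{N_0}}^2=\int|m(\xi)|^2|\widehat{f}(\xi)|^2(1+|\xi|^2)^{N_0}\,d\xi\leq\|f\|_{H^{N_0}}^2$. The real content is the $Z$-norm invariance, and for this it suffices to prove, uniformly in $k\in\mathbb{Z}$,
\begin{equation*}
\|P_k T_m f\|_{L^2}+\sum_{j\geq 0}2^j\|\varphi_j P_k T_m f\|_{L^2}\lesssim \|\tilde P_k f\|_{L^2}+\sum_{j'\geq 0}2^{j'}\|\varphi_{j'}\tilde P_k f\|_{L^2},
\end{equation*}
where $T_m f:=\mathcal{F}^{-1}(m\widehat{f})$ and $\tilde P_k:=P_{[k-1,k+1]}$; the weight $2^{k/10}+2^{10k}$ is comparable across $|k'-k|\leq 1$ and $\tilde P_k=\sum_{|k'-k|\leq 1}P_{k'}$, so summing then recovers $\|f\|_Z$ on the right.

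First I would use the frequency support to write $P_k T_m f=K_k\ast \tilde P_k f$ with $K_k:=\mathcal{F}^{-1}(\varphi_k m)$. The rescaling $\zeta:=\xi/2^k$ turns $\varphi_k m$ into a function of $\zeta$ whose $C^{10}$ norm is bounded uniformly in $k$ (precisely the content of the hypothesis on $m$), and ten integrations by parts then yield the pointwise kernel bound
\begin{equation*}
|K_k(z)|\lesssim 2^{2k}(1+2^k|z|)^{-10},
\end{equation*}
which gives $\|K_k\|_{L^1}\lesssim 1$ and $\|K_k\|_{L^\infty}\lesssim 2^{2k}$. The unweighted estimate $\|P_k T_m f\|_{L^2}\lesssim \|\tilde P_k f\|_{L^2}$ then follows from $\|m\|_{L^\infty}\leq 1$ and Plancherel.

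For the weighted piece I would fix a smooth partition $1=\sum_{j'\geq 0}\chi_{j'}$ on $\mathbb{R}^2$, with $\chi_{j'}$ supported in $|y|\sim 2^{j'}$ for $j'\geq 1$ and $\chi_0$ in $|y|\lesssim 1$, set $g:=\tilde P_k f$, and estimate the $L^2$ operator norm $A_{j,j'}$ of $\chi_{j'}g\mapsto \varphi_j(K_k\ast\chi_{j'}g)$. On the near diagonal $|j-j'|\leq 5$, Young's inequality with $\|K_k\|_{L^1}\lesssim 1$ gives $A_{j,j'}\lesssim 1$. Off the diagonal $|x-y|\sim 2^{\max(j,j')}$ on the joint support, and combining the pointwise kernel bound with $\|\chi_{j'}g\|_{L^1}\lesssim 2^{j'}\|\chi_{j'}g\|_{L^2}$ and $\|\varphi_j\|_{L^2}\lesssim 2^j$ yields
\begin{equation*}
A_{j,j'}\lesssim 2^{j+j'+2k}\quad\text{if }\max(j,j')\leq -k,\qquad A_{j,j'}\lesssim 2^{j+j'-8k-10\max(j,j')}\quad\text{if }\max(j,j')>-k.
\end{equation*}
An elementary geometric summation in $j$, treating the four sub-regimes according to the sign of $j-j'$ and of $k+\max(j,j')$, then shows $B_{j'}:=\sum_{j\geq 0}2^j A_{j,j'}\lesssim 2^{j'}$ uniformly in $j'$ and $k$. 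This is the desired weighted bound, since $\sum_{j'\geq 0}2^{j'}\|\chi_{j'}g\|_{L^2}\lesssim\|g\|_{L^2}+\sum_{j'\geq 0}2^{j'}\|\varphi_{j'}g\|_{L^2}$.

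The main technical obstacle is the uniformity of $B_{j'}\lesssim 2^{j'}$ in the low-frequency regime $k\ll 0$, where $K_k$ is essentially constant on a ball of radius $2^{-k}\gg 1$ and has no pointwise decay there. Inside that ball only $\|K_k\|_{L^\infty}\lesssim 2^{2k}$ and the volume factor $2^{j'}$ from $\|\chi_{j'}g\|_{L^1}$ are available; outside it, the full tenth-order polynomial decay of $K_k$ is essential. Balancing these two estimates against the $2^j$ weight is what closes the argument and explains why ten orders of symbol regularity are assumed in the hypothesis.
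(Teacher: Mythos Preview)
Your argument is correct and rests on the same two ingredients as the paper's: the kernel bound $|K_k(z)|\lesssim 2^{2k}(1+2^k|z|)^{-N}$ obtained from the symbol hypothesis, and a physical-space decomposition that trades spatial separation for kernel decay. The organization differs slightly. The paper, rather than running a full double dyadic sum with a Schur-type bound $\sum_j 2^jA_{j,j'}\lesssim 2^{j'}$, splits asymmetrically: for output localized at scale $j$ it separates the input into $|y|\leq 2^{j-2}$ and $|y|\geq 2^{j-4}$. The far piece is controlled in one stroke by $2^{2k}(1+2^{k+j})^{-4}\|P_kf\|_{L^1}$, using that $\|P_kf\|_{L^1}\lesssim (2^{k/10}+2^{10k})^{-1}\|f\|_Z$; the near piece is a simple double sum $\sum_j\sum_{j'\geq j-4}2^j\|\varphi_{j'}P_kf\|_{L^2}$ that collapses after swapping the order.

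Your symmetric Schur approach is perfectly fine and arguably more transparent; the paper's shortcut of collapsing the far contribution into a single $L^1$ norm is a bit quicker but relies on already knowing that the $Z$-norm controls $L^1$ at each frequency. One minor remark: the paper only uses four derivatives of $m$ (kernel decay of order $4$ suffices), so your closing paragraph slightly overstates the necessity of the full tenth-order regularity.
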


\begin{proof}[Proof of Lemma \ref{tech1}] Using Plancherel's theorem
\begin{equation*}
\|\mathcal{F}^{-1}(m\cdot \widehat{f})\|_{H^{N_0}}\lesssim \|f\|_{H^{N_0}}.
\end{equation*}
To control the $Z$-norm, we fix $k\in\mathbb{Z}$, let $g=\mathcal{F}^{-1}(m\cdot \widehat{f})$, and write 
\begin{equation*}
\varphi_j(x)P_kg(x)=\int_{\mathbb{R}^2}P_kf(y)\cdot \varphi_j(x)K_k(x-y)\,dy\qquad\text{ where }\qquad K_k(z)=c\int_{\mathbb{R}^2}e^{iz\xi}m(\xi)\varphi_{[k-1,k+1]}(\xi)\,d\xi.
\end{equation*}
In view of \eqref{km9}, $|K_k(z)|\lesssim 2^{2k}(1+2^k|z|)^{-4}$, therefore
\begin{equation*}
\begin{split}
|\varphi_j(x)P_kg(x)|&\lesssim |\varphi_j(x)|\int_{|y|\leq 2^{j-2}}|P_kf(y)|\cdot |K_k(x-y)|\,dy+\sum_{j'\geq j-4}\int_{\mathbb{R}^2}|\varphi_{j'}(y)P_kf(y)|\cdot|K_k(x-y)|\,dy\\
&\lesssim |\varphi_j(x)|\cdot 2^{2k}(1+2^{k+j})^{-4}\cdot \|P_kf\|_{L^1}+\sum_{j'\geq j-4}(|\varphi_{j'}\cdot P_kf|\ast |K_k|)(x).
\end{split}
\end{equation*}
Therefore
\begin{equation*}
\begin{split}
&(2^{k/10}+2^{10k})\sum_{j\in\mathbb{Z}_+}2^j\|\varphi_j\cdot P_kg\|_{L^2}\\
&\lesssim (2^{k/10}+2^{10k})\sum_{j\geq 0}2^{2j+2k}(1+2^{k+j})^{-4}\|P_kf\|_{L^1}+(2^{k/10}+2^{10k})\sum_{j\geq 0}\sum_{j'\geq j-4}2^j\|\varphi_{j'}\cdot P_kf\|_{L^2}\\
&\lesssim \|f\|_{Z},
\end{split}
\end{equation*}
as desired.
\end{proof}

Our next lemma is used several times in the proof of Proposition \ref{LocExLemma}.

\begin{lemma}\label{tech1.5}
(i) For any multi-indices $\alpha,\beta$ with $|\alpha|,|\beta|\geq 1$ and any $f,g\in H^{|\alpha|+|\beta|}$ we have
\begin{equation*}
 \|\Lambda(D^\alpha fD^\beta g)\|_{L^2}
\lesssim_{|\al|+|\be|} \| \nabla f\|_{L^\infty}\Vert g\Vert_{H^{|\al|+|\be|}}+\|\nabla g\|_{L^\infty}\| f\|_{H^{|\alpha|+|\beta|}}.
\end{equation*}

(ii) With the norm $Z'$ defined as in \eqref{Z'norm}, we have
\begin{equation}\label{easy}
 \begin{split}
&\sum_{j=1}^2\big[\|\partial_jf\|_{L^\infty}+\|R_jf\|_{Z'}\big]\lesssim \|f\|_{Z'},\\
&\sum_{k\in\mathbb{Z}}(2^k+2^{k/10})\|P_k(f\cdot g)\|_{L^\infty}\lesssim \|f\|_{Z'}\|g\|_{H^1}+\|f\|_{H^1}\|g\|_{Z'}.
 \end{split}
\end{equation}

(iii) If $f,g\in H^2$ then
\begin{equation}\label{easy2}
\|\Lambda(f\cdot g)-\Lambda f\cdot g\|_{L^2}\lesssim \|f\|_{L^2}\|g\|_{Z'}.
\end{equation}
\end{lemma}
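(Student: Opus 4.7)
\textbf{Proof plan for Lemma \ref{tech1.5}.}

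\textbf{Part (iii), the core estimate.} The key point is the symbol identity: on the Fourier side,
\begin{equation*}
\mathcal{F}\bigl[\Lambda(fg)-(\Lambda f)g\bigr](\xi)=\int_{\mathbb{R}^2}\bigl[\Lambda(\xi)-\Lambda(\xi-\eta)\bigr]\widehat{f}(\xi-\eta)\widehat{g}(\eta)\,d\eta,
\end{equation*}
and since $\nabla\Lambda(\zeta)=a\zeta/\Lambda(\zeta)$ satisfies $|\nabla\Lambda(\zeta)|\lesssim\min(1,|\zeta|)$ (cf.\ Lemma \ref{tech0}), the mean value theorem gives $|\Lambda(\xi)-\Lambda(\xi-\eta)|\lesssim|\eta|\min\bigl(1,|\xi|+|\xi-\eta|\bigr)$. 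I would first decompose $g=\sum_k P_kg$, reducing the problem to controlling $\sum_k\|[\Lambda,m_{P_kg}]f\|_{L^2}$. For each $k$ I further Littlewood--Paley decompose $f=\sum_j P_jf$; in the regime $j\geq k+5$ the outputs of $[\Lambda,m_{P_kg}]P_jf$ are frequency-localized near $2^j$, so an almost-orthogonality argument in $j$ together with the Coifman--Meyer-type bound $\|[\Lambda,m_{P_kg}]P_jf\|_{L^2}\lesssim 2^k\|P_jf\|_{L^2}\|P_kg\|_{L^\infty}$ controls the sum by $2^k\|f\|_{L^2}\|P_kg\|_{L^\infty}$. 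The complementary sum $\sum_{j\leq k+4}$ collapses to $[\Lambda,m_{P_kg}]P_{\leq k+4}f$, which I bound directly using the symbol estimate above (obtaining the sharper factor $2^{2k}$ when $k\leq 0$). Summing over $k$ using
\begin{equation*}
\sum_k 2^k(2^{k/2}+2^{2k})^{-1}\lesssim 1
\end{equation*}
closes the argument.

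\textbf{Part (ii).} The first two inequalities are direct: $\|\partial_jf\|_{L^\infty}\leq\sum_k 2^k\|P_kf\|_{L^\infty}\lesssim\|f\|_{Z'}\sum_k 2^k(2^{k/2}+2^{2k})^{-1}\lesssim\|f\|_{Z'}$, and because $R_j$ commutes with $P_k$ with a kernel whose $L^1$-norm is uniform in $k$, $\|P_kR_jf\|_{L^\infty}\lesssim\|P_kf\|_{L^\infty}$, giving $\|R_jf\|_{Z'}\lesssim\|f\|_{Z'}$. For the product bound, I expand $P_k(fg)$ into Littlewood--Paley paraproducts (low-high, high-low, high-high) and apply Bernstein to exchange an $L^2$ factor for an $L^\infty$ factor at the cost of a $2^k$; the $Z'$ bound $\|P_kh\|_{L^\infty}\lesssim(2^{k/2}+2^{2k})^{-1}\|h\|_{Z'}$ on one input and the $H^1$ bound on the other provide the needed decay to sum geometrically in $k$ and in the paraproduct index.

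\textbf{Part (i).} I would apply Bony's paraproduct decomposition to $D^\alpha f\cdot D^\beta g$ and control the $L^2$-norm of $\Lambda$ applied to each piece. The hypothesis $|\alpha|,|\beta|\geq 1$ is used to pull out one derivative from each factor: $\|D^\alpha P_{k_1}f\|_{L^\infty}\lesssim 2^{(|\alpha|-1)k_1}\|\nabla f\|_{L^\infty}$ for $k_1\leq 0$, and the analogous bound for $g$. After applying $\Lambda$ (size $\lesssim 2^{\max(k_1,k_2)}+1$ at the output) and combining an $L^\infty$ bound on one factor with $\|P_k h\|_{L^2}\lesssim 2^{-\sigma k_+}\|h\|_{H^\sigma}$ on the other, the resulting geometric series in $k_1,k_2$ sums to the desired right-hand side, where by symmetry the low-high piece produces the term $\|\nabla f\|_{L^\infty}\|g\|_{H^\sigma}$ and the high-low piece produces $\|\nabla g\|_{L^\infty}\|f\|_{H^\sigma}$; the high-high piece is treated with either.

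\textbf{Main obstacle.} The most delicate step is part (iii), where the gain $2^k$ from the commutator must exactly match the weight $(2^{k/2}+2^{2k})$ defining the $Z'$-norm in order to be summable at both low and high frequencies: the products $2^k\cdot 2^{-k/2}=2^{k/2}$ (as $k\to-\infty$) and $2^k\cdot 2^{-2k}=2^{-k}$ (as $k\to+\infty$) are both summable, but only because of the improved symbol bound $|\Lambda(\xi)-\Lambda(\xi-\eta)|\lesssim 2^{2k}$ in the low-frequency regime $|\xi|,|\xi-\eta|\lesssim 2^k$. Establishing this cancellation cleanly via the almost-orthogonality decomposition is the heart of the argument.
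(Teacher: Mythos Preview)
Your proposal is correct and follows essentially the same route as the paper: Littlewood--Paley paraproduct decomposition in all three parts, with the commutator gain $|\Lambda(\xi)-\Lambda(\xi-\eta)|\lesssim|\eta|$ driving part~(iii). The organizational differences are minor: the paper splits part~(iii) into three regions $F_1$ (diagonal $|k_1-k_2|\le10$), $F_2$ (low--high $k_1\le k_2-11$), $F_3$ (high--low $k_2\le k_1-11$); for $F_1,F_2$ it uses the algebraic trick $\Lambda(fg)-\Lambda f\cdot g=(\Lambda-\sqrt{b})(fg)-(\Lambda-\sqrt{b})f\cdot g$ and bounds each term separately via $|\Lambda(\zeta)-\sqrt{b}|\lesssim|\zeta|$, bypassing the commutator structure there; for $F_3$ it writes out the physical-space kernel and integrates by parts to get $\|F^3_{k_1,k_2}\|_{L^2}\lesssim 2^{k_2}\|P_{k_1}f\|_{L^2}\|P_{k_2}g\|_{L^\infty}$, which is exactly your Coifman--Meyer-type bound made explicit. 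Your almost-orthogonality-in-$j$ step for the high--low piece is implicit in the paper's summation.

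One small correction to your ``main obstacle'' paragraph: the refined bound $|\Lambda(\xi)-\Lambda(\xi-\eta)|\lesssim 2^{2k}$ in the low-frequency regime is \emph{not} needed. The plain commutator gain $2^k$ already gives $\sum_k 2^k(2^{k/2}+2^{2k})^{-1}<\infty$, as you yourself compute in the preceding sentence. The paper never invokes the $2^{2k}$ improvement either.
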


\begin{proof}[Proof of Lemma \ref{tech1.5}] For part (i), we estimate, for any $k\in\mathbb{Z}_+$,
\begin{equation*}
\begin{split}
\|P_k[\Lambda(D^\alpha fD^\beta g)]\|_{L^2}&\lesssim 2^{k}\|P_k(D^\alpha fD^\beta g)\|_{L^2}\\
&\lesssim 2^{k}\|P_{\leq k-4}D^\alpha f\|_{L^\infty}\|P_{[k-4,k+4]}D^\beta g\|_{L^2}
+2^{k}\|P_{[k-4,k+4]}D^\alpha f\|_{L^2}\|P_{\leq k-4}D^\beta g\|_{L^\infty}\\
&+2^{k}\sum_{k_1,k_2\geq k-3,\,|k_1-k_2|\leq 8}\|P_{k_1}D^\alpha f\|_{L^2}\|P_{k_2}D^\beta g\|_{L^\infty}\\
&\leq \sum_{k'\geq k-4}2^{k-k'}\|\nabla f\|_{L^\infty}\|P_{k'}g\|_{H^{|\al|+|\be|}}
+\sum_{k'\geq k-4}2^{k-k'}\|\nabla g\|_{L^\infty}\|P_{k'}f\|_{H^{|\al|+|\be|}},
\end{split}
\end{equation*}
where $P_{\leq l}=P_{(-\infty,l]}$. A similar calculation shows that 
\begin{equation*}
\|P_{\leq 0}[\Lambda(D^\alpha fD^\beta g)]\|_{L^2}\lesssim_{|\al|+|\be|} \| \nabla f\|_{L^\infty}\Vert g\Vert_{H^{|\al|+|\be|}}+\|\nabla g\|_{L^\infty}\| f\|_{H^{|\alpha|+|\beta|}},
\end{equation*}
and the desired estimate follows.
 
The first inequality in \eqref{easy} follows from the definitions, since
\begin{equation*}
 \|P_kR_jf\|_{L^\infty}+2^{-k}\|P_k\partial_jf\|_{L^\infty}\lesssim \|P_kf\|_{L^\infty},\qquad k\in\mathbb{Z},j\in\{1,2\}.
\end{equation*}
To prove the second inequality we estimate, for any $k\in\mathbb{Z}$
\begin{equation*}
\begin{split}
&\|P_k(f\cdot g)\|_{L^\infty}\\
&\lesssim \sum_{|k_2-k|\leq 3}\|P_{\leq k-4}f\cdot P_{k_2}g\|_{L^\infty}+\sum_{|k_1-k|\leq 3}\|P_{k_1}f\cdot P_{\leq k-4}g\|_{L^\infty}+\sum_{k_1,k_2\geq k-3,\,|k_1-k_2|\leq 8}\|P_{k_1}f\cdot P_{k_2}g\|_{L^\infty}\\
&\lesssim \sum_{k'\geq k-3}\min(2^{k'},2^{k'/2})\|f\|_{H^1}(2^{k'/2}+2^{2k'})^{-1}\|g\|_{Z'}+\min(2^k,2^{k/2})\|g\|_{H^1}(2^{k/2}+2^{2k})^{-1}\|f\|_{Z'}\\
&\lesssim \min(1,2^{-3k/2})\big[\|f\|_{Z'}\|g\|_{H^1}+\|f\|_{H^1}\|g\|_{Z'}\big].
\end{split}
\end{equation*}
The bound in the second line of \eqref{easy} follows.

To prove part (iii), we decompose
\begin{equation*}
\begin{split}
&\Lambda(f\cdot g)-\Lambda f\cdot g=F_1+F_2+F_3,\\
&F_1=\sum_{k_1,k_2\in\mathbb{Z},\,|k_1-k_2|\leq 10}(\Lambda-\sqrt{b})(P_{k_1}f\cdot P_{k_2}g)-(\Lambda-\sqrt{b}) P_{k_1}f\cdot P_{k_2}g,\\
&F_2=\sum_{k_1,k_2\in\mathbb{Z},\,k_1\leq k_2-11}(\Lambda-\sqrt{b})(P_{k_1}f\cdot P_{k_2}g)-(\Lambda-\sqrt{b}) P_{k_1}f\cdot P_{k_2}g,\\
&F_3=\sum_{k_1,k_2\in\mathbb{Z},\,k_2\leq k_1-11}\Lambda(P_{k_1}f\cdot P_{k_2}g)-\Lambda P_{k_1}f\cdot P_{k_2}g.
\end{split}
\end{equation*}
Recalling that $\Lambda=\sqrt{a|\nabla|^2+b}$, we estimate easily
\begin{equation*}
\|F_1\|_{L^2}\lesssim \sum_{k_1,k_2\in\mathbb{Z},\,|k_1-k_2|\leq 10}2^{k_1}\|P_{k_1}f\|_{L^2}\cdot \|P_{k_2}g\|_{L^\infty}\lesssim \|f\|_{L^2}\|g\|_{Z'}
\end{equation*}
and
\begin{equation*}
\|F_2\|_{L^2}\lesssim \sum_{k_2\in\mathbb{Z}}2^{k_2}\|P_{\leq k_2-11}f\|_{L^2}\cdot \|P_{k_2}g\|_{L^\infty}\lesssim \|f\|_{L^2}\|g\|_{Z'}.
\end{equation*}

To estimate $\|F_3\|_{L^2}$ we write $F_3=c\sum_{k_1,k_2\in\mathbb{Z},\,k_2\leq k_1-11}F^3_{k_1,k_2}$, where
\begin{equation*}
\begin{split}
F^3_{k_1,k_2}(x)&:=\int_{\mathbb{R}^2\times\mathbb{R}^2}e^{ix\cdot\xi}(\Lambda(\xi)-\Lambda(\xi-\eta))\widehat{P_{k_1}f}(\xi-\eta)\widehat{P_{k_2}g}(\eta)\,d\xi d\eta\\
&=\int_{\mathbb{R}^2\times\mathbb{R}^2}P_{k_1}f(y)P_{k_2}g(z)G_{k_1,k_2}(x,y,z)\,dy dz,
\end{split}
\end{equation*}
and
\begin{equation*}
\begin{split}
G_{k_1,k_2}(x,y,z)&:=\int_{\mathbb{R}^2\times\mathbb{R}^2}e^{ix\cdot\xi}e^{-iy\cdot(\xi-\eta)}e^{-iz\cdot\eta}(\Lambda(\xi)-\Lambda(\xi-\eta))\varphi_{[k_1-1,k_1+1]}(\xi-\eta)\varphi_{[k_2-1,k_2+1]}(\eta)\,d\xi d\eta\\
&=\int_{\mathbb{R}^2\times\mathbb{R}^2}e^{i(x-y)\cdot\chi}e^{i(x-z)\cdot\eta}(\Lambda(\chi+\eta)-\Lambda(\chi))\varphi_{[k_1-1,k_1+1]}(\chi)\varphi_{[k_2-1,k_2+1]}(\eta)\,d\chi d\eta.
\end{split}
\end{equation*}
Since $|D^\alpha\Lambda(\xi)|\lesssim_{|\alpha|}(1+|\xi|)^{1-|\alpha|}$ for any $\xi\in\mathbb{R}^2$, by integration by parts in both $\chi$ and $\eta$ we estimate
\begin{equation*}
|G_{k_1,k_2}(x,y,z)|\lesssim 2^{2k_1}(1+2^{k_1}|x-y|)^{-3}\cdot 2^{3k_2}(1+2^{k_2}|x-z|)^{-3},
\end{equation*}
provided that $k_2\leq k_1-11$. Therefore
\begin{equation*}
\|F^3_{k_1,k_2}\|_{L^2}\lesssim 2^{k_2}\|P_{k_1}f\|_{L^2}\|P_{k_2}g\|_{L^\infty},
\end{equation*}
and the desired bound \eqref{easy2} follows.
\end{proof}

We prove now an important bilinear estimate, which is used repeatedly in section \ref{Bootstrap}. Recall the definition $k_+=\max(k,0)$ for any $k\in\mathbb{Z}$.

\begin{lemma}\label{tech2}
For any $(\mu,\nu)\in\{(+,+),(+,-),(-,-)\}$, $k,k_1,k_2\in\mathbb{Z}$, and $p,q,r\in[1,\infty]$, $r\geq 2$, satisfying $1/p+1/q+1/r=1$ we have
\begin{equation}\label{km1}
\begin{split}
\Big\|&\int_{\mathbb{R}^2}P_{k,k_1,k_2}^{\mu\nu}(\xi,\eta)\widehat{f}(\xi-\eta)\widehat{g}(\eta)\,d\eta\Big\|_{L^2_\xi}\\
&\lesssim (1+2^{2k})(1+2^{3\min(k_1,k_2)})(2^{k}+2^{k_1}+2^{k_2})\|f\|_{L^p}\|g\|_{L^q}2^{k(1-2/r)},
\end{split}
\end{equation}
where $P_{k,k_1,k_2}^{\mu\nu}$ is as in \eqref{km2}.
\end{lemma}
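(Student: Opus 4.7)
I will recast the frequency-side integral as a physical-space bilinear operator via Plancherel, then combine H\"older on the spectral support of $F$, Hausdorff--Young, and Young's convolution inequality to reduce the lemma to a single $L^1$ estimate on the inverse Fourier transform of the symbol.

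Set $F(\xi):=\int_{\mathbb{R}^2}P_{k,k_1,k_2}^{\mu\nu}(\xi,\eta)\widehat{f}(\xi-\eta)\widehat{g}(\eta)\,d\eta$ and $T(f,g):=\mathcal{F}^{-1}F$. Unfolding the Fourier representations of $f,g$ and changing variables shows that
\[
T(f,g)(x)=c\int_{\mathbb{R}^2\times\mathbb{R}^2}Q(a,b)\,f(x-a)\,g(x-a-b)\,da\,db,\qquad Q:=\mathcal{F}^{-1}_{(\xi,\eta)}\big[P_{k,k_1,k_2}^{\mu\nu}\big].
\]
Since $P_{k,k_1,k_2}^{\mu\nu}$ carries the factor $\varphi_k(\xi)$, the function $F$ is supported in an annulus of area $\lesssim 2^{2k}$, so H\"older on this support together with Plancherel give $\|F\|_{L^2_\xi}\lesssim 2^{k(1-2/r)}\|F\|_{L^r_\xi}$ for $r\ge 2$. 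Hausdorff--Young then yields $\|F\|_{L^r_\xi}=\|\mathcal{F}T(f,g)\|_{L^r_\xi}\lesssim\|T(f,g)\|_{L^{r'}_x}$, and Minkowski followed by H\"older in $x$ (using $\tfrac{1}{p}+\tfrac{1}{q}=\tfrac{1}{r'}=1-\tfrac{1}{r}$) produces
\[
\|T(f,g)\|_{L^{r'}_x}\le\int_{\mathbb{R}^2\times\mathbb{R}^2}|Q(a,b)|\,\|f(\cdot-a)\,g(\cdot-a-b)\|_{L^{r'}_x}\,da\,db\le\|Q\|_{L^1(\mathbb{R}^4)}\|f\|_{L^p}\|g\|_{L^q}.
\]

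Chaining the three steps reduces the lemma to the kernel bound
\[
\|Q\|_{L^1(\mathbb{R}^2\times\mathbb{R}^2)}\lesssim (1+2^{2k})(1+2^{3\min(k_1,k_2)})(2^k+2^{k_1}+2^{k_2}),
\]
which is the main obstacle. Writing $P_{k,k_1,k_2}^{\mu\nu}=A\cdot\varphi_k(\xi)\varphi_{[k_1-1,k_1+1]}(\xi-\eta)\varphi_{[k_2-1,k_2+1]}(\eta)$ with $A:=m_{\mu\nu}/\Phi_{\mu\nu}$, the formulas \eqref{ms1}--\eqref{ms3} give $|m_{\mu\nu}|\lesssim 2^k+2^{k_1}+2^{k_2}$ on the support of the cutoffs, while \eqref{la2} gives $|\Phi_{\mu\nu}|^{-1}\lesssim 1+2^{\min(k,k_1,k_2)}$, producing the needed $L^\infty$ bound on the symbol. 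To upgrade this to an $L^1$ bound on $Q$ I would integrate by parts in $\xi$ and $\eta$: the derivative bound \eqref{la4} together with \eqref{la1} controls derivatives of $1/\Phi_{\mu\nu}$, while the Riesz-transform-type ratios $\xi\cdot\eta/(|\xi||\eta|)$ etc.\ entering $m_{\mu\nu}$ satisfy standard symbol estimates $|D^\alpha|\lesssim |\cdot|^{-|\alpha|}$. Each integration by parts produces rapid decay of $Q(a,b)$ past the reciprocal of the corresponding frequency scale. The $(1+2^{3\min(k_1,k_2)})$ factor then absorbs the extra singularities produced when derivatives fall on Riesz factors with arguments $\eta$ or $\xi-\eta$ at low frequencies, while $(1+2^{2k})$ is extra slack that accommodates all relative orderings of $k,k_1,k_2$ (both the $|\max(k_1,k_2)-k|\le 4$ and $\max(k_1,k_2)\ge k+4$ configurations) with a single uniform estimate.
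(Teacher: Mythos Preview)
Your approach is correct and essentially the same as the paper's. The paper argues by duality, pairing $F$ with a test function $h\in L^2$ and using the Bernstein bound $\|P_{[k-1,k+1]}h\|_{L^r}\lesssim 2^{k(1-2/r)}\|h\|_{L^2}$; your H\"older-on-support plus Hausdorff--Young step is the dual version of the same move. Both then reduce to an $L^1$ (or equivalently pointwise) bound on the kernel $Q$ via integration by parts in $\xi$ and $\eta$, using exactly the ingredients you cite: the decomposition of $m_{\mu\nu}\cdot\varphi_k\varphi_{[k_1-1,k_1+1]}\varphi_{[k_2-1,k_2+1]}$ into products $a_1(\xi)a_2(\xi-\eta)a_3(\eta)$, together with \eqref{la2} and \eqref{la4}.

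One small correction in your accounting: the factor $(1+2^{3\min(k_1,k_2)})$ is \emph{not} a low-frequency Riesz singularity --- when $\min(k_1,k_2)\le 0$ that factor is $\sim 1$ and the Riesz derivatives $|\partial_\eta^\alpha(\eta/|\eta|)|\sim 2^{-|\alpha|k_2}$ are harmlessly matched by the support size $2^{2k_2}$. The cubic loss appears at \emph{high} $\min(k_1,k_2)$: by \eqref{la4} one has $|\nabla_{\xi,\eta}(1/\Phi_{\mu\nu})|\lesssim |1/\Phi_{\mu\nu}|$, so derivatives of $1/\Phi_{\mu\nu}$ are bounded only at unit scale, not at scale $2^{-\min(k_1,k_2)}$. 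Hence (taking $k_2\le k_1$) the kernel decays in $y-z$ only like $(1+2^{\min(0,k_2)}|y-z|)^{-N}$, and its $L^1$ norm picks up $2^{2(k_2)_+}$ on top of the $(1+2^{k_2})$ coming from $|\Phi_{\mu\nu}|^{-1}$, which together give $1+2^{3\min(k_1,k_2)}$. The $(1+2^{2k})$ arises identically from the $\xi$-integration.
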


\begin{proof}[Proof of Lemma \ref{tech2}] We estimate the left-hand side of \eqref{km1} by
\begin{equation}\label{lo3}
\begin{split}
&\sup_{\|h\|_{L^2}=1}\Big|\int_{\mathbb{R}^2\times\mathbb{R}^2\times\mathbb{R}^2}h(x)e^{ix\cdot\xi}P_{k,k_1,k_2}^{\mu\nu}(\xi,\eta)\widehat{f}(\xi-\eta)\widehat{g}(\eta)\,d\eta d\xi dx\Big|\\
&\lesssim\sup_{\|h\|_{L^2}=1}\Big|\int_{\mathbb{R}^2\times\mathbb{R}^2\times\mathbb{R}^2}(P_{[k-1,k+1]}h)(x)f(y)g(z)Q^{\mu\nu}_{k,k_1,k_2}(x,y,z)\,dxdydz\Big|,
\end{split}
\end{equation}
where
\begin{equation}\label{lo4}
\begin{split}
&P_{k,k_1,k_2}^{\mu\nu}(\xi,\eta)=\varphi_k(\xi)\frac{m_{\mu\nu}(\xi,\eta)}{\Phi_{\mu\nu}(\xi,\eta)}\varphi_{[k_1-1,k_1+1]}(\xi-\eta)\varphi_{[k_2-1,k_2+1]}(\eta),\\
&Q^{\mu\nu}_{k,k_1,k_2}(x,y,z)=\int_{\mathbb{R}^2\times\mathbb{R}^2}e^{i(x-y)\cdot\xi}e^{i(y-z)\cdot\eta}P_{k,k_1,k_2}^{\mu\nu}(\xi,\eta)\,d\xi d\eta.
\end{split}
\end{equation}
We assume that $k_2\leq k_1$ (the case $k_1\leq k_2$ is similar), thus $\max(k,k_2)\leq k_1+4$. Recall that $\Phi_{\mu\nu}(\xi,\eta)=\Lambda(\xi)-\mu\Lambda(\xi-\eta)-\nu\Lambda(\eta)$, and the inequalities, see Lemma \ref{tech0},
\begin{equation*}
|\Phi_{\mu\nu}(\xi,\eta)|^{-1}\lesssim 1+\min(|\xi|,|\eta|,|\xi-\eta|),\quad \big|D_\xi^mD_\eta^n\Phi_{\mu\nu}(\xi,\eta)\big|\lesssim \min(1,|\Phi_{\mu\nu}(\xi,\eta)|),\quad |m|+|n|\in[1,10].
\end{equation*}
The functions $(\xi,\eta)\to m_{\mu\nu}(\xi,\eta)\varphi_k(\xi)\varphi_{[k_1-1,k_1+1]}(\xi-\eta)\varphi_{[k_2-1,k_2+1]}(\eta)$ are sums of symbols of the form
\begin{equation*}
\begin{split}
&(2^{k}+2^{k_1}+2^{k_2})a_1(\xi)a_2(\xi-\eta)a_3(\eta),\\
&\sup_{|m|\in[0,10]}\sup_{v\in\mathbb{R}^2}\big[2^{k|m|}|\partial^m a_1(v)|+2^{k_1|m|}|\partial^m a_2(v)|+2^{k_2|m|}|\partial^m a_3(v)|\big]\lesssim 1,
\end{split}
\end{equation*}
see \eqref{ms1}--\eqref{ms3}. Integrating by parts $\xi$ and $\eta$ in \eqref{lo4} it follows that
\begin{equation*}
\big|Q^{\mu\nu}_{k,k_1,k_2}(x,y,z)\big|\lesssim (1+2^{k_2})(2^k+2^{k_1}+2^{k_2})\cdot [2^{2k_2}(1+2^{\min(0,k_2)}|y-z|)^{-4}]\cdot [2^{2k}(1+2^{\min(0,k)}|x-y|)^{-4}].
\end{equation*}
The desired bound \eqref{km1} follows from \eqref{lo3} and the estimate
\begin{equation*}
\|P_{[k-1,k+1]}h\|_{L^r}\lesssim 2^{k(1-2/r)}\|h\|_{L^2},\qquad r\in[2,\infty].
\end{equation*}
\end{proof}

The last two lemmas, which concern trilinear estimates, are needed only in the proof of Lemma \ref{ble2}.

\begin{lemma}\label{tech3}
Assume $k,k_1,k_2,k_3\in\mathbb{Z}$ and $l\leq\min(k,k_1,k_2,k_3)-1$. Then
\begin{equation}\label{TriBoundsErr}
\begin{split}
\Big\|\int_{\mathbb{R}^2\times\mathbb{R}^2}&P_{k,k_1,k_2,k_3}^{\mu\nu\sigma}(\xi,\eta,\chi)\widehat{f_1}(\xi-\eta)\widehat{f_2}(\eta-\chi)\widehat{f_3}(\chi)\,d\eta d\chi\Big\|_{L^2_\xi}\lesssim (1+2^{3\widetilde{a}})(2^{2\widetilde{c}}+2^{4\widetilde{c}})\\
&\left[2^{2l}\Vert f_1\Vert_{L^2}\Vert f_2\Vert_{L^2}\Vert f_3\Vert_{L^2}+\left(\min(\widetilde{a},k)-l\right)2^{k(1-2/r)}\Vert f_1\Vert_{L^{p_1}}\Vert f_2\Vert_{L^{p_2}}\Vert f_3\Vert_{L^{p_3}}\right],
\end{split}
\end{equation}
where $P_{k,k_1,k_2,k_3}^{\mu\nu\sigma}$ are as in \eqref{lo31}, $\widetilde{a}=\mathrm{med}(k_1,k_2,k_3)$, $\widetilde{c}=\max(k_1,k_2,k_3)$,  and $p_1,p_2,p_3,r\in[2,\infty]$ satisfy
\begin{equation*}
\frac{1}{p_1}+\frac{1}{p_2}+\frac{1}{p_3}+\frac{1}{r}=1.
\end{equation*}
\end{lemma}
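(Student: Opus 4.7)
The plan is to extend the template of Lemma~\ref{tech2}, combining Plancherel in $\xi$ with physical-space integration by parts, and handling the $1/\Phi_{\mu'\nu'}$ factor in $m_{\mu\nu\sigma}$ through an extra low-frequency cutoff. First I would dualize the $L^2_\xi$-norm against a unit $L^2$ function $h$, invert the Fourier transform in $\xi,\eta,\chi$, and reduce to
\[
\sup_{\|h\|_{L^2}=1}\Big|\int (P_{[k-1,k+1]}h)(x)\overline{f_1(y_1)f_2(y_2)f_3(y_3)}\,K(x,y_1,y_2,y_3)\,dxdy_1dy_2dy_3\Big|,
\]
where $K$ is the Fourier inverse of $P_{k,k_1,k_2,k_3}^{\mu\nu\sigma}$. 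Changing variables to $\alpha=\xi-\eta$, $\beta=\eta-\chi$, $\gamma=\chi$ decouples the three input frequencies and converts the problem into a convolution kernel estimate, paralleling the computation of $Q_{k,k_1,k_2}^{\mu\nu}$ in Lemma~\ref{tech2}.

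Next I would analyze $m_{\mu\nu\sigma}$: it is a sum of at most eighteen terms of the form $m_{\mu_1\nu_1}m_{\mu_2\nu_2}/\Phi_{\mu_1\nu_1}$. Combining \eqref{ms1}--\eqref{ms3} with Lemma~\ref{tech0}, one verifies that on the frequency support (which forces $k\leq\widetilde c+O(1)$ or the support is empty), the symbol and its first ten derivatives satisfy
\[
\sup_{0\leq|a|\leq 10}2^{|a|\cdot\min(k,k_1,k_2,k_3)}|\partial^a P_{k,k_1,k_2,k_3}^{\mu\nu\sigma}|\lesssim (1+2^{3\widetilde a})(2^{2\widetilde c}+2^{4\widetilde c}),
\]
uniformly away from the locus where the intermediate argument of $\Phi_{\mu_1\nu_1}$ drops to scale $\lesssim 2^l$. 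The factor $2^{2\widetilde c}+2^{4\widetilde c}$ tracks the product of two $m_{\mu'\nu'}$ multipliers together with an output-frequency contribution, while $1+2^{3\widetilde a}$ absorbs the bound on $1/\Phi_{\mu_1\nu_1}$ via \eqref{la2} together with its derivatives via \eqref{la4}. I would then fix a cutoff $\varphi_{\leq l}$ on that intermediate variable and split $P_{k,k_1,k_2,k_3}^{\mu\nu\sigma}=P^{\mathrm{low}}+P^{\mathrm{high}}$.

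For the smooth piece $P^{\mathrm{high}}$, four integrations by parts in each of $\alpha,\beta,\gamma$ produce a kernel bound
\[
|K^{\mathrm{high}}(x,y_1,y_2,y_3)|\lesssim (1+2^{3\widetilde a})(2^{2\widetilde c}+2^{4\widetilde c})\prod_{i=1}^{3}2^{2k_i}(1+2^{k_i}|x-y_i|)^{-4},
\]
after which Young's inequality in the $y_i$'s, together with the Bernstein-type bound $\|P_{[k-1,k+1]}h\|_{L^r}\lesssim 2^{k(1-2/r)}\|h\|_{L^2}$ from the proof of Lemma~\ref{tech2}, delivers the second contribution in \eqref{TriBoundsErr}; the logarithmic factor $\min(\widetilde a,k)-l$ arises from summing over the dyadic scales of the intermediate variable between $2^l$ and $2^{\min(\widetilde a,k)}$. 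For the rough piece $P^{\mathrm{low}}$, the intermediate variable is constrained to a ball of two-dimensional volume $\lesssim 2^{2l}$; bounding $|P^{\mathrm{low}}|\lesssim (1+2^{3\widetilde a})(2^{2\widetilde c}+2^{4\widetilde c})$ and applying Cauchy--Schwarz on the Fourier side, together with Plancherel in $\xi,\eta,\chi$ to feed the three $\|f_i\|_{L^2}$, produces the first contribution. The main obstacle will be to locate, for each of the eighteen summands of \eqref{mult1}--\eqref{mult4}, the correct intermediate variable at which $\Phi_{\mu_1\nu_1}$ can degenerate, and to check that on the complement the derivative bounds on the full $m_{\mu\nu\sigma}$ (including the $1/\Phi_{\mu_1\nu_1}$ factor) indeed survive uniformly, so that the integration-by-parts argument goes through with the clean constants above.
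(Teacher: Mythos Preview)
Your overall strategy --- split off the region where the intermediate variable in $m^0=\dfrac{m_{\mu'\nu'}}{\Phi_{\mu'\nu'}}$ is below scale $2^l$, bound that piece by crude $L^2$ volume, and sum dyadically over the complementary scales --- is exactly the paper's. The gap is in your treatment of $P^{\mathrm{high}}$: the product kernel bound
\[
|K^{\mathrm{high}}(x,y_1,y_2,y_3)|\lesssim (1+2^{3\widetilde a})(2^{2\widetilde c}+2^{4\widetilde c})\prod_{i=1}^{3}2^{2k_i}(1+2^{k_i}|x-y_i|)^{-4}
\]
does not follow from the derivative bound you write, nor from any bound that actually holds. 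In the input variables $\alpha=\xi-\eta$, $\beta=\eta-\chi$, $\gamma=\chi$ the symbol does \emph{not} tensorize: each summand of $m_{\mu\nu\sigma}$ carries Riesz-type factors in the intermediate combination (e.g.\ $(\alpha+\beta)/|\alpha+\beta|$ or $(\beta+\gamma)/|\beta+\gamma|$) and in $\xi=\alpha+\beta+\gamma$, as well as the factor $1/\Phi_{\mu'\nu'}$ which depends on that same intermediate combination. Once you insert $\varphi_n$ on the intermediate variable, a single $\partial_\alpha$ can hit $\varphi_n$ or these Riesz/$\Phi^{-1}$ factors and cost $2^{-n}$ (or $2^{-k}$ from $\varphi_k(\alpha+\beta+\gamma)$), not $2^{-k_1}$. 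Your own stated bound $|\partial^a P|\lesssim 2^{-|a|\min(k,k_1,k_2,k_3)}$ is honest, but it only yields a kernel with $L^1_{y_1,y_2,y_3}$ norm of size $2^{2(k_1+k_2+k_3)-6\min(k,k_1,k_2,k_3,n)}$, which blows up in the high--high--high to low regime.

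The paper avoids this by never attempting a trilinear product kernel. It observes that every summand of $m_{\mu\nu\sigma}$ has the \emph{nested} form $m^0(\xi,v)\,m^1(v,\xi_3)$ after relabeling, with $v$ the intermediate variable; then it writes the operator as an outer bilinear form in $(f_1,P_nG)$ with $G$ the inner bilinear form in $(f_2,f_3)$, and for each $n\ge l$ applies the already-proved Lemma~\ref{tech2} directly, estimating $\|P_nG\|_{L^{p_2p_3/(p_2+p_3)}}$ by the obvious $m^1$ bound. This respects the tree structure of the symbol (kernels in $(x-y_1,\ y_1-y_2,\ y_2-y_3)$ rather than $(x-y_i)_{i=1}^3$) and is why Lemma~\ref{tech2} suffices without a new trilinear kernel estimate. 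If you reorganize your $P^{\mathrm{high}}$ step this way --- i.e.\ at each dyadic scale $n$ invoke Lemma~\ref{tech2} on the outer pairing rather than integrate by parts in all three input variables --- your sketch becomes the paper's proof.
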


\begin{proof}[Proof of Lemma \ref{tech3}] We make linear changes of variables and relabel the functions $f_1,f_2,f_3$. In view of the formulas \eqref{mult1}--\eqref{mult4} we consider operators of the form
\begin{equation*}
T[f_1,f_2,f_3](\xi)=\varphi_k(\xi)\int_{\mathbb{R}^2\times\mathbb{R}^2}m^0(\xi,\xi_2+\xi_3)m^1(\xi_2+\xi_3,\xi_3)\widehat{P_{k_1}f_1}(\xi-\xi_2-\xi_3)\widehat{P_{k_2}f_2}(\xi_2)\widehat{P_{k_3}f_3}(\xi_3)\,d\xi_2 d\xi_3
\end{equation*}
where $m^0(x,y)=\frac{m_{\mu\nu}(x,y)}{\Phi_{\mu\nu}(x,y)}$ or $m^0(x,y)=\frac{m_{\mu\nu}(x,x-y)}{\Phi_{\mu\nu}(x,x-y)}$ and 
$m^1=m_{\mu\nu}$ or $m^1=m'_{\mu\nu}$, for suitable $\mu,\nu\in\{+,-\}$. It suffices to prove that
\begin{equation}\label{TriBounds2}
\begin{split}
\|T[f_1,f_2,f_3]\|_{L^2}& \lesssim (1+2^{3\widetilde{a}})(2^{2\widetilde{c}}+2^{4\widetilde{c}})\\
&\left[2^{2l}\Vert f_1\Vert_{L^2}\Vert f_2\Vert_{L^2}\Vert f_3\Vert_{L^2}+\left(\min(\widetilde{a},k)-l\right)2^{k(1-2/r)}\Vert f_1\Vert_{L^{p_1}}\Vert f_2\Vert_{L^{p_2}}\Vert f_3\Vert_{L^{p_3}}\right].
\end{split}
\end{equation}

We can further decompose
\begin{equation}\label{SumOp}
\begin{split}
&T=\sum_{n\in\mathbb{Z}}T_n,\\
&T_n[f_1,f_2,f_3](\xi)
=\varphi_k(\xi)\int_{\mathbb{R}^2}m^0(\xi,v)\varphi_n(v)\widehat{P_{k_1}f_1}(\xi-v)\widehat{G}(v)\,dv,\\
&\widehat{G}(v)=\int_{\mathbb{R}^2}m^1(v,\xi_3)\widehat{P_{k_2}f_2}(v-\xi_3)\widehat{P_{k_3}f_3}(\xi_3)\,d\xi_3.
\end{split}
\end{equation}
Since $\|P_nG\|_{L^1}\lesssim (2^{k_2}+2^{k_3})\|f_2\|_{L^2}\|f_3\|_{L^2}$, we estimate
\begin{equation*}
\begin{split}
\sum_{n\le l}\Vert T_{n}[f_1,f_2,f_3]\Vert_{L^2}&\lesssim \sum_{n\le l}(2^k+2^{k_1})(1+2^n)\Vert f_1\Vert_{L^2}\|\widehat{P_nG}\|_{L^1}\\
&\lesssim (2^{2l}+2^{3l})(2^k+2^{k_1})(2^{k_2}+2^{k_3})\|f_1\|_{L^2}\|f_2\|_{L^2}\|f_3\|_{L^2}.
\end{split}
\end{equation*}
Since $\|P_nG\|_{L^{p_2p_3/(p_2+p_3)}}\lesssim (2^{k_2}+2^{k_3})\|f_2\|_{L^{p_2}}\|f_3\|_{L^{p_3}}$, we estimate using Lemma \ref{tech2},
\begin{equation}\label{nh85}
\begin{split}
\Vert T_{n}&[f_1,f_2,f_3]\Vert_{L^2}\lesssim 2^{k(1-2/r)}(1+2^{2k})(1+2^{3\min(k_1,n)})(2^{k}+2^{k_1})\Vert f_1\Vert_{L^{p_1}}\Vert G\Vert_{L^{p_2p_3/(p_2+p_3)}_x}\\
&\lesssim 2^{k(1-2/r)}(1+2^{2k})(1+2^{3\min(k_1,n)})(2^{k}+2^{k_1})(2^{k_2}+2^{k_3})\Vert f_1\Vert_{L^{p_1}}\Vert f_2\Vert_{L^{p_2}}\Vert f_3\Vert_{L^{p_3}}.
\end{split}
\end{equation}
It is easy to see that there are at most $C+|\min(\widetilde{a},k)-l|$ values of $n$ in $[l,\infty)\cap\mathbb{Z}$ for which $T_{n}[f_1,f_2,f_3]$ is nontrivial, and for all such values 
$2^{\min(k_1,n)}\lesssim 2^{\mathrm{med}(k_1,k_2,k_3)}$. The desired bound \eqref{TriBounds2} follows.
\end{proof}

\begin{lemma}\label{KerLem}
Assume $m\in\mathbb{Z}_+$, $k\in\mathbb{Z}$, $(\mu,\nu,\sigma)\in \{(+,+,+),(+,+,-),(+,-,-),(-,-,-)\}$, $s\in [2^{m-1},2^{m+1}]$. Then, for any $f_1,f_2,f_3\in Y^{N_0}$,
\begin{equation}\label{nh29.5}
\begin{split}
&\sum_{j\ge M'(k,m)}2^j\Big\| \varphi_j\cdot P_k\mathcal{F}^{-1}\int_{\mathbb{R}^2\times\mathbb{R}^2}e^{is\Phi_{\mu\nu\sigma}(\xi,\eta,\chi)}m_{\mu\nu\sigma}(\xi,\eta,\chi)\widehat{Q_jf_1}(\xi-\eta)\widehat{ Q_jf_2}(\eta-\chi)\widehat{Q_jf_3}(\chi)d\eta d\chi\Big\|_{L^2_x}\\
&\lesssim 2^{-51m/50}2^{-10k_+}\Vert f_1\Vert_{Y^{N_0}}\Vert f_2\Vert_{Y^{N_0}}\Vert f_3\Vert_{Y^{N_0}},
\end{split}
\end{equation}
where, as in the proof of Lemma \ref{ble2}, $Q_j=P_{[-2j/3,j/10]}$, $\Phi_{\mu\nu\sigma}(\xi,\eta,\chi)=\Lambda(\xi)-\mu\Lambda(\xi-\eta)-\nu\Lambda(\eta-\chi)-\sigma\Lambda(\chi)$,
and for some sufficiently large constant $D$,
\begin{equation*}
M'(k,m):=D+\max(m,-21k/20,10k).
\end{equation*}
In addition, with $M(k,m)$ as in \eqref{nh3}, for any $f_1,f_2,f_3\in Y^{N_0}$
\begin{equation}\label{nh30}
\begin{split}
&\sum_{j\ge M(k,m)}2^j\Big\| \varphi_j\cdot P_k\mathcal{F}^{-1}\int_{\mathbb{R}^2\times\mathbb{R}^2}e^{is\Phi_{++-}(\xi,\eta,\chi)}m_{++-}(\xi,\eta,\chi)\widehat{Q_jf_1}(\xi-\eta)\widehat{ Q_jf_2}(\eta-\chi)\widehat{Q_jf_3}(\chi)d\eta d\chi\Big\|_{L^2_x}\\
&\lesssim 2^{-1001m/1000}2^{-10k_+}\Vert f_1\Vert_{Y^{N_0}}\Vert f_2\Vert_{Y^{N_0}}\Vert f_3\Vert_{Y^{N_0}}.
\end{split}
\end{equation}
\end{lemma}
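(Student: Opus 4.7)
My plan closely follows the space--frequency strategy already used in the proof of Proposition \ref{LocExLemma}(iii) and of Lemma \ref{ble1}: pass to physical space, decompose each input by spatial localizations $\varphi_{j_i}$ and frequency localizations $P_{k_i}$, and then split the sum into a ``far-field'' region in which an integration by parts in the frequency variables renders the kernel negligible, and a ``near-diagonal'' region in which the $Z$-norm component of $\|\cdot\|_{Y^{N_0}}$ provides the sharp decay.

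Concretely, first I would rewrite the trilinear form as
\begin{equation*}
P_kT_{\mu\nu\sigma}[f_1,f_2,f_3](x)=\int_{(\mathbb{R}^2)^3}Q_jf_1(y_1)Q_jf_2(y_2)Q_jf_3(y_3)K^{\mu\nu\sigma}_{k,j,s}(x,y_1,y_2,y_3)\,dy_1dy_2dy_3,
\end{equation*}
where, after changing variables to $a=\xi-\eta$, $b=\eta-\chi$, $c=\chi$, the kernel $K^{\mu\nu\sigma}_{k,j,s}$ is an oscillatory integral in $(a,b,c)$ whose phase equals $(x-y_1)\cdot a+(y_1-y_2)\cdot b+(y_2-y_3)\cdot c+s\Phi_{\mu\nu\sigma}(a+b+c,b+c,c)$ and whose symbol carries the cutoffs from $\varphi_k$, $Q_j$, and $m_{\mu\nu\sigma}$. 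Because \eqref{la1} gives $|\partial^\alpha\Phi_{\mu\nu\sigma}|\lesssim 1$ for $|\alpha|\geq 1$, repeated integration by parts in $a,b,c$ against the factors $e^{i(x-y_i)\cdot\bullet}$ yields the bound
\begin{equation*}
|K^{\mu\nu\sigma}_{k,j,s}(x,y_1,y_2,y_3)|\lesssim_{N,D}2^{Ck_+}\,2^{-Nj},\qquad\text{ whenever }|x|\sim 2^j,\ \max_i|x-y_i|\geq 2^{j-10},\ s\leq 2^j/D,
\end{equation*}
for all orders $N$, provided $j\geq M'(k,m)$ (so that $2^j\gtrsim s$).

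Using this kernel estimate, the proof of \eqref{nh29.5} then proceeds by decomposing $Q_jf_i=\sum_{j_i\geq 0,k_i\in[-2j/3,j/10]}\varphi_{j_i}\cdot P_{k_i}f_i$ and splitting the triple sum over $(j_1,j_2,j_3)$ into the region $\max(j_1,j_2,j_3)\leq j-10$ (handled by the kernel bound, with trivial $L^2$-estimates on the spatial pieces), and the complementary region where some $j_i\geq j-10$. In the latter, Cauchy--Schwarz on the remaining two spatial pieces together with the defining estimate $\|\varphi_{j_i}P_{k_i}f\|_{L^2}\lesssim 2^{-j_i}(2^{k_i/10}+2^{10k_i})^{-1}\|f\|_Z$ for the extremal piece, combined with Lemma \ref{tech3} to bound the associated bilinear kernel in $(y_2,y_3)$ for the two remaining inputs in $L^2\times L^\infty$ or $L^\infty\times L^2$, produces a gain $2^{-j}$ that, summed against $2^j$ in the outer weight and using $2^j\gtrsim 2^m$, yields the decay $2^{-51m/50}$ with the required weight $2^{-10k_+}$.

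For \eqref{nh30} the additional range $M(k,m)\leq j<M'(k,m)$, which corresponds to $2^{7m/10}\lesssim 2^j\lesssim 2^m$, must be treated by a separate argument since now $s\gg 2^j$ and the non-stationary integration by parts against $e^{i(x-y_i)\cdot\bullet}$ fails. In this regime I would split the region of integration in $(a,b,c)$ according to whether $|\nabla_{a,b,c}\Phi_{++-}|$ is large (in which case one gains a factor $s^{-1}\lesssim 2^{-10m/7}$ per integration by parts, more than enough to absorb the loss from the weak kernel bound), or small, in which case the frequencies lie in a very thin set and the elementary trilinear estimate of Lemma \ref{tech3} applied with $r=2$, $l=-100j$, and a judicious assignment of $(p_1,p_2,p_3)$ putting the extremal spatial piece in $L^2$ (via the $Z$-norm bound) and the other two in $L^\infty$ (via the dispersive estimate \eqref{keydisperse2}, which at time $s\sim 2^m$ gives $\|P_{k_i}U_\pm(s)\|_{L^\infty}\lesssim \varepsilon_0 2^{-m}$-type decay) yields the slightly weaker decay $2^{-1001m/1000}$ that is still compatible with the claimed bound.

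The main obstacle is the book-keeping across the three-parameter spatial and three-parameter frequency decompositions while keeping track of the precise $2^{-10k_+}$ factor uniformly as $k$ ranges over all of $\mathbb{Z}$. In particular, one must verify that the regions where $|\nabla\Phi_{++-}|$ degenerates are always controlled either by the frequency restriction $Q_j$ (which excludes very low and very high frequencies) or by the low-frequency factors present in $m_{++-}$ and in the bounds of Lemma \ref{tech0}, so that the sub-exponential loss in $m$ does not cascade through the three-fold sum.
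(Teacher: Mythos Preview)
Your outline for \eqref{nh29.5} is close in spirit to the paper's argument, but it glosses over one point that the paper handles explicitly: the symbol $m_{\mu\nu\sigma}$ contains factors $1/\Phi_{\mu'\nu'}(\xi,\eta)$ (see \eqref{mult1}--\eqref{mult4}), and the intermediate variable $\eta$ (or $v=\xi_2+\xi_3$ after the paper's change of variables) is not frequency--localized by $Q_j$. Thus the symbol is not uniformly smooth in all frequency variables, and your claimed kernel bound from integration by parts in $a,b,c$ simultaneously is not justified as stated. The paper cures this by inserting a further dyadic decomposition $\varphi_n(v)$ in the intermediate variable: for $n\le -4j/5$ one estimates directly in $L^2$ on the Fourier side, while for $n\ge -4j/5$ the symbol is now smooth enough (derivatives $\lesssim 2^{4j/5}\ll 2^j$) to integrate by parts in all variables and localize all three inputs near $|y_i|\approx 2^j$. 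With this additional decomposition your scheme goes through and matches the paper's Step~1.

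For \eqref{nh30} there is a more substantial gap. In the range $M(k,m)\le j<M'(k,m)$ one has $s\approx 2^j$, so the linear terms $(x-y_i)\cdot(\text{frequency})$ no longer automatically dominate the phase. Your proposal to split according to the size of $|\nabla\Phi_{++-}|$ and handle the ``small gradient'' region by a volume argument via Lemma~\ref{tech3} does not work: Lemma~\ref{tech3} gives no gain from frequency restriction to thin sets, and a direct $L^2\times L^\infty\times L^\infty$ estimate with dispersive decay only controls the \emph{tails} (the pieces of $f_i$ supported at $|y_i|\gtrsim 2^{j/2}$, say), leaving a main term with all inputs localized near the origin. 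The paper's key observation---which your proposal does not identify---is that for this main term the \emph{full} phase $\Psi$ (including the linear parts $(x-y_i)\cdot\bullet$) satisfies $|\nabla_{\xi,v,\xi_3}\Psi|\gtrsim 2^{9j/10}$ throughout; see \eqref{nh77}. This is a structural fact specific to the $(++-)$ combination (which, after relabeling variables as in \eqref{CanForm0}, splits into the two sub-phases $\tilde\Phi_{++-}$ and $\tilde\Phi_{-++}$, treated separately). It relies on the algebraic identities $\nabla_\xi\tilde\Phi_{++-}=\nabla\Lambda(\xi)-\nabla\Lambda(\xi-v)$, $\nabla_{\xi_3}\tilde\Phi_{++-}=\nabla\Lambda(v-\xi_3)-\nabla\Lambda(\xi_3)$, etc., together with the two-sided bound \eqref{la3} on $|\nabla\Lambda(x)-\nabla\Lambda(y)|$, and a short case analysis. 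Once this is established, a uniform nonstationary-phase argument kills the main term. Without this structural input the intermediate range cannot be closed.
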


\begin{proof}[Proof of Lemma \ref{KerLem}] We may assume that $\Vert f_1\Vert_{Y^{N_0}}=\Vert f_2\Vert_{Y^{N_0}}=\Vert f_3\Vert_{Y^{N_0}}=1$ and decompose the multipliers  $m_{\mu\nu\sigma}(\xi,\eta,\chi)$ into multipliers of the form $m^0(\xi,\xi_2+\xi_3)m^1(\xi_2+\xi_3,\xi_3)$ where, for some $(\mu,\nu),(\mu',\nu')\in \{(+,+),(+,-),(-,-)\}$,
\begin{equation}\label{CanForm1}
\begin{split}
&m^1(x,y)=m_{\mu\nu}(x,y)\text{ or }m^1(x,y)=m_{\mu\nu}(x,x-y)\text{ or }m^1(x,y)=m^\prime_{\mu\nu}(x,y)\text{ or }m^1(x,y)=m'_{\mu\nu}(x,x-y),\\
&m^0(x,y)=\frac{m_{\mu'\nu'}(x,y)}{\Phi_{\mu'\nu'}(x,y)}\quad\text{ or }\quad m^0(x,y)=\frac{m_{\mu'\nu'}(x,x-y)}{\Phi_{\mu'\nu'}(x,x-y)}.
\end{split}
\end{equation}
After relabelling the variables, as in the proof of Lemma \ref{tech3}, we consider the functions $G^{\mu\nu\sigma}_{j,k_1,k_2,k_3}$, $\mu,\nu,\sigma\in\{+,-\}$, defined by
\begin{equation}\label{CanForm0}
\begin{split}
\widehat{G^{\mu\nu\sigma}_{j,k_1,k_2,k_3}}(\xi):=\varphi_k(\xi)\int_{\mathbb{R}^2\times\mathbb{R}^2}e^{is\tilde{\Phi}_{\mu\nu\sigma}(\xi,\xi_2,\xi_3)}m^0(\xi,\xi_2+\xi_3)m^1(\xi_2+\xi_3,\xi_3)\\
\widehat{P_{k_1}f_1}(\xi-\xi_2-\xi_3)\widehat{P_{k_2} f_2}(\xi_2)\widehat{P_{k_3} f_3}(\xi_3)\,d\xi_2 d\xi_3,
\end{split}
\end{equation}
where $j\geq M(k,m)$, $k_1,k_2,k_3\in[-2j/3,j/10]\cap\mathbb{Z}$, $m^0,m^1$ are as in \eqref{CanForm1}, and
\begin{equation}\label{CanForm}
\tilde{\Phi}_{\mu\nu\sigma}(\xi,\xi_2,\xi_3):=\Lambda(\xi)-\mu\Lambda(\xi-\xi_2-\xi_3)-\nu\Lambda(\xi_2)-\sigma\Lambda(\xi_3).
\end{equation}

{\bf{Step 1.}} We estimate now $2^j\|\varphi_j(x)\cdot G^{\mu\nu\sigma}_{j,k_1,k_2,k_3}(x)\|_{L^2}$ for fixed $k_1,k_2,k_3,j$. For this we write, after changes of variables
\begin{equation*}
\widehat{G^{\mu\nu\sigma}_{j,k_1,k_2,k_3}}(\xi)=\varphi_k(\xi)\int_{\mathbb{R}^2\times\mathbb{R}^2}e^{is\tilde{\Phi}_{\mu\nu\sigma}(\xi,v-\xi_3,\xi_3)}m^0(\xi,v)m^1(v,\xi_3)\widehat{P_{k_1}f_1}(\xi-v)\widehat{P_{k_2} f_2}(v-\xi_3)\widehat{P_{k_3} f_3}(\xi_3)\,dv d\xi_3.
\end{equation*}
We decompose
\begin{equation*}
G^{\mu\nu\sigma}_{j,k_1,k_2,k_3}=\sum_{n\in\mathbb{Z}}G^{\mu\nu\sigma}_{n,j,k_1,k_2,k_3}
\end{equation*}
where
\begin{equation*}
\begin{split}
\widehat{G^{\mu\nu\sigma}_{n,j,k_1,k_2,k_3}}(\xi)=&\varphi_k(\xi)\int_{\mathbb{R}^2\times\mathbb{R}^2}e^{is\tilde{\Phi}_{\mu\nu\sigma}(\xi,v-\xi_3,\xi_3)}\\
&\times m^0(\xi,v)m^1(v,\xi_3)\varphi_n(v)\cdot\widehat{P_{k_1}f_1}(\xi-v)\widehat{P_{k_2} f_2}(v-\xi_3)\widehat{P_{k_3} f_3}(\xi_3)\,dv d\xi_3.
\end{split}
\end{equation*}

Let
\begin{equation}\label{nh47.5}
Q(\xi,v,\xi_3):=\varphi_k(\xi)\varphi_{[k_1-1,k_1+1]}(\xi-v)\varphi_{[k_2-1,k_2+1]}(v-\xi_3)\varphi_{[k_3-1,k_3+1]}(\xi_3)\cdot m^0(\xi,v)m^1(v,\xi_3).
\end{equation}
For simplicity of notation we drop the subscripts $j,k_1,k_2,k_3$ and rewrite
\begin{equation}\label{nh48}
G^{\mu\nu\sigma}_{j,k_1,k_2,k_3}=\sum_{n\in\mathbb{Z}}T^{\mu\nu\sigma}_{n}[P_{k_1}f_1,P_{k_2}f_2,P_{k_3}f_3],
\end{equation}
where, for any $g_1,g_2,g_3\in L^2(\mathbb{R}^2)$,
\begin{equation}\label{nh49}
\mathcal{F}\big[T^{\mu\nu\sigma}_{n}[g_1,g_2,g_3]\big](\xi):=\int_{\mathbb{R}^2\times\mathbb{R}^2}e^{is\tilde{\Phi}_{\mu\nu\sigma}(\xi,v-\xi_3,\xi_3)}\varphi_n(v)Q(\xi,v,\xi_3)\widehat{g_1}(\xi-v)\widehat{g_2}(v-\xi_3)\widehat{g_3}(\xi_3)\,dv d\xi_3.
\end{equation}
We will also need the representation of the operators $T^{\mu\nu\sigma}_{n}[g_1,g_2,g_3]$ in the physical space,
\begin{equation}\label{nh50}
T^{\mu\nu\sigma}_{n}[g_1,g_2,g_3](x)=\int_{\mathbb{R}^2\times\mathbb{R}^2\times\mathbb{R}^2}g_1(y_1)g_2(y_2)g_3(y_3)\cdot R^{\mu\nu\sigma}_{n}(x;y_1,y_2,y_3)\,dy_1dy_2dy_3,
\end{equation}
where
\begin{equation}\label{nh40}
\begin{split}
R^{\mu\nu\sigma}_{n}(x;y_1,y_2,y_3):=c\int_{\mathbb{R}^2\times\mathbb{R}^2\times\mathbb{R}^2}&e^{i(x-y_1)\cdot\xi}e^{i(y_1-y_2)\cdot v}e^{i(y_2-y_3)\cdot \xi_3}\\
&\times e^{is[\Lambda(\xi)-\mu\Lambda(\xi-v)-\nu\Lambda(v-\xi_3)-\sigma\Lambda(\xi_3)]}\varphi_n(v)Q(\xi,v,\xi_3)\,d\xi dv d\xi_3.
\end{split}
\end{equation}

Recall that $j\geq m+D$ for $D$ sufficiently large, and 
\begin{equation*}
\sup_{\xi\in\mathbb{R}^2}|D^\alpha\Lambda(\xi)|\lesssim _{|\alpha|}1.
\end{equation*}
We prove bounds on the kernel $R^{\mu\nu\sigma}_{n}$ using the formula \eqref{nh40} and integration by parts. More precisely, we use the general bound
\begin{equation}\label{nh41.5}
\begin{split}
&\Big|\int_{\mathbb{R}^d}e^{iKf}g\,dx\Big|\lesssim_N(K\rho)^{-N}|{\mathrm{supp}}\,g|,\qquad 0<\rho\leq 1/\rho\leq K,\qquad\text{ provided that}\qquad\\
&\|D^\alpha g\|_{L^\infty}\lesssim_{|\alpha|}\rho^{-|\alpha|},\,|\alpha|\geq 0,\qquad \|D^\alpha f \cdot\mathbf{1}_{{\mathrm{supp}}\,g}\|_{L^\infty}\lesssim_{|\alpha|}\rho^{1-|\alpha|},\,|\alpha|\geq 1,\qquad|\nabla f|\geq \mathbf{1}_{{\mathrm{supp}}\,g},
\end{split}
\end{equation}
which follows easily by smooth localization to balls of radius $\approx\rho$ and integration by parts. 

Recall that $|k|,|k_1|,|k_2|,|k_3|$ are all bounded by $99j/100$. Integrating by parts only in $\xi$, it follows that
\begin{equation}\label{nh40.5}
\text{ if }|x-y_1|\geq 2^{j-10}\text{ then }\big|R^{\mu\nu\sigma}_{n}(x;y_1,y_2,y_3)\big|\lesssim 2^{2n}(2^j+|x-y_1|)^{-100}.
\end{equation}
Therefore, letting
\begin{equation*}
f^1_{j,k_1}(y):=\varphi_{[j-4,j+4]}(y)\cdot P_{k_1}f_1(y),
\end{equation*}
and noticing that the operators $T^{\mu\nu\sigma}_{n}$ are nontrivial only if $n\leq \max(k_2,k_3)+4$, 
\begin{equation}\label{nh41}
\sum_{n\in\mathbb{Z}}2^j\big\|\varphi_j\cdot T^{\mu\nu\sigma}_{n}[P_{k_1}f_1-f^1_{j,k_1},P_{k_2}f_2,P_{k_3}f_3]\big\|_{L^2}\lesssim 2^{-50j}.
\end{equation}

To estimate the contributions of the functions $f^1_{j,k_1}$ we analyze two cases: $n\leq -4j/5$ and $n\geq -4j/5$. If $n\leq -4j/5$ then we rewrite in the frequency space, using the representation \eqref{nh49},
\begin{equation*}
\begin{split}
\mathcal{F}\big[T^{\mu\nu\sigma}_{n}[f^1_{j,k_1},P_{k_2}f_2,P_{k_3}f_3]\big](\xi)=\int_{\mathbb{R}^2\times\mathbb{R}^2}&e^{is\tilde{\Phi}_{\mu\nu\sigma}(\xi,v-\xi_3,\xi_3)}\\
&\times\varphi_n(v)Q(\xi,v,\xi_3)\widehat{f^1_{j,k_1}}(\xi-v)\widehat{P_{k_2} f_2}(v-\xi_3)\widehat{P_{k_3} f_3}(\xi_3)\,dv d\xi_3.
\end{split}
\end{equation*}
Since $|Q(\xi,v,\xi_3)|\lesssim 1+2^{3\max(k_1,k_2,k_3)}$, we estimate
\begin{equation*}
\big\|\mathcal{F}\big[T^{\mu\nu\sigma}_{n}[f^1_{j,k_1},P_{k_2}f_2,P_{k_3}f_3]\big]\big\|_{L^2}\lesssim (1+2^{3\max(k_1,k_2,k_3)})2^{2n}\|\widehat{f^1_{j,k_1}}\|_{L^2}\|\widehat{P_{k_2} f_2}\|_{L^2}\|\widehat{P_{k_3} f_3}\|_{L^2}.
\end{equation*}
Recalling that $j\geq m$ and $\max(k,k_1,k_2,k_3)\leq j/10+4$, it follows that
\begin{equation}\label{nh42}
\sum_{n\leq-4j/5}2^j\big\|T^{\mu\nu\sigma}_{n}[f^1_{j,k_1},P_{k_2}f_2,P_{k_3}f_3]\big\|_{L^2}\lesssim 2^{-11m/10}2^{-7k_+}2^{-2j/5}.
\end{equation}

To estimate the contributions corresponding to $n\geq -4j/5$ we reexamine the formula \eqref{nh40} and notice that we can integrate by parts in either $\xi$ or $v$ or $\xi_3$. As a result, using \eqref{nh41.5}, the bound \eqref{nh40.5} can be replaced with the stronger bound
\begin{equation*}
\text{ if }|x-y_1|+|x-y_2|+|x-y_3|\geq 2^{j-10}\text{ then }\big|R^{\mu\nu\sigma}_{n}(x;y_1,y_2,y_3)\big|\lesssim (|x-y_3|+|x-y_1|+|x-y_2|)^{-100}.
\end{equation*}
As a consequence, if
\begin{equation*}
f^2_{j,k_2}(y):=\varphi_{[j-4,j+4]}(y)\cdot P_{k_2}f_2(y),\qquad f^3_{j,k_3}(y):=\varphi_{[j-4,j+4]}(y)\cdot P_{k_3}f_3(y),
\end{equation*}
it follows that
\begin{equation}\label{nh44}
\sum_{n\geq-4j/5}2^j\Big[\big\|\varphi_j\cdot T^{\mu\nu\sigma}_{n}[f^1_{j,k_1},P_{k_2}f_2-f^2_{j,k_2},P_{k_3}f_3]\big\|_{L^2}+\big\|\varphi_j\cdot T^{\mu\nu\sigma}_{n}[f^1_{j,k_1},f^2_{j,k_2},P_{k_3}f_3-f^3_{j,k_3}]\big\|_{L^2}\Big]\lesssim 2^{-50j}.
\end{equation}

Finally, we estimate the contributions of $T^{\mu\nu\sigma}_{n}[f^1_{j,k_1},f^2_{j,k_2},f^3_{j,k_3}]$ in the Fourier space, using the formula \eqref{nh49}. Since $|Q(\xi,v,\xi_3)|\lesssim 1+2^{3\max(k_1,k_2,k_3)}$, we estimate as before
\begin{equation*}
\big\|\mathcal{F}\big[T^{\mu\nu\sigma}_{n}[f^1_{j,k_1},f^2_{j,k_2},f^3_{j,k_3}]\big]\big\|_{L^2}\lesssim (1+2^{3\max(k_1,k_2,k_3)})2^{2n}\|\widehat{f^1_{j,k_1}}\|_{L^2}\|\widehat{f^2_{j,k_2}}\|_{L^2}\|\widehat{f^3_{j,k_3}}\|_{L^2}.
\end{equation*}
Recalling that $j\geq m$ and that the operators $T^{\mu\nu\sigma}_{n}$ are nontrivial only if $n\leq \max(k_2,k_3)+4$,
\begin{equation}\label{nh45}
\sum_{-4j/5\leq n}2^j\big\|T^{\mu\nu\sigma}_{n}[f^1_{j,k_1},f^2_{j,k_2},f^3_{j,k_3}]\big\|_{L^2}\lesssim 2^{-11m/10}2^{-5k_+}2^{-3j/5}.
\end{equation}

It follows from the formula \eqref{nh48} and the bounds \eqref{nh41}, \eqref{nh42}, \eqref{nh44}, and \eqref{nh45} that
\begin{equation*}
2^j\big\|\varphi_j\cdot G^{\mu\nu\sigma}_{j,k_1,k_2,k_3}\big\|_{L^2}\lesssim 2^{-11m/10}2^{-7k_+}2^{-2j/5}
\end{equation*}
and the desired bound \eqref{nh29.5} follows by summing over $j,k_1,k_2,k_3$ in the appropriate ranges.

{\bf{Step 2.}} It remains to prove \eqref{nh30}. We may assume that $M(k,m)<M'(k,m)$, i.e.
\begin{equation}\label{nh60}
m\geq\max(-21k/20,10k),\qquad M'(k,m)=D+m.
\end{equation}
We define the functions $G^{\mu\nu\sigma}_{j,k_1,k_2,k_3}$ as in \eqref{CanForm0} and we have to consider the cases $(\mu,\nu,\sigma)=(+,+,-)$ and $(\mu,\nu,\sigma)=(-,+,+)$. We use first $L^2$ estimates to further restrict the ranges of the parameters: if $j\in[1,m+D]$ and $k_1,k_2,k_3\in[-2j/3,j/10]$ then we use Lemma \ref{tech3} with $r=2$, $l=-100j$ and appropriate choices of $p_1,p_2,p_3$ to derive the bounds
\begin{equation*}
\begin{split}
&\|G^{\mu\nu\sigma}_{j,k_1,k_2,k_3}\|_{L^2}\lesssim j2^{-2m}2^{\widetilde{b}/2}2^{-\widetilde{c}_+}\qquad\text{ if }k\leq 0,\\
&\|G^{\mu\nu\sigma}_{j,k_1,k_2,k_3}\|_{L^2}\lesssim j2^{-2m}\min(2^{-\widetilde{b}/5}2^{-15\widetilde{c}_+},2^{3\widetilde{b}/5})\qquad\text{ if }k\geq 0,
\end{split}
\end{equation*}
where $\widetilde{b}=\min(k_1,k_2,k_3)$ and $\widetilde{c}=\max(k_1,k_2,k_3)$. Therefore 
\begin{equation*}
\|G^{\mu\nu\sigma}_{j,k_1,k_2,k_3}\|_{L^2}\lesssim j2^{-2m}2^{-10k_+}\min(2^{\widetilde{b}/15},2^{-\widetilde{c}_+/2}),
\end{equation*}
so the contribution of the $L^2$ norms of the functions $G^{\mu\nu\sigma}_{j,k_1,k_2,k_3}$ when $j\leq 49m/50+2D$ or when $\widetilde{b}\leq -m/50$ or when $\widetilde{c}\geq m/50$ are bounded by $C2^{-10k_+}2^{-1001m/1000}$ as desired. It remains to prove that
\begin{equation}\label{nh65}
2^j\|\varphi_j(x)\cdot G^{\mu\nu\sigma}_{j,k_1,k_2,k_3}(x)\|_{L^2_x}\lesssim 2^{-101m/100}2^{-10k_+},
\end{equation}
where $(\mu,\nu,\sigma)\in\{(+,+,-),(-,+,+)\}$, for parameters $k,m,j,k_1,k_2,k_3$ fixed and satisfying
\begin{equation}\label{nh66}
k_1,k_2,k_3\in[-m/50,m/400],\qquad j\in[49m/50+2D,m+D],\qquad m\geq\max(-21k/20,10k).
\end{equation}

We use again the identities \eqref{nh47.5}--\eqref{nh40}. Using \eqref{nh47.5}, \eqref{nh49}, and the bound $|Q(\xi,v,\xi_3)|\lesssim 1+2^{3\max(k_1,k_2,k_3)}$, we estimate as before
\begin{equation}\label{nh67}
\big\|T^{\mu\nu\sigma}_{n}[g_1,g_2,g_3]\big\|_{L^2}\lesssim (1+2^{3\max(k_1,k_2,k_3)})2^{2n}\|g_1\|_{L^2}\|g_2\|_{L^2}\|g_3\|_{L^2},
\end{equation}
for any $g_1,g_2,g_3\in L^2(\mathbb{R}^2)$ and any $n\in\mathbb{Z}$. For \eqref{nh65} it suffices to prove that
\begin{equation}\label{nh68}
\sum_{n\in\mathbb{Z}}2^j\|\varphi_j\cdot T^{\mu\nu\sigma}_{n}[P_{k_1}f_1,P_{k_2}f_2,P_{k_3}f_3]\big\|_{L^2}\lesssim 2^{-11m/10},
\end{equation}
where $k,m,j,k_1,k_2,k_3$ satisfy \eqref{nh66}, $(\mu,\nu,\sigma)\in\{(+,+,-),(-,+,+)\}$, and $\|f_\alpha\|_{Y^{N_0}}\leq 1$, $\al=1,2,3$. As before, let
\begin{equation*}
\widetilde{c}=\max(k_1,k_2,k_3),\qquad \widetilde{a}=\mathrm{med}(k_1,k_2,k_3),\qquad \widetilde{b}=\min(k_1,k_2,k_3).
\end{equation*}

{\bf{Step 3. }} We estimate first the contribution to the left-hand side of \eqref{nh68} coming from $n\leq -4j/5$. Notice that this contribution is trivial unless $|k-k_1|\leq 4$ and $|k_2-k_3|\leq 4$. For any $\widetilde{j}\geq 1$ let
\begin{equation*}
f^\al_{\leq\widetilde{j},k_\al}(y):=\varphi_{(-\infty,\widetilde{j}]}(y)\cdot P_{k_\al}f_{\al}(y),\qquad \al=1,2,3.
\end{equation*}
Since $\|f_\al\|_{Y^{N_0}}\leq 1$, $\alpha\in\{1,2,3\}$, for any $\widetilde{j}\geq 1$ we have
\begin{equation}\label{nh69}
\begin{split}
&\|P_{k_\al}f_{\al}-f^\al_{\leq\widetilde{j},k_\al}\|_{L^2}\lesssim 2^{-\widetilde{j}}(2^{k_\al/10}+2^{10k_\alpha})^{-1},\\
&\|P_{k_\al}f_{\al}-f^\al_{\leq\widetilde{j},k_\al}\|_{L^2}+\|f^\al_{\leq\widetilde{j},k_\al}\|_{L^2}\lesssim \min(2^{9k_\alpha/10},2^{-N_0k_\alpha}),\\
&\|P_{k_\al}f_{\al}-f^\al_{\leq\widetilde{j},k_\al}\|_{L^1}+\|f^\al_{\leq\widetilde{j},k_\al}\|_{L^1}\lesssim \min(2^{-k_\alpha/10},2^{-10k_\alpha}).
\end{split}
\end{equation}
Using \eqref{nh66}, \eqref{nh67}, and \eqref{nh69} it follows that
\begin{equation*}
\big\|T^{\mu\nu\sigma}_{n}[P_{k_1}f_1,P_{k_2}f_2,P_{k_3}f_3]-T^{\mu\nu\sigma}_{n}[f^1_{\leq4j/5,k_1},f^2_{\leq4j/5,k_2},f^3_{\leq4j/5,k_3}]\big\|_{L^2}\lesssim 2^{m/50}2^{2n}2^{-4j/5}.
\end{equation*}
therefore, using \eqref{nh66},
\begin{equation}\label{nh70}
\sum_{n\leq-4j/5}2^j\big\|T^{\mu\nu\sigma}_{n}[P_{k_1}f_1,P_{k_2}f_2,P_{k_3}f_3]-T^{\mu\nu\sigma}_{n}[f^1_{\leq4j/5,k_1},f^2_{\leq4j/5,k_2},f^3_{\leq4j/5,k_3}]\big\|_{L^2}\lesssim 2^{-11m/10}.
\end{equation}

To estimate the contributions of $T^{\mu\nu\sigma}_{n}[f^1_{\leq 4j/5,k_1},f^2_{\leq 4j/5,k_2},f^3_{\leq 4j/5,k_3}]$ we use the physical space formula \eqref{nh50} and Lemma \ref{tech0}. Let $\Psi^{\mu\nu\sigma}(\xi,v,\xi_3,x,y_1,y_2,y_3)$ denote the phase in the integral defining the kernels $R^{\mu\nu\sigma}$, i.e.
\begin{equation}\label{nh71}
\begin{split}
\Psi^{\mu\nu\sigma}(\xi,v,\xi_3,x,y_1,y_2,y_3):&=(x-y_1)\cdot\xi+(y_1-y_2)\cdot v+(y_2-y_3)\cdot \xi_3\\
&+s[\Lambda(\xi)-\mu\Lambda(\xi-v)-\nu\Lambda(v-\xi_3)-\sigma\Lambda(\xi_3)].
\end{split}
\end{equation}
Recalling \eqref{nh66}, it follows from \eqref{la3} that if $|v|\in[2^{n-1},2^{n+1}]$ then
\begin{equation*}
\begin{split}
&\Big|\nabla_\xi\big[s(\Lambda(\xi)-\Lambda(\xi-v)-\Lambda(v-\xi_3)+\Lambda(\xi_3))\big]\Big|=s\big|\nabla\Lambda(\xi)-\nabla\Lambda(\xi-v)\big|\lesssim 2^{m+n},\\
&\Big|\nabla_{\xi_3}\big[s(\Lambda(\xi)+\Lambda(\xi-v)-\Lambda(v-\xi_3)-\Lambda(\xi_3))\big]\Big|=s\big|\nabla\Lambda(v-\xi_3)-\nabla\Lambda(\xi_3)\big|\gtrsim 2^{m+k_3-5c_+}\gtrsim 2^{9m/10}.
\end{split}
\end{equation*}
Using \eqref{nh41.5} and integrating by parts only in $\xi$ in the formula \eqref{nh40},
\begin{equation*}
\text{ if }|x-y_1|\geq 2^{j-10}\text{ then }|R_n^{++-}(x;y_1,y_2,y_3)|\lesssim 2^{-100j}.
\end{equation*}
Similarly, using \eqref{nh41.5} and integrating by parts only in $\xi_3$ in the formula \eqref{nh40},
\begin{equation*}
\text{ if }|y_2-y_3|\leq 2^{4j/5+10}\text{ then }|R_n^{-++}(x;y_1,y_2,y_3)|\lesssim 2^{-100j},
\end{equation*}
provided that the constant $D$ is sufficiently large. Thus
\begin{equation*}
\sum_{n\leq -4j/5}2^j\big\|\varphi_j\cdot T^{\mu\nu\sigma}_{n}[f^1_{\leq4j/5,k_1},f^2_{\leq4j/5,k_2},f^3_{\leq4j/5,k_3}]\big\|_{L^2}\lesssim 2^{-50j},
\end{equation*}
for $(\mu,\nu,\sigma)\in\{(+,+,-),(-,+,+)\}$. Using also \eqref{nh70} it follows that
\begin{equation}\label{nh75}
\sum_{n\leq -4j/5}2^j\big\|\varphi_j\cdot T^{\mu\nu\sigma}_{n}[P_{k_1}f_1,P_{k_2}f_2,P_{k_3}f_3]\big\|_{L^2}\lesssim 2^{-11m/10},
\end{equation}
as desired.

{\bf{Step 4.}} We consider now the contribution to the left-hand side of \eqref{nh68} coming from $n\geq -4j/5$. Using Lemma \eqref{tech3}, more precisely the bound \eqref{nh85}, together with the bounds \eqref{nh69} and the dispersive estimate \eqref{disper}, it follows that
\begin{equation}\label{nh76}
\begin{split}
&\big\|T^{\mu\nu\sigma}_{n}[P_{k_1}f_1-f^1_{\leq j/2,k_1},P_{k_2}f_2,P_{k_3}f_3]\big\|_{L^2}\lesssim 2^{-j/2}2^{-2m}2^{m/4},\\
&\big\|T^{\mu\nu\sigma}_{n}[f^1_{\leq j/2,k_1},P_{k_2}f_2-f_{\leq j/2,k_2}^2,P_{k_3}f_3]\big\|_{L^2}\lesssim 2^{-j/2}2^{-2m}2^{m/4},\\
&\big\|T^{\mu\nu\sigma}_{n}[f^1_{\leq j/2,k_1},f_{\leq j/2,k_2}^2,P_{k_3}f_3-f_{\leq j,k_3}^3]\big\|_{L^2}\lesssim 2^{-j/2}2^{-2m}2^{m/4}.
\end{split}
\end{equation}

To estimate the contribution of the functions $f_{\leq j/2,k_1}^1,f_{\leq j/2,k_2}^2,f_{\leq j/2,k_3}^3$ we recall the definitions \eqref{nh71} and notice that, in the support of the function $Q(\xi,v,\xi_3)\varphi_n(v)$,
\begin{equation}\label{nh77}
\begin{split}
&\big|\nabla_{\xi,\xi_3,v}\Psi^{++-}(\xi,v,\xi_3,x,y_1,y_2,y_3)\big|\gtrsim 2^{9j/10},\\
&\big|\nabla_{\xi,\xi_3,v}\Psi^{-++}(\xi,v,\xi_3,x,y_1,y_2,y_3)\big|\gtrsim 2^{9j/10},
\end{split}
\end{equation}
provided that \eqref{nh66} holds, $|x|\approx 2^j$, and $|y_1|+|y_2|+|y_3|\lesssim 2^{j/2}$. Indeed, using \eqref{la3}, we notice that
\begin{equation*}
\begin{split}
&\big|\nabla_{\xi}\Psi^{++-}(\xi,v,\xi_3,x,y_1,y_2,y_3)\big|\gtrsim 2^{j}\qquad\text{ unless }2^j\lesssim s|v|,\\
&\big|\nabla_{\xi_3}\Psi^{++-}(\xi,v,\xi_3,x,y_1,y_2,y_3)\big|\gtrsim 2^{m+n}2^{-10c_+}\qquad\text{ if }2^j\lesssim s|v|.
\end{split}
\end{equation*}
The bound in the first line of \eqref{nh77} follows. In addition, we have
\begin{equation*}
\begin{split}
&\text{ if }\big|\nabla_{\xi}\Psi^{-++}(\xi,v,\xi_3,x,y_1,y_2,y_3)\big|\leq 2^{9j/10}\qquad\text{ then }2^{j}\lesssim s|v-2\xi|,\\
&\text{ if }\big|\nabla_{\xi_3}\Psi^{-++}(\xi,v,\xi_3,x,y_1,y_2,y_3)\big|\leq 2^{9j/10}\qquad\text{ then }s|v-2\xi_3|\lesssim 2^{9j/10}2^{10c_+},\\
&\text{ if }\big|\nabla_{v}\Psi^{-++}(\xi,v,\xi_3,x,y_1,y_2,y_3)\big|\leq 2^{9j/10}\qquad\text{ then }s|\xi-\xi_3|\lesssim 2^{9j/10}2^{10c_+}.
\end{split}
\end{equation*}
The inequality in the second line of \eqref{nh77} follows.

Given \eqref{nh77}, we use \eqref{nh41.5} as before to conclude that if $n\geq -4j/5$ then
\begin{equation}\label{nh78}
2^j\big\|\varphi_j\cdot T^{\mu\nu\sigma}_{n}[f^1_{\leq j/2,k_1},f^2_{\leq j/2,k_2},f^3_{\leq j/2,k_3}]\big\|_{L^2}\lesssim 2^{-50j}.
\end{equation}
The desired bound \eqref{nh68} follows from \eqref{nh75}, \eqref{nh76}, and \eqref{nh78}. This completes the proof of the lemma.
\end{proof}

\begin{lemma}\label{LastTri}
Assume that $k\in\mathbb{Z}$, $k_1,k_2,k_3\in[-K,K/10]\cap\mathbb{Z}$ for some $K\geq 1$, $\widetilde{a}=\mathrm{med}(k_1,k_2,k_3)$, $\widetilde{c}=\max(k_1,k_2,k_3)$, $\rho\in[2^{-c_+},1]$, and $p_1,p_2,p_3\in\{2,\infty\}$, $1/p_1+1/p_2+1/p_3=1/2$. Then
\begin{equation}\label{nh90}
\begin{split}
\Big\|\int_{\mathbb{R}^2\times\mathbb{R}^2}&\frac{P_{k,k_1,k_2,k_3}^{\mu\nu\sigma}(\xi,\eta,\chi)}{\Psi(\xi,\eta,\chi)}\widehat{f_1}(\xi-\eta)\widehat{f_2}(\eta-\chi)\widehat{f_3}(\chi)\,d\eta d\chi\Big\|_{L^2_\xi}\\
&\lesssim \rho^{-6}2^{6(\widetilde{a}_++\widetilde{c}_+)}\big[2^{-10K}\Vert f_1\Vert_{L^2}\Vert f_2\Vert_{L^2}\Vert f_3\Vert_{L^2}+K\|f_1\|_{L^{p_1}}\|f_2\|_{L^{p_2}}\|f_3\|_{L^{p_3}}\big],
\end{split}
\end{equation}
where $P_{k,k_1,k_2,k_3}^{\mu\nu\sigma}$ is as in \eqref{lo31}, and the smooth function $\Psi:\mathbb{R}^2\times\mathbb{R}^2\times\mathbb{R}^2$ satisfies the inequalities
\begin{equation}\label{nh91}
\inf_{\xi,\eta,\chi\in\mathbb{R}^2}|\Psi(\xi,\eta,\chi)|\geq \rho,\qquad\sup_{|\alpha|\in[1,20]}\sup_{\xi,\eta,\chi\in\mathbb{R}^2}|D^\alpha_{\xi,\eta,\chi}\Psi(\xi,\eta,\chi)|\lesssim 1.
\end{equation}
\end{lemma}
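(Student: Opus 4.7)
My plan is to bypass the main structural difficulty---that dividing $P_{k,k_1,k_2,k_3}^{\mu\nu\sigma}$ by $\Psi(\xi,\eta,\chi)$ destroys the bilinear-in-bilinear factorization $P=m^0(\xi,\eta)m^1(\eta,\chi)$ that was central to Lemma~\ref{tech3}, because $1/\Psi$ couples all three frequency variables---by working in physical space and bounding the resulting kernel in $L^1$. Crucially, the coupling introduced by $1/\Psi$ is \emph{smooth}, since the hypothesis $|\Psi|\geq\rho$ together with $\sup_{|\alpha|\in[1,20]}|D^\alpha\Psi|\lesssim 1$ yields, by the chain rule, pointwise bounds $|D^\alpha(1/\Psi)|\lesssim \rho^{-|\alpha|-1}$.

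After the change of variables $\zeta_1=\xi-\eta$, $\zeta_2=\eta-\chi$, $\zeta_3=\chi$, the operator takes the convolution form
\begin{equation*}
T[f_1,f_2,f_3](x)=\int_{\mathbb{R}^6}K(x-y_1,x-y_2,x-y_3)\,f_1(y_1)f_2(y_2)f_3(y_3)\,dy_1\,dy_2\,dy_3,
\end{equation*}
where $K:=\mathcal{F}^{-1}\widetilde{M}$ on $\mathbb{R}^6$ and $\widetilde{M}$ is the multiplier $P_{k,k_1,k_2,k_3}^{\mu\nu\sigma}/\Psi$ expressed in the new variables, supported in a set of volume $V\lesssim 2^{2(k_1+k_2+k_3)}$. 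Since $p_1,p_2,p_3\in\{2,\infty\}$ with exactly one index equal to $2$, Young's convolution inequality gives
\begin{equation*}
\|T[f_1,f_2,f_3]\|_{L^2_x}\lesssim \|K\|_{L^1(\mathbb{R}^6)}\|f_1\|_{L^{p_1}}\|f_2\|_{L^{p_2}}\|f_3\|_{L^{p_3}},
\end{equation*}
reducing the problem to estimating $\|K\|_{L^1(\mathbb{R}^6)}$.

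To bound the kernel $L^1$-norm, I combine the pointwise bounds $|D^\alpha_\zeta\widetilde{M}|\lesssim 2^{C(\widetilde{a}_++\widetilde{c}_+)}\rho^{-|\alpha|-1}$ (following from the chain rule together with the standard bounds on $P_{k,k_1,k_2,k_3}^{\mu\nu\sigma}$ and its derivatives) with $N$-fold integration by parts in $\zeta$. This yields the pointwise kernel estimate $|K(a)|\lesssim 2^{C(\widetilde{a}_++\widetilde{c}_+)}\,\rho^{-1}(1+\rho|a|)^{-N}$, after exploiting the small measure of $\{|\Psi|\lesssim\rho\}$ via the co-area formula to avoid the naive loss from $\|\widetilde{M}\|_{L^\infty}\cdot V$. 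Integration over $\mathbb{R}^6$, where the dominant contribution comes from $|a|\lesssim 1/\rho$ (volume $\rho^{-6}$), then gives $\|K\|_{L^1}\lesssim \rho^{-6}\,2^{6(\widetilde{a}_++\widetilde{c}_+)}$. The logarithmic factor $K$ in the final bound arises from a Littlewood--Paley decomposition of an intermediate frequency variable $v$ (mirroring the sum over $n$ in Lemma~\ref{tech3}), and the additional $2^{-10K}\|f_1\|_{L^2}\|f_2\|_{L^2}\|f_3\|_{L^2}$ term is obtained by restricting the same sum to very low frequencies $|v|\lesssim 2^{-K}$ and using a crude $L^2\times L^2\times L^2\to L^2$ bound, which gains powers of $2^{-K}$ from the integration volume in $v$.

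The main obstacle is pinning down the sharp $\rho^{-6}$ exponent: the naive estimate $\|K\|_{L^1}\lesssim \|\widetilde{M}\|_{L^\infty}\cdot V\cdot \rho^{-6}$ loses a factor of $\rho$, so one must argue more carefully (either via the co-area formula to improve $\|\widetilde{M}\|_{L^1}$ beyond $\rho^{-1}V$, or via interpolation between the $L^\infty$ and $L^1$ norms and the size of the support in the level sets of $\Psi$) to recover the right exponent.
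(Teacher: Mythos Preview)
Your overall structure matches the paper's: Littlewood--Paley decompose in the intermediate variable $v=\eta$ (after rewriting in $(\xi,v,\xi_3)$-coordinates), handle $|v|\lesssim 2^{-10K}$ by the crude $L^2\times L^2\times L^2$ bound to produce the $2^{-10K}$ term, and for the remaining $O(K)$ values of $n$ pass to physical space and control the kernel.

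Where you go wrong is in the kernel estimate itself. The suggestion to ``exploit the small measure of $\{|\Psi|\lesssim\rho\}$ via the co-area formula'' is confused: by hypothesis $|\Psi|\geq\rho$ everywhere, so that set is \emph{empty} and the co-area formula buys nothing here. Your pointwise bound $|K(a)|\lesssim\rho^{-1}(1+\rho|a|)^{-N}$, integrated over $\mathbb{R}^6$, yields $\rho^{-7}$, not $\rho^{-6}$: the ball $\{|a|\lesssim\rho^{-1}\}$ has volume $\rho^{-6}$, and the height is $\rho^{-1}$. You correctly flag this in your last paragraph, but the proposed fixes do not work.

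The paper sidesteps this by proving a \emph{product} kernel bound
\[
|R_n(x;y_1,y_2,y_3)|\lesssim|K_1(x-y_1)|\,|K_2(y_1-y_2)|\,|K_3(y_2-y_3)|
\]
in the parametrization where the phase is $(x-y_1)\cdot\xi+(y_1-y_2)\cdot v+(y_2-y_3)\cdot\xi_3$, via integration by parts separately in $\xi$, $v$, $\xi_3$. Each $K_j$ is a two-dimensional kernel with $\|K_j\|_{L^1(\mathbb{R}^2)}\lesssim\rho^{-2}2^{2(\widetilde{a}_++\widetilde{c}_+)}$, and the product gives $\rho^{-6}$ directly; the iterated Young inequality then handles all choices of $(p_1,p_2,p_3)$. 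Your $(\zeta_1,\zeta_2,\zeta_3)$-parametrization would serve equally well for this; the point is to organize the integration by parts so that each two derivatives (costing $\rho^{-2}$) are paired with one two-dimensional $L^1$-integration, rather than lumping everything into a single six-dimensional kernel. Note also that since the hypothesis forces $\rho\geq 2^{-\widetilde{c}_+}$, one stray factor of $\rho^{-1}$ is bounded by $2^{\widetilde{c}_+}$ and could be absorbed into the prefactor---so your $\rho^{-7}$ would in fact suffice for the application in Lemma~\ref{ble2}, just not for the lemma exactly as stated.
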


\begin{proof}[Proof of Lemma \ref{LastTri}] We decompose the integral in the left-hand side of \eqref{nh90} as in the proof of Lemma \ref{KerLem}, see \eqref{CanForm}. More precisely, with $Q$ defined as in \eqref{nh47.5}, it suffices to prove that
\begin{equation}\label{nh92}
\begin{split}
\Big\|\int_{\mathbb{R}^2\times\mathbb{R}^2}\frac{Q(\xi,v,\xi_3)}{\tilde{\Psi}(\xi,v,\xi_3)}&\widehat{f_1}(\xi-v)\widehat{f_2}(v-\xi_3)\widehat{f_3}(\xi_3)\,dv d\xi_3\Big\|_{L^2_\xi}\\
&\lesssim \rho^{-6}2^{6(\widetilde{a}_++\widetilde{c}_+)}\big[2^{-10K}\Vert f_1\Vert_{L^2}\Vert f_2\Vert_{L^2}\Vert f_3\Vert_{L^2}+K\|f_1\|_{L^{p_1}}\|f_2\|_{L^{p_2}}\|f_3\|_{L^{p_3}}\big],
\end{split}
\end{equation}
where $\widetilde{\Psi}$ satisfies the bounds
\begin{equation}\label{nh93}
\inf_{\xi,v,\xi_3\in\mathbb{R}^2}|\tilde{\Psi}(\xi,v,\xi_3)|\gtrsim \rho,\qquad\sup_{|\alpha|\in[1,20]}\sup_{\xi,v,\xi_3\in\mathbb{R}^2}|D^\alpha_{\xi,v,\xi_3}\tilde\Psi(\xi,v,\xi_3)|\lesssim 1.
\end{equation}
As in the proof of Lemma \ref{KerLem} we insert cutoff functions in $v$. Estimating the $L^2$ norms in the Fourier space and recalling the bound $|Q(\xi,v,\xi_3)|\lesssim 2^{3c_+}$, it follows that
\begin{equation}\label{nh94}
\begin{split}
\sum_{n\leq -10K}\Big\|\int_{\mathbb{R}^2\times\mathbb{R}^2}\frac{Q(\xi,v,\xi_3)}{\tilde{\Psi}(\xi,v,\xi_3)}\varphi_n(v)\widehat{f_1}(\xi-v)\widehat{f_2}(v-\xi_3)\widehat{f_3}(\xi_3)\,dv d\xi_3\Big\|_{L^2_\xi}\lesssim \sum_{n\leq-10K}\rho^{-1}2^{3c_+}2^{2n}\\
\lesssim 2^{3c_+}2^{-10K}.
\end{split}
\end{equation}

To estimate the remaining sum we pass to the physical space and estimate
\begin{equation}\label{nh95}
\begin{split}
&\Big\|\int_{\mathbb{R}^2\times\mathbb{R}^2}\frac{Q(\xi,v,\xi_3)}{\tilde{\Psi}(\xi,v,\xi_3)}\varphi_n(v)\widehat{f_1}(\xi-v)\widehat{f_2}(v-\xi_3)\widehat{f_3}(\xi_3)\,dv d\xi_3\Big\|_{L^2_\xi}\\
&\lesssim\sup_{\|g\|_{L^2}\leq 1}\Big|\int_{\mathbb{R}^2\times\mathbb{R}^2\times\mathbb{R}^2\times\mathbb{R}^2}g(x)f_1(y_1)f_2(y_2)f_3(y_3)R_{n}(x;y_1,y_2,y_3)\,dxdy_1dy_2dy_3\Big|,
\end{split}
\end{equation}
where
\begin{equation*}
R_{n}(x;y_1,y_2,y_3):=\int_{\mathbb{R}^2\times\mathbb{R}^2\times\mathbb{R}^2}e^{i(x-y_1)\cdot\xi}e^{i(y_1-y_2)\cdot v}e^{i(y_2-y_3)\cdot \xi_3}\varphi_n(v)\frac{Q(\xi,v,\xi_3)}{\tilde{\Psi}(\xi,v,\xi_3)}\,d\xi dv d\xi_3.
\end{equation*}
By integration by parts it is easy to see that
\begin{equation*}
|R_{n}(x;y_1,y_2,y_3)|\lesssim |K_1(x-y_1)|\cdot |K_2(y_1-y_2)|\cdot |K_3(y_2-y_3)|
\end{equation*}
for some functions $K_1,K_2,K_3:\mathbb{R}^2\to[0,\infty)$ satisfying
\begin{equation*}
\|K_1\|_{L^1}\cdot\|K_2\|_{L^1}\cdot \|K_3\|_{L^1}\lesssim \rho^{-6}2^{6(\widetilde{a}_++\widetilde{c}_+)}.
\end{equation*}
Using \eqref{nh95}, it follows that
\begin{equation}\label{nh96}
\Big\|\int_{\mathbb{R}^2\times\mathbb{R}^2}\frac{Q(\xi,v,\xi_3)}{\tilde{\Psi}(\xi,v,\xi_3)}\varphi_n(v)\widehat{f_1}(\xi-v)\widehat{f_2}(v-\xi_3)\widehat{f_3}(\xi_3)\,dv d\xi_3\Big\|_{L^2_\xi}\lesssim \rho^{-6}2^{6(\widetilde{a}_++\widetilde{c}_+)}\|f_1\|_{L^{p_1}}\|f_2\|_{L^{p_2}}\|f_3\|_{L^{p_3}}.
\end{equation}
The desired bound \eqref{nh92} follows from \eqref{nh94} and \eqref{nh96}.
\end{proof}

\end{document}